\newtheorem{thm}{Theorem}[section]
\newtheorem{prop}[thm]{Proposition}
\newtheorem{lem}[thm]{Lemma}
\theoremstyle{remark}
\newtheorem{rem}[thm]{Remark}
\renewcommand{\le}{\leqslant}
\renewcommand{\ge}{\geqslant}
\renewcommand{\subset}{\subseteq}
\newcommand{\mcl}{\mathcal}
\newcommand{\msf}{\mathsf}
\newcommand{\B}{\mathbb{B}}
\newcommand{\e}{\mathbf{e}}
\newcommand{\E}{\mathbb{E}}
\renewcommand{\L}{\mathcal{L}}
\newcommand{\N}{\mathbb{N}}
\newcommand{\na}{\nabla}
\newcommand{\Ll}{\left}
\newcommand{\Rr}{\right}
\newcommand{\1}{\mathbf{1}}
\newcommand{\R}{\mathbb{R}}
\newcommand{\Z}{\mathbb{Z}}
\newcommand{\z}{^{\circ}}
\renewcommand{\P}{\mathbb{P}}
\newcommand{\p}{^{(p)}}
\newcommand{\pp}{^{(\ov{p})}}
\newcommand{\ppp}{^{(\ov{p}+p)}}
\newcommand{\ppvp}{^{(\ov{p},p)}}
\newcommand{\ppep}{^{(\ov{p}, e^+)}}
\newcommand{\ppem}{^{(\ov{p}, e^-)}}
\newcommand{\ov}{\overline}
\newcommand{\un}{\underline}
\newcommand{\td}{\tilde}
\newcommand{\eps}{\varepsilon}
\renewcommand{\d}{{\mathrm{d}}}
\newcommand{\Ah}{A_\mathrm{hom}}
\newcommand{\dr}{\partial}
\renewcommand{\o}{\omega}
\newcommand{\on}{[\un{\o}]_n}
\renewcommand{\O}{\Omega}
\newcommand{\tp}{^{(\td{p})}}
\newcommand{\C}{\mathsf{C}}
\DeclareMathOperator*{\osc}{Osc}
\DeclareMathOperator{\arsinh}{arsinh}
\numberwithin{equation}{section}
\title[Expansion of homogenized coefficients]{First-order expansion of homogenized coefficients under Bernoulli perturbations}
\author{Jean-Christophe Mourrat}
\address{EPFL, institut de mathématiques, station 8, 1015 Lausanne, Switzerland}
\begin{document}

\begin{abstract}

Divergence-form operators with stationary random coefficients homogenize over large scales. We investigate the effect of certain perturbations of the medium on the homogenized coefficients. The perturbations considered are rare at the local level, but when occurring, have an effect of the same order of magnitude as the initial medium itself. The main result of the paper is a first-order expansion of the homogenized coefficients, as a function of the perturbation parameter. 

\bigskip

\noindent \textsc{MSC 2010:} 35B27, 35J15, 35R60, 82D30.

\medskip

\noindent \textsc{Keywords:} homogenization, random media, Clausius-Mossotti formula, Bernoulli perturbation.

\end{abstract}
\maketitle
%
%
%
%
%
%
%
%
\section{Introduction}
\label{s:intro}

Consider a divergence-form operator whose coefficients are random. The randomness of the coefficients model the small-scale irregularity of a medium. If the distribution of the coefficients is stationary and ergodic (and if some ellipticity condition holds), then this random operator can, over large scales, be replaced by an effective operator with constant \emph{homogenized} coefficients. 


The aim of this paper is to study the effect of certain perturbations of the medium on the homogenized coefficients. The perturbations considered are small only in the sense that locally, the medium is perturbed with small probability; but where a perturbation occurs, the change is of the same order of magnitude as the medium itself. This type of perturbation may be called a \emph{Bernoulli perturbation}. Our main purpose is to prove a first-order expansion of the homogenized coefficients, as a function of the perturbation parameter, under conditions of short-range correlations and uniform ellipticity of the medium.

\medskip 

There are at least two important motivations behind this problem. The first concerns the numerical approximation of the homogenized coefficients. Although several techniques for doing so have been identified and analysed \cite{cemracs}, it remains a computationally expensive task, even in low dimensions. It has thus been proposed in \cite{analeb1} to study more efficient techniques that would apply to \emph{weakly random} media, that is, random perturbations of a periodic environment. First-order expansions (as a function of the perturbation parameter) have been proved in \cite{analeb3}, but only for specific types of perturbations that do not include Bernoulli perturbations. Yet, Bernoulli perturbations are arguably the most natural modelling assumption for typical disordered media like composite materials (as an example, see \cite{analeb2} for a cross-section of a composite material used in the aeronautics industry). In \cite{analeb2,analeb3}, conjectures are formulated concerning the expansion of the homogenized coefficients for such perturbations, which are backed by a formal derivation and numerical evidence. 

A second motivation is related to percolation. For this model, each edge of $\Z^d$ is independently removed with probability $p$, and kept otherwise. There exists a critical probability $p_c \in (0,1)$ such that the remaining graph has a unique infinite connected component if $p < p_c$, and has only finite connected components if $p > p_c$. Over the last years, the understanding of two-dimensional percolation close to criticality progressed tremendously, in particular through the rigorous derivation of the values of several critical exponents (see for instance \cite{smwe}). Yet, to the best of my knowledge, it is not known (for any $d \ge 2$) how the homogenized conductivity of the percolation cluster behaves as $p$ approaches $p_c$ from below (see \cite{hug} for a nice review of the problem). Understanding the effect of small perturbations of the medium on the homogenized conductivity seems to be a first necessary step towards the resolution of this problem.

\medskip 

Of the two motivations above, the first is formulated for differential operators, while the second is inherently discrete. In what follows, we focus on a discrete model. Apart from this difference, the results presented here give a proof of the first-order expansion of the homogenized coefficients conjectured in \cite{analeb1, analeb2, analeb3}. It extends it in the sense that the non-perturbed environment considered here need not be periodic, and that the expansion is obtained around every value of the perturbation parameter. 
We work under a condition of uniform ellipticity, which is obviously not satisfied in the case of percolation. The present paper will hopefully lay the basis for an extension to the case of percolation, as well as to higher-order expansions.

\medskip

On a heuristic level, the first-order expansion of the homogenized coefficients can be guessed as follows. The homogenized coefficients can be expressed in terms of the \emph{corrector}, whose defining equation is posed on the whole space. It is well-known that finite-volume approximations of the corrector yield consistent approximations as the volume tends to infinity. One can easily derive a first-order expansion of the corrector defined over a given finite volume element (or any reasonable function of it) as the perturbation parameter tends to $0$. Indeed, in this limit, one can assume at first order that there is at most one location that is perturbed. One then gets a formula for the first-order expansion of the homogenized coefficients by formally interchanging the ``infinite-volume'' and the ``small perturbation parameter'' limits.

We will see here that this informal reasoning can be made rigorous, and thus provide us with a first-order expansion of the homogenized coefficients. The main point is to quantify errors when localizing the problem over a finite volume, say of side length~$N$, and then choose $N$ as a function of the perturbation parameter $p$. It is clear that, for this strategy to make sense at all, we need the box to contain some perturbed locations, so we should have $N^d \gg p^{-1}$. A more refined heuristic consists in observing that a random walk evolving in a box of size $N$ sees only of order $N^2$ sites, so that we should in fact need $N^2 \gg p^{-1}$. Using parabolic/elliptic regularity theory, the averaged estimates on the gradients of the Green function of \cite{deldeu}, and the localization error estimates on the corrector due to \cite{glotto, glotto2}, we will see that the argument can be made rigorous if we choose $N^2 = p^{-(1+\eps)}$ for some small $\eps > 0$. (One should think of $N$ as being $\mu^{-1/2}$ in the notation introduced below.) 

Of these three main ingredients, only the estimates of \cite{glotto,glotto2} require the assumption on the short range of correlations. At bottom, what is required for these estimates to hold is a spectral gap inequality for the Glauber dynamics, which makes so-called ``vertical derivatives'' come in (see \cite{gno} and Remark~\ref{r:corr} below for more on this). 

In order to extend the present results to the case of continuous operators, the only missing main ingredient is an analogue of the results of \cite{glotto, glotto2}. Results in this direction have been announced by the authors. Concerning the extension to percolation, it is noteworthy that a probabilistic version of the Harnack inequality was proved in \cite{barlow}, while part of the estimates of \cite{deldeu} have been extended to percolation clusters in \cite{benjdum}. 

In our discrete, uniformly elliptic setting, stronger results on the Green function and its gradients have been obtained very recently in \cite{marott}. We will see in the last section how to use them to get a sharp control of the error term in the expansion.

\medskip

Let us now give a brief survey of related works, outside of the previously mentioned \cite{analeb1,analeb2,analeb3}.

In some cases, asymptotic formulas for the homogenized coefficients in the regime of high dilution of the perturbations have been known and used by physicists since the 19th century, and are variously known as the Clausius-Mossotti, Lorentz-Lorenz, Maxwell, or Rayleigh formulas. The setting is that of a continuous and homogeneous medium with highly diluted spherical inclusions. In this context, the general expression for the first-order correction that will be obtained here in terms of correctors becomes an explicit formula \cite{maxwell,pap}. Vanishingly small spherical inclusions arranged periodically along a fixed square or cubic lattice within a homogeneous medium have been investigated with precision in \cite{ray}, where a high-order expansion is obtained. Surprisingly, in dimension 3, the expansion involves fractional powers of the volume fraction $p$ of the small spherical inclusions, the smallest fractional power with a non-zero coefficient being $p^{4+1/3}$ (see \cite[(60) and (64)]{ray}). In particular, the homogenized coefficients can be differentiated four times with respect to $p$, but no more. Note that in \cite{ray}, every periodic cell is perturbed by the insertion of a small inclusion, while our present approach would cover the different situation where each periodic cell is perturbed by some fixed inclusion, with small probability. This difference should be irrelevant as far as the first order of the expansion is concerned, but not so for higher orders. In our context of random perturbations, it is an open question whether the homogenized coefficients are infinitely differentiable functions of the perturbation parameter. 

Clausius-Mossotti formulas have been the subject of an impressive amount of work in the physics literature, of which we simply quote \cite{zuzbre}, where results of \cite{ray} are extended to higher-order corrections. In the mathematics literature, the most important result to date on this problem (that I know of) is certainly \cite{alm}. Under some conditions, the Clausius-Mossotti formula is proved (although the results obtained there do not directly relate to the homogenized coefficients), with an error bound of order $p^{3/2}$, where $p$ is the volume fraction of the perturbation. In our context, we will see that the first-order expansion of the homogenized coefficients holds with an error term of order $o(p^{2-\eta})$, for every $\eta > 0$. Earlier investigations include \cite{koz,bermit}.

The problematic considered in \cite{bll} is similar in spirit to the present one. In this work, a different type of perturbation, based on random deformations of the geometry of the medium, has been investigated, and a first-order expansion has been obtained, see \cite[Theorem~3.2]{bll}.

The problem of showing the regularity of diffusion coefficients has also been explored for others models, and in particular for that of a tagged particle in the simple symmetric exclusion process. It is proved in \cite{lov} that the effective diffusivity for this model is an infinitely differentiable function of the density of particles (see also \cite{ber,lov2,bel,sue,nag1,nag2,nag3} for generalizations). The approach followed there relies on a particular duality structure of the process. In our present context, this duality has been investigated in \cite{komoll,cudkom} under the additional assumption that the random coefficients take only two possible values (although no regularity result on the diffusion coefficients was given there). In contrast, the approach of the present paper does not involve any duality structure, and (as a consequence) is not restricted to random coefficients taking only a finite number of possible values.


\medskip

In the next section, we introduce some definitions and notations, and then explain in more detail the approach taken up and the organization of the rest of the paper.

%
%
%
%
%
%
%
%
\section{Definitions and notations}
\label{s:defs}

\subsection{A reminder on homogenization}
We view $\Z^d$ ($d \ge 2$) as a graph, with its usual nearest-neighbour structure, and write $\B$ for the set of (non-oriented) edges. We write $x\sim y$ if $x$ and $y \in \Z^d$ are neighbours.
Let $(\omega_e)_{e \in \B}$ be i.i.d.\ random variables such that almost surely, 
$$
c_- \le \omega_e \le c_+ ,
$$ 
where $0 < c_- < c_+ < +\infty$ are constants, henceforth called the \emph{ellipticity constants}. We call $\omega = (\omega_e)_{e \in \B}$ an \emph{environment}, and write $\Omega = [c_-,c_+]^{\B}$ for the set of environments. We call $\omega_e$ the \emph{conductance} of the edge $e$. We write $\P$ for the law of~$\omega$, and $\E$ for the associated expectation. Note that $\Z^d$ acts on $\Omega$ by the translations $(\theta_z)_{z \in \Z^d}$ defined by $(\theta_z \ \omega)_{x,y} = \omega_{x+z,y+z}$.

Let $(\e_1, \ldots, \e_d)$ be the canonical basis of $\R^d$. For every $x \in \Z^d$, let $A(x,\omega) = A^\o(x)$ be the $d$-dimensional diagonal matrix
$\textrm{diag}(\omega_{x,x+\e_1}, \ldots,\omega_{x,x+\e_d})$. We may keep the variable $\omega$ implicit in the notation. The operator whose homogenization properties are investigated here is $-\na^* \cdot A \na$, which acts on functions $f : \Z^d \to \R$ by
$$
-\na^* \cdot A \na f (x) = \sum_{y \sim x} \omega_{x,y} (f(y)-f(x)) \qquad (x \in \Z^d).
$$
Here and below, we write $\na f$ to denote the forward gradient,
$$
\nabla f(x)=\left[  
\begin{array}{c}
f(x+\e_1)-f(x) \\
\vdots\\
f(x+\e_d)-f(x)
\end{array}
\right],
$$
and for $F = (F_1,\ldots,F_d): \Z^d \to \R^d$, we write $\na^* \cdot F$ for the backward divergence, 
$$
\na^* \cdot F(x) = \sum_{i = 1}^d \Ll(F_i(x) - F_i(x-\e_i)\Rr).
$$

Homogenization refers to the fact that there exists a \emph{constant} symmetric matrix $\Ah$ such that the solution operator of $-\na^* \cdot A(\cdot/\eps) \na$ converges, as $\eps \to 0$, to the solution operator of the differential operator $-\na \cdot \Ah \na$. Moreover, the homogenized matrix $\Ah$ can be characterized in terms of a function called the corrector, which we now proceed to define. 

We say that a function $\psi : \Z^d \times \Omega \to \R$ is \emph{stationary} if $\psi(x,\omega) = \psi(0,\theta_x \ \omega)$. Let $\xi \in \R^d$ be a vector which will be kept fixed throughout this paper.  The \emph{corrector} $\phi$ in the direction $\xi$ is the unique function: $\Z^d \times \Omega \to \R$ such that $\na \phi$ is stationary, $\E[\na \phi] = 0$, $\phi(0) = 0$ and 
\begin{equation}
\label{defphip}
-\na^* \cdot A(\xi + \na \phi)(x,\omega) = 0 \qquad (x \in \Z^d,\  \P\text{-a.e. } \omega).
\end{equation}
There are several ways to define the homogenized matrix in terms of the corrector, but the most useful one for our present purpose is the property that
\begin{equation}
\label{defAh}
\xi \cdot \Ah \xi = \E[\xi \cdot A(\xi + \na \phi)].
\end{equation}
In the right-hand side above, we keep implicit the fact that the quantity under the expectation is evaluated at $(0,\omega)$. 
%
Proofs of existence and uniqueness of $\phi$ as stated above, and of the formula \eqref{defAh}, can be found for instance in \cite[Theorem 3 and (3.17)]{kuen}, 
or in the continuous setting, in \cite[Theorem~2]{papvar}.

\subsection{Bernoulli perturbations}
We now introduce Bernoulli perturbations. We give ourselves a second family $({\o}^{(1)}_e)_{e \in \B}$ of i.i.d.\ random variables such that almost surely,
$$
c_- \le {\o}^{(1)}_e \le c_+.
$$
For every $p \in [0,1]$, we also give ourselves a family $(b\p_e)_{e \in \B}$ of independent Bernoulli random variables of parameter $p$, independent of $(\omega_e,{\o}^{(1)}_e)_{e \in \B}$. We now understand that $\P$ denotes the joint law of $\un{\o} := (\omega_e,{\o}^{(1)}_e,(b\p_e)_{p \in [0,1]})_{e \in \B}$, so that for instance, we have $p = \P[b_e\p = 1]$. We write $\un{\Omega}$ for the set of possible values taken by~$\un{\o}$. The group $\Z^d$ acts on $\un{\Omega}$ by translations, and we keep writing $(\theta_x)_{x \in \Z^d}$ to denote this action. 

We let $\omega\p_e = (1-b\p_e)\ \omega_e + b\p_e \ {\o}^{(1)}_e$. Note that the notation is consistent when $p = 1$, and that if we denote the law of $\o\p_e$ by $\nu\p$, then $\nu\p = (1-p) \nu^{(0)} + p \nu^{(1)}$. With each $p \in [0,1]$ is thus associated the perturbed environment $\omega\p = (\omega\p_e)_{e \in \B}$. The environment $\omega\p$ shares the same properties as the environment $\omega$, and thus we can define $A\p, \Ah\p$ and $\phi\p$ in the same way as $A, \Ah$, and $\phi$ respectively, but with $\omega$ replaced by $\omega\p$. Throughout this article, for notational convenience, we will replace the exponent $^{(0)}$ by simply $^\circ$, so that for instance, $A\z = A^{(0)}$, $\Ah\z = \Ah^{(0)}$, $\phi\z = \phi^{(0)}$, and so on (we think of $A\z$, $\phi\z$, etc.\ as functions of $\un{\o}$, which makes them formally different from $A$, $\phi$, etc.\ introduced before). 

\medskip

Our main goal is to show that the homogenized matrix $\Ah\p$, as a function of~$p$, is differentiable, and to find an explicit formula for the derivative.
Heuristically, one may expect that a linear approximation in \eqref{defAh} gives the correct first-order approximation, that is, as $p$ tends to $0$,
\begin{equation}
\label{e:linearapprox}
\xi \cdot \Ah\p \xi = \xi \cdot \Ah\z \xi + p \sum_{e \in \B} \E[\xi \cdot A^e(\xi + \na \phi^e) - \xi \cdot A\z(\xi + \na \phi\z)] + o(p),
\end{equation}
where $A^e$ and $\phi^e$ are for the environment perturbed at the edge $e$ what $A$ and $\phi$ are for the unperturbed environment. In the expectation above and throughout this paper, it is kept implicit that the functions are evaluated at $(0,\un{\o})$. Our aim is to justify this heuristic. Note that the meaning of the sum in \eqref{e:linearapprox} is not clear, since it is not absolutely summable. 

\medskip

\noindent \textbf{Organization of the paper.} In section~\ref{s:green}, we recall classical results on the decay at infinity of the Green function and its gradients. A difficulty with the formula \eqref{defAh} is that the corrector is a very non-local function. It is convenient to introduce a localized version of the corrector, obtained by adding a zero-order term in equation~\eqref{defphip}. In section~\ref{s:solve}, we give a simple criterion for the existence and uniqueness of solutions of elliptic equations with a zero-order term. In section~\ref{s:loc}, we recall results quantifying the accuracy of the approximation by this localized corrector. Although the (localized) corrector depends non-linearly on the values taken by the environment~$\o\p$, we will show in section~\ref{s:linear} that it is, at first order, close to its linear approximation in the limit of small~$p$. After having derived several useful convergence results related to stationary elliptic equations in section~\ref{s:station}, we state and prove Theorem~\ref{t:main0} in section~\ref{s:expansion0}, which is a rigorous version of \eqref{e:linearapprox}. We show in section~\ref{s:periodiz} that localization by a zero-order term in \eqref{defphip} can be replaced by periodization of the medium, thus providing us with alternative descriptions of the derivative of $\Ah\p$ at $p = 0$. The goal of the last two sections is to generalize Theorem~\ref{t:main0} in two directions. In section~\ref{s:expansion}, we give the asymptotic expansion of $\Ah\ppp$ as $p$ tends to $0$, for every $\ov{p} \in [0,1]$. Finally, using the recent results of \cite{marott}, we prove in section~\ref{s:error} that the error term in the expansion is $o(p^{2-\eta})$ for every $\eta > 0$.


\begin{rem}
\label{r:corr}
Although we work throughout under the assumption that the medium is made of i.i.d.\ random variables, the method can be generalized to some (stationary) weakly correlated environments. For this reason, we will not use some properties specific to the i.i.d.\ case (as e.g.\ the fact that the homogenized matrices $\Ah\p$ are multiples of the identity). The bottleneck in generalizing the present approach to more general distributions lies with the results of \cite{glotto,glotto2} recalled in section~\ref{s:loc}. In \cite{gno}, these results were shown to hold as soon as the law of the medium satisfies a spectral gap inequality with respect to the Glauber dynamics. This last condition is more general than the i.i.d.\ assumption, although very far from covering arbitrary ergodic media. The estimates of \cite{glotto,glotto2} are not expected to hold for arbitrary ergodic media however. This can be checked directly in dimension~$1$, or can be derived from the precise quantitative results of \cite{balgar} on homogenization in very correlated environments. (Outside of this remark, recall that we always assume $d \ge 2$ here.)
\end{rem}


\noindent \textbf{Notation.} We define $a \vee b=\max(a,b)$. We write $|\cdot|$ for the $L^2$ norm on $\R^d$. This norm induces an operator norm on $d \times d$ matrices, which we also denote by $|\cdot|$. For a function of several variables, e.g.\ $f : \R \times \Z^d \times \Z^d \to \R$, we write $\na_2 f$ and $\na_3 f$ to denote the (forward) gradient with respect to the second and to the third variable respectively. For $i,j \in \{2,3\}$, we write $\na_i \na_j f$ to denote the $d \times d$ matrix whose columns are 
$
\na_i F_1, \ldots, \na_i F_d,
$
where $[F_1, \ldots, F_d]^{\msf{T}} = \na_j f$. Note that if $v \in \R^d$ is a fixed vector, and $h = \na_j f \cdot v$, then $\na_i h = (\na_i \na_j f) v$. 

The value of a constant denoted by $c$ or $\td{c}$ may change from one occurrence to another, but depends only on the dimension and the ellipticity constants.
%
%
%
%
%
%
%
%
\section{Parabolic and elliptic regularity}
\label{s:green}


The aim of this section is to gather known results on the regularity of the heat kernel and Green function associated with divergence-form operators. We begin by defining the heat kernel. For every fixed $\omega \in \Omega$ and $x \in \Z^d$, let $(q^\o(t,x,y))_{(t,y) \in \R_+ \times \Z^d}$ be the unique bounded function satisfying
$$
\left\{
\begin{array}{l}
\dr_t q^\omega(t,x,y) = -\na^* \cdot A^\o(y) \na q^\omega(t,x,y) \qquad ((t,y) \in \R_+ \times \Z^d), \\
q^\omega(0,x,y) = \1_{y = x} \qquad (y \in \Z^d),
\end{array}
\right.
$$
where we understand $\na^*$ and $\na$ as acting on the $y$ variable. Note that $q^\o(t,x,y) = q^\o(t,y,x)$. Let $q^*(t,x) = q^{\omega^*}(t,0,x)$, where $\omega^*$ is the constant environment such that $\omega^*_e = 1$ for every $e \in \B$. 

\begin{prop}[pointwise control of the heat kernel \cite{stzh,del}]
\label{parabolic}
There exist constants $c, k > 0, \alpha > 0$ such that for every $\omega \in \Omega$, $t \ge 0$, $x \in \Z^d$ and $i \in \{2,3\}$,
\begin{equation}
\label{parabpoint}
q^\omega(t,0,x) \le c \ q^*(kt,x),
\end{equation}
\begin{equation}
\label{parabpointgrad}
\Ll|\na_i q^\omega(t,0,x)\Rr| \le c \  \frac{q^*(k t,x)}{(1 \vee t)^\alpha}.
\end{equation}
\end{prop}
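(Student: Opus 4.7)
The plan is to derive both inequalities from three classical ingredients in the discrete setting developed in \cite{stzh,del}: a Nash inequality adapted to $\Z^d$, the Davies--Aronson exponential-tilt argument that converts on-diagonal decay into Gaussian off-diagonal bounds, and a discrete parabolic Harnack inequality yielding Hölder continuity of caloric functions in space and time. All three are established with constants depending only on $d$ and on $c_-, c_+$, so Proposition~\ref{parabolic} can ultimately be obtained by quoting those works; below I sketch the steps I would actually execute.

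For \eqref{parabpoint}, I would set $u(t,y) := q^\o(t,0,y)$ and first establish the on-diagonal bound $\|u(t,\cdot)\|_\infty \le c (1 \vee t)^{-d/2}$. The energy identity together with uniform ellipticity gives
\[
\dr_t \sum_{y} u(t,y)^2 = -2 \sum_{y} u(t,y)\, \na^* \cdot A^\o \na u(t,y) \le -2 c_- \sum_y |\na u(t,y)|^2,
\]
and the discrete Nash inequality $\|v\|_2^{2+4/d} \le C \|v\|_1^{4/d}\|\na v\|_2^2$, combined with the conservation of $\|u(t,\cdot)\|_1$, produces the claimed decay via a standard ODE argument. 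To upgrade to \eqref{parabpoint}, one conjugates the semigroup by an exponential weight $e^{\lambda \cdot y}$: the conjugated equation carries additional zero-th and first-order terms of size $O(|\lambda|^2)$ for bounded $|\lambda|$, the Nash-type estimate can be repeated on the tilted kernel, and optimizing $\lambda$ as a function of $|x|/t$ yields the Gaussian factor encoded in $q^*(kt,x)$.

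For \eqref{parabpointgrad}, the essential input is the parabolic Harnack inequality of \cite{del}, which forces non-negative caloric functions to be Hölder continuous in space and time with an exponent $\alpha > 0$ depending only on $d, c_-, c_+$. Applied to $y \mapsto q^\o(t,0,y)$ on the parabolic cylinder $[t/2, t] \times \{|y-x| \le \sqrt t /2\}$ for $t \ge 1$, it gives
\[
\bigl|q^\o(t,0,x+\e_i) - q^\o(t,0,x)\bigr| \le c\, t^{-\alpha} \sup_{\substack{s \in [t/2,\,t] \\ |y-x| \le \sqrt t}} q^\o(s,0,y),
\]
and the supremum on the right is controlled by \eqref{parabpoint} applied at the nearby point $(s,y)$, at the cost of enlarging the constant $k$. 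For $t \le 1$, the triangle inequality and \eqref{parabpoint} already suffice, since $1 \vee t = 1$. The symmetry $q^\o(t,x,y) = q^\o(t,y,x)$ reduces the bound on $\na_2 q^\o$ to the bound on $\na_3 q^\o$, and relabeling the Hölder exponent yields \eqref{parabpointgrad}.

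The main obstacle in this program is the discrete parabolic Harnack inequality itself: its proof in \cite{del} is a substantial adaptation of Moser iteration and a De Giorgi-type oscillation lemma to the graph setting, requiring careful handling of the absence of a chain rule on $\Z^d$ and of small time scales (which is exactly why the factor $(1 \vee t)^\alpha$ appears rather than $t^\alpha$). Once this input is granted, the rest is a direct translation of Nash--Aronson--Davies arguments, and one verifies transparently that $c, k, \alpha$ depend only on $d, c_-, c_+$.
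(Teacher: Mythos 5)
Your overall strategy for \eqref{parabpointgrad} is the same as the paper's: obtain H\"older continuity in space of the caloric function $q^\omega(\cdot,0,\cdot)$ from the parabolic Harnack inequality (the paper phrases this as an iterated oscillation-decay lemma over dyadic cylinders $Q_l = [t-2^{2l},t]\times\{|z-x|\le 2^l\}$, which is equivalent to what you write), and then transfer $\sup_{Q} q^\omega$ back into the Gaussian-type bound $q^*(kt,x)$. For \eqref{parabpoint} you propose rebuilding the Gaussian upper bound via Nash plus Davies' exponential tilt rather than quoting it; that is a perfectly standard alternative, whereas the paper simply cites \cite{stzh,del,deldeu}.

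The one genuine gap is in the phrase ``the supremum on the right is controlled by \eqref{parabpoint} applied at the nearby point $(s,y)$, at the cost of enlarging the constant $k$.'' On $\Z^d$ the reference kernel $q^*$ is \emph{not} Gaussian: by Proposition~\ref{p:estimqetoile} its exponent is $\mathcal D_t(x)=|x|\arsinh(|x|/t)+t(\sqrt{1+|x|^2/t^2}-1)$, which behaves like $|x|^2/t$ only for $|x|\lesssim t$ and like $|x|\log(|x|/t)$ for $|x|\gg t$. In the Gaussian regime $|x|\le k_3 t$ your statement is exactly right and is carried out in the paper via the two-sided Gaussian comparison \eqref{gaussbounds}. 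But in the super-exponential regime $|x|>k_3 t$, ``enlarging $k$'' does not straightforwardly absorb the shift from $(s,y)$ with $|y-x|\le\sqrt t$, $s\in[t/2,t]$ back to $(t,x)$, because it changes the dominant $|x|\log(|x|/t)$ term. The paper handles this by a case split: for $|x|>k_3 t$ it shows (inequality~\eqref{ineqmclD}) that $\mathcal D_{k_2kt}(x)\ge t+\mathcal D_{\tilde kt}(x)$, so the pointwise bound \eqref{parabpoint} already yields an extra factor $e^{-t}$ that makes the H\"older improvement unnecessary there. Your proposal needs this (or an equivalent) lattice-specific estimate to close; as written it implicitly treats $q^*$ as if it were a true Gaussian.

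A minor additional point: you need to justify that the H\"older exponent $\alpha$, the enlarged $k$, and the final constant are uniform in $\omega\in\Omega$. This is indeed delivered by the Harnack inequality of \cite{del}, whose constants depend only on $d$, $c_-$, $c_+$, but it is worth stating since the proposition claims uniformity over the whole environment space.
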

\begin{proof}
The first part appears in this form in \cite[Section~4]{deldeu}, and the proof given there relies on the closely related \cite[Proposition~3.4]{del} (a similar statement was also obtained in \cite[Lemma~1.9]{stzh}). It also appears that as soon as \eqref{parabpoint} holds for some value of $k$, it holds for every larger value as well (with a different $c$). This will also be clear from the proof of \eqref{parabpointgrad}, to which we now turn.

Inequality \eqref{parabpointgrad} roughly corresponds to \cite[Theorem~1.31]{stzh}. In order to provide the reader with a precise proof, we begin by recalling the following consequence of the parabolic Harnack inequality. For $r > 0$ and $x \in \Z^d$, let
$$
Q(x,r) = [0,4r^2] \times \{ z \in \Z^d : |z-x| \le 2r \},
$$
$$
Q'(x,r) = [3r^2,4r^2] \times \{z \in \Z^d : |z-x| \le r \}.
$$
For a real function $u$ defined on a set $A$, we write
$$
\osc_A u = \sup_A u - \inf_A u.
$$
\begin{prop}[increase of oscillations \cite{stzh}]
\label{p:oscill}
There exists $\lambda > 1$ such that for every $\omega \in \Omega$, $r > 0$ and $x \in \Z^d$, if $u : \R_+ \times \Z^d \to \R$ satisfies
$$
\dr_t u = -\na^* \cdot A(\cdot, \omega) \na u \qquad \text{in } Q(x,r),
$$
then
$$
\osc_{Q(x,r)} u \ge \lambda \osc_{Q'(x,r)} u.
$$
\end{prop}
The proof of Proposition~\ref{p:oscill} can be found in \cite[Lemma~1.30]{stzh}, or can be derived from the parabolic Harnack inequality given in \cite[Theorem~2.1]{del} as in the proof of \cite[Proposition~3.4]{deldeu} (see also \cite[p.\ 188]{del} on the fact that numerical constants appearing in the definitions of $Q$ and $Q'$ are irrelevant, and can thus be chosen as convenient).

We will also need the following result, which we borrow from \cite[Section~4]{deldeu}. The expressions involved are less simple than what one might imagine a priori, due to lattice effects.
\begin{prop}[explicit estimates on $q^*$]
\label{p:estimqetoile}
Let 
$$
\mcl{D}_t(x) = |x| \arsinh\Ll( \frac{|x|}{t} \Rr) + t \Ll( \sqrt{1+\frac{|x|^2}{t^2}} - 1 \Rr).
$$
There exist constants $c_1,c_2,k_1,k_2$ such that for every $t \ge 0$ and $x \in \Z^d$,
$$
\frac{c_1}{(1\vee t)^{d/2}}\exp\Ll(-\mcl{D}_{k_1t}(x)\Rr) \le q^*(t,x) \le \frac{c_2}{(1\vee t)^{d/2}}\exp\Ll(-\mcl{D}_{k_2t}(x)\Rr).
$$
\end{prop}
\begin{rem}
The following alternative expression for $\mcl{D}_t(x)$ is also given in \cite{deldeu}:
$$
\mcl{D}_t(x) = \max_{s} \Ll(s |x| - t (\cosh(s) - 1)\Rr),
$$
which makes it transparent that $t \mapsto \mcl{D}_t(x)$ is decreasing (in the wide sense).
\end{rem}
Let us see how to use these results to prove that \eqref{parabpointgrad} holds. We fix $k$ such that \eqref{parabpoint} holds, and $k_1, k_2$ as given by Proposition \ref{p:estimqetoile}. We let $k_3 > 0$ be some parameter yet to be fixed, and begin by proving that \eqref{parabpointgrad} holds provided $|x| \le k_3 t$. (This is the most relevant case, but lattice effects prevent us from giving a unified proof.)

It follows from Proposition~\ref{p:estimqetoile} that there exists constants $c_1,c_2$ such that if $|x|\le 2 k_3 k t$, then 
\begin{equation}
\label{gaussbounds}
\frac{c_1}{(1 \vee t)^{d/2}} e^{-|x|^2/c_1 t} \le q^*(t,x) \le \frac{c_2}{(1 \vee t)^{d/2}} e^{- |x|^2/c_2 t}.
\end{equation}
Let $x,t$ satisfy $|x| \le k_3 t$, and let $L = \lfloor \log_2(t/2)/2 \rfloor$. For $L < 0$ (i.e.\ $t < 2$), there is nothing to prove since \eqref{parabpoint} holds. Otherwise, for every $l \in \{0, \ldots, L\}$, we consider the cylinders
$$
Q_l =[t-2^{2l},t] \times \{z \in \Z^d : |z-x| \le 2^l\}.
$$
Let $y$ be a neighbour of $x$. Noting that $(t,x)$ and $(t,y)$ both belong to $Q_0$ and applying Proposition~\ref{p:oscill} iteratively, we obtain that
$$
\Ll| q^\omega(t,0,x) - q^\o(t,0,y) \Rr| \le \lambda^{-L} \osc_{Q_L} q^\o(\cdot,0,\cdot) \le \lambda^{-L} \sup_{Q_L} q^\o(\cdot,0,\cdot).
$$
By the definition of $L$, the cylinder $Q_L$ is included in the cylinder
$$
Q = [t/2,t] \times \{z \in \Z^d : |z-x| \le \sqrt{t}\}.
$$
In view of \eqref{parabpoint} and \eqref{gaussbounds}, we thus get that
$$
\Ll| q^\omega(t,0,x) - q^\o(t,0,y) \Rr| \le c \lambda^{-L} \ q^*(k_0 t,x),
$$
for every $k_0 > 0$ large enough, uniformly over $x$ and $t$ satisfying $|x| \le k_3 t$. This is indeed the inequality \eqref{parabpointgrad} by the definition of $L$ and the fact that $\lambda > 1$. 

There remains to justify \eqref{parabpointgrad} for $|x| > k_3 t$. Recall that we still have the freedom to choose $k_3$ as convenient. By \eqref{parabpoint} and Proposition~\ref{p:estimqetoile}, we have
\begin{equation}
\label{eqeq}
q^\o(t,0,x) \le q^*(kt,x) \le \frac{c}{(1\vee t)^{d/2}} \exp\Ll(- \mcl{D}_{k_2kt}(x)\Rr).
\end{equation}
We now claim that choosing $k_3$ sufficiently large, we can ensure that for every $\td{k} \ge 2 k k_2$,
\begin{equation}
\label{ineqmclD}
|x| \ge k_3 t \ \Rightarrow \ \mcl{D}_{k_2kt}(x) \ge t + \mcl{D}_{\td{k} t}(x).
\end{equation}

Let us first see why \eqref{ineqmclD} enables to conclude. It follows from \eqref{eqeq} and \eqref{ineqmclD} that
$$
q^\o(t,0,x) \le e^{-t} \frac{c}{(1\vee t)^{d/2}} \exp\Ll(- \mcl{D}_{\td{k}t}(x)\Rr).
$$
By Proposition~\ref{p:estimqetoile}, this is smaller than
$$
c e^{-t} q^*(\td{k} t / k_1, x),
$$
for every $\td{k} \ge 2 k k_2$ and every $x, t$ satisfying $|x| \ge k_3 t$. This clearly implies \eqref{parabpointgrad} for $|x| \ge k_3 t$. The proof is thus complete, provided we show \eqref{ineqmclD}.

In order to do so, we first note that
$$
t \Ll( \sqrt{1+\frac{|x|^2}{t^2}} - 1 \Rr) = \sqrt{t^2  + |x|^2} - t = \int_0^{|x|} \frac{u}{\sqrt{t^2 + u^2}} \ \d u \le |x|.
$$
The right-hand side of \eqref{ineqmclD} thus holds as soon as 
\begin{equation}
\label{ineq22}
|x| \arsinh\Ll( \frac{|x|}{k_2 k t} \Rr) \ge t + |x| + |x|\arsinh\Ll( \frac{|x|}{2 k_2 k t} \Rr).
\end{equation}
If $k_3$ is sufficiently large (and in particular $\ge 1$), then it is clear that $|x| \ge k_3 t$ implies
$$
\arsinh\Ll( \frac{|x|}{k_2 k t} \Rr) \ge 2 + \arsinh\Ll( \frac{|x|}{2 k_2 k t} \Rr),
$$
which in turn implies \eqref{ineq22}.
\end{proof}
The bound \eqref{parabpointgrad} on the gradient of the heat kernel cannot be improved in general (see for instance \cite[p.\ 364]{deldeu}). However, upper bounds matching the homogeneous case can be recovered if one takes averages over the randomness of the stationary field of conductances. For simplicity, we write $q\p(t,x,y)$ instead of $q^{\o\p}(t,x,y)$. The following result is due to \cite[Theorem~1.4]{connadny} and \cite[Theorem~1.1]{deldeu}.
\begin{prop}[averaged control of the heat kernel \cite{connadny,deldeu}]
There exist constants $c, k > 0$ such that for every $p \in [0,1]$, $t \ge 0$, $x \in \Z^d$ and $i \neq j \in \{2,3\}$, 
\begin{equation}
\label{parabaverage1}
\Ll( \E[|\na_i q\p(t,0,x)|^2]  \Rr)^{1/2} \le c \ \frac{q^*(k t, x)}{\sqrt{1 \vee t}},
\end{equation}
\begin{equation}
\label{parabaverage2}
 \E[|\na_i \na_j q\p(t,0,x)|]   \le c \ \frac{q^*(k t, x)}{{1 \vee t}}.
\end{equation}
\end{prop}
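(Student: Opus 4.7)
The bounds \eqref{parabaverage1}--\eqref{parabaverage2} are stated as following from \cite{connadny,deldeu}; my plan for re-deriving them is to follow those references. The pointwise gradient estimate \eqref{parabpointgrad} only provides an exponent $\alpha > 0$ that, in the present generality, need not match the diffusive $1/2$, because on a single realization of the rough environment the gradient of the heat kernel does not enjoy the regularity of the homogeneous case. The improvement in \eqref{parabaverage1}--\eqref{parabaverage2} therefore has to come from averaging: once $\E$ is taken, the stationarity of $\o\p$ allows one to exchange pointwise gradient bounds for integrated Caccioppoli-type identities, and at the level of second moments the medium effectively behaves like the homogeneous one.

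For \eqref{parabaverage1}, the plan is to start from the standard $L^2$ semigroup identity
\[
\int_{t/2}^t \sum_{x \in \Z^d} \na_3 q\p(s,0,x) \cdot A\p(x)\, \na_3 q\p(s,0,x) \, \d s \le c\, q\p(t,0,0),
\]
obtained from the Chapman--Kolmogorov relation $q\p(2s,0,0) = \sum_y q\p(s,0,y)^2$ together with Proposition~\ref{parabolic}, and then to take $\E$. To upgrade the resulting summed-in-space bound to a pointwise-in-$x$ Gaussian estimate, I would re-run the argument with $q\p$ replaced by the Davies-tilted kernel $e^{\lambda \cdot x} q\p(t,0,x)$---the conjugated operator still satisfies the ellipticity bounds up to a factor depending on $|\lambda|$---and optimize in $\lambda$. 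A Duhamel step on $[t/2,t]$ then converts the time-integrated bound into the claimed pointwise-in-time statement. The bound on $\na_2$ follows from that on $\na_3$ by the reversibility $q\p(t,x,y)=q\p(t,y,x)$.

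For \eqref{parabaverage2}, the plan is to use semigroup composition to write
\[
\na_2 \na_3 q\p(t,0,x) = \sum_{y \in \Z^d} \na_2 q\p(t/2,0,y) \, \na_3 q\p(t/2,y,x),
\]
take $\E$ of the absolute value, and apply Cauchy--Schwarz in $y$; each factor is then controlled by \eqref{parabaverage1}, which reproduces the scaling $(1 \vee t)^{-1}$. The Gaussian spatial factor $q^*(kt,x)$ survives because the convolution $q^*(kt/2,\cdot) \ast q^*(kt/2,\cdot)$ is dominated by a Gaussian of comparable variance.

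The main obstacle I anticipate is the Duhamel/monotonicity step in \eqref{parabaverage1}: one must argue that $s \mapsto \E[|\na_3 q\p(s,0,x)|^2]$ behaves essentially monotonically on dyadic scales in $s$, in order to extract a pointwise-in-time estimate from an integral-in-time one. Such monotonicity has no deterministic analogue on a single realization and really does rely on having taken $\E$; making it rigorous in the discrete stationary setting is the technical heart of \cite{connadny,deldeu}.
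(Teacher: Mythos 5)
The paper does not prove this proposition; it simply cites \cite[Theorem~1.4]{connadny} and \cite[Theorem~1.1]{deldeu}. Your deduction of \eqref{parabaverage2} from \eqref{parabaverage1} via the Chapman--Kolmogorov decomposition, Cauchy--Schwarz in $y$, and stationarity (to move the tilde-shifted gradient back to the origin) is sound and does reproduce the $(1\vee t)^{-1}$ scaling with the correct Gaussian factor. The proposed route to \eqref{parabaverage1}, however, has a genuine gap.

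The energy identity you start from is correct, and so is the monotonicity you worry about at the end: since $\L\p = \na^*\cdot A\p\na$ is self-adjoint and nonnegative, the map $s\mapsto \sum_y \na_3 q\p(s,0,y)\cdot A\p(y)\na_3 q\p(s,0,y) = \langle \delta_0, \L\p e^{-2s\L\p}\delta_0\rangle$ is \emph{deterministically} nonincreasing, so the passage from the time-integrated bound to a pointwise-in-$t$ bound is elementary and does not rely on averaging at all. That is not where the difficulty lies. The problem is the spatial localization. Carrying out the Davies tilting and optimizing in $\lambda$ turns the energy bound into a \emph{deterministic} pointwise estimate of the form
$$|\na_3 q\p(t,0,x)|^2 \le \frac{c}{(1\vee t)^{d/2+1}}\,e^{-|x|^2/(ct)},$$
simply because a weighted $\ell^2$ bound $\sum_y e^{2\lambda\cdot y}|\na q|^2(y)\le M$ only gives $|\na q(x)|^2\le M e^{-2\lambda\cdot x}$ pointwise. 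The target \eqref{parabaverage1} requires $\E[|\na_3 q\p(t,0,x)|^2]\lesssim (1\vee t)^{-d-1}e^{-|x|^2/(ct)}$, which is smaller by a factor $(1\vee t)^{d/2}$ on the diffusive scale; for $d\ge 3$ the energy--Davies bound is in fact even weaker than the deterministic oscillation bound \eqref{parabpointgrad}. Your expectation $\E[\,\cdot\,]$ enters your argument only passively, so it cannot produce this gain.

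What is missing is the spatial equipartition that the expectation actually buys you: the total energy $\sum_y \E[|\na_3 q\p(t,0,y)|^2]\lesssim (1\vee t)^{-d/2-1}$ has to be shown to be spread out roughly evenly over the $\sim (1\vee t)^{d/2}$ sites at diffusive distance, so that each summand is $\lesssim (1\vee t)^{-d-1}$. Proving this equidistribution requires using stationarity and the parabolic Harnack/oscillation machinery jointly with the expectation, and it is precisely the content of \cite{deldeu} (and, in Fourier form, of \cite{connadny}). Without that step the scheme you describe does not close.
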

For every $\omega \in \Omega$, $\mu > 0$ and $x,y \in \Z^d$, we define the Green function 
$$
G_\mu^\o(x,y) = \int_0^{+\infty} e^{-\mu t} q^\o(t,x,y) \ \d t.
$$
Note that the function $(G_\mu^\omega(x,y))_{y \in \Z^d}$ satisfies
$$
\mu G^\o_\mu(x,y) - \na^* \cdot A(y,\omega) \na G^\o_\mu(x,y) = \1_{x = y} \qquad (y \in \Z^d),
$$
where $\na^*$ and $\na$ are understood as acting on the $y$ variable. Note also that the Green function is symmetric, that is, $G_\mu^\o(x,y) = G_\mu^\o(y,x)$. The estimates obtained on the heat kernel transfer into estimates on the Green function by integration.


\begin{prop}[pointwise control of the Green function]
There exist $c, \td{c} > 0, \alpha > 0$ such that for every $\mu \in (0,1/2]$, $\omega \in \Omega$, $x \in \Z^d$ and $i \in \{1,2\}$,
\begin{equation}
\label{greenpoint1}
|G^\o_\mu(0,x)| \le 
c \left|
\begin{array}{ll}
\displaystyle{
\log(\mu^{-1}) \ e^{-\td{c} \sqrt{\mu} |x|} 
}	& \text{if } d = 2, \\
\displaystyle{
\frac{1}{(1\vee|x|)^{d-2}} \ e^{-\td{c} \sqrt{\mu} |x|}
}	& \text{if } d \ge 3,
\end{array}
\right.
\end{equation}
\begin{equation}
\label{greenpoint2}
|\nabla_i G^\o_\mu(0,x)| \le \frac{c}{(1\vee|x|)^{d-2+\alpha}} \ e^{-\td{c} \sqrt{\mu} |x|}.
\end{equation}
\end{prop}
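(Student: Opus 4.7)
\medskip

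\noindent \textbf{Proof proposal.} The plan is to derive both bounds from the heat kernel estimates of Proposition~\ref{parabolic} by integration against $e^{-\mu t}$. Starting from the representation
$$G_\mu^\o(0,x) = \int_0^{+\infty} e^{-\mu t}\, q^\o(t,0,x)\ \d t,$$
the pointwise bound \eqref{parabpoint} gives
$$|G_\mu^\o(0,x)| \le c \int_0^{+\infty} e^{-\mu t}\, q^*(kt,x)\ \d t,$$
and \eqref{parabpointgrad} gives the same estimate for $|\na_i G_\mu^\o(0,x)|$ with an additional factor $(1\vee t)^{-\alpha}$ in the integrand. Both claims therefore reduce to a Laplace-type estimate on the continuum-like kernel $q^*$.

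To estimate this integral, I would bound $q^*(kt,x)$ via Proposition~\ref{p:estimqetoile} by $c(1\vee t)^{-d/2} \exp(-\mcl{D}_{k_2kt}(x))$ and split the integral at $t=|x|$. In the diffusive regime $t\ge|x|$, the function $\mcl{D}_{kt}(x)$ is essentially quadratic, of order $|x|^2/t$, and the elementary inequality $\mu t + |x|^2/(ct)\ge 2|x|\sqrt{\mu/c}$ extracts the exponential factor $e^{-\td{c}\sqrt{\mu}|x|}$ while leaving an integral of the form $\int_{|x|}^\infty (1\vee t)^{-d/2} e^{-\mu t/2}\ \d t$. In the ballistic regime $t\le|x|$, the variational expression $\mcl{D}_t(x)=\max_s(s|x|-t(\cosh s-1))$ from the remark following Proposition~\ref{p:estimqetoile} yields (by optimizing with $s$ a large constant) a linear-in-$|x|$ decay much stronger than $\sqrt{\mu}|x|$, so the short-time contribution absorbs $e^{-\mu t}$ trivially. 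For $d\ge3$ the long-time integral is controlled by $c(1\vee|x|)^{-(d-2)}$; for $d=2$ it is controlled by $c\log(\mu^{-1})$, since the logarithmic divergence at infinity is cut off at $t\sim\mu^{-1}$ by the residual $e^{-\mu t/2}$. For the gradient, the additional $(1\vee t)^{-\alpha}$ improves the polynomial prefactor to $(1\vee|x|)^{-(d-2+\alpha)}$ and renders the integral absolutely convergent at infinity in all $d\ge2$, eliminating the logarithm.

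The main technical point is simply to book-keep the prefactor in $|x|$ uniformly across the two regimes and to handle lattice effects at short times and small $|x|$, where $\mcl{D}_t(x)$ deviates from its Gaussian form and where the $1\vee|x|$ convention must be respected. Since Proposition~\ref{p:estimqetoile} already supplies tight two-sided bounds on $q^*$ and the required manipulations amount to a standard Laplace-transform computation once the exponential has been separated off by AM--GM, no serious obstacle is expected: this is essentially the classical passage from parabolic to elliptic Green function estimates.
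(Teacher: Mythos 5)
Your overall plan is the same as the paper's: integrate the heat-kernel bounds of Proposition~\ref{parabolic} against $e^{-\mu t}$, split at $t=|x|$ into ballistic and diffusive regimes, and use the two-sided bounds on $q^*$ from Proposition~\ref{p:estimqetoile}. The ballistic piece ($t\le|x|$), the $d=2$ logarithm, and the handling of the extra $(1\vee t)^{-\alpha}$ for the gradient are all fine in outline.

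However, there is a genuine computational error in the diffusive regime ($t\ge|x|$), and it is exactly the place where the polynomial prefactor is produced. You apply AM--GM in the form
$$
\tfrac{\mu t}{2} + \tfrac{|x|^2}{c\,t} \;\ge\; \td{c}\sqrt{\mu}\,|x|,
$$
thereby consuming the \emph{entire} Gaussian factor $e^{-|x|^2/ct}$ and leaving only
$$
\int_{|x|}^{\infty} (1\vee t)^{-d/2}\, e^{-\mu t/2}\,\d t .
$$
For $d\ge 3$ this integral is of order $(1\vee|x|)^{-(d-2)/2}$ (or, when $\mu|x|\gg 1$, of order $\mu^{(d-2)/2}$), \emph{not} of order $(1\vee|x|)^{-(d-2)}$ as you assert. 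To see that this is not a harmless constant: take $d=3$, fix $|x|$ large, and send $\mu\to 0$; your bound gives $|x|^{-1/2}$ while the claim requires $|x|^{-1}$. The same defect propagates to the gradient bound, where you would get $|x|^{-(d-2)/2-\alpha}$ instead of $|x|^{-(d-2)-\alpha}$ for $d\ge 3$.

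The fix is small but necessary: use only half of the Gaussian in the AM--GM step, e.g.
$$
e^{-\mu t}e^{-|x|^2/c_2 t} \;=\; e^{-\mu t/2}e^{-|x|^2/2c_2 t}\cdot e^{-\mu t/2}e^{-|x|^2/2c_2 t} \;\le\; e^{-\sqrt{2\mu/c_2}\,|x|}\; e^{-|x|^2/2c_2 t},
$$
so that the residual integral retains a Gaussian cutoff and
$$
\int_{0}^{\infty} (1\vee t)^{-d/2}\, e^{-|x|^2/2c_2 t}\,\d t \;\le\; \frac{c}{(1\vee|x|)^{d-2}} \qquad (d\ge 3).
$$
(Equivalently, one can change variables $u=t/|x|^2$ \emph{before} extracting the exponential, as the paper does: the $|x|^{-(d-2)}$ then appears directly as the Jacobian factor, and the remaining $u$-integral is uniformly bounded with the $e^{-\td{c}\sqrt{\mu}|x|}$ coming out of the $e^{-\mu|x|^2 u}$ term.) With this correction your argument goes through and matches the paper's, including the $d=2$ case and the gradient bound.
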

\begin{proof}
We begin by justifying \eqref{greenpoint1} for $d \ge 3$. In view of \eqref{parabpoint}, it suffices to bound
\begin{equation}
\label{ineq1}
\int_0^{+\infty} e^{-\mu t} p^*(t,x) \ \d t = \int_0^{|x|} e^{-\mu t} p^*(t,x) \ \d t + \int_{|x|}^{+\infty} e^{-\mu t} p^*(t,x) \ \d t.
\end{equation}
Using \eqref{gaussbounds}, we can bound the second integral in the right-hand side above, up to a constant, by
\begin{equation}
\label{integr1}
\int_0^{+\infty} \frac{e^{-\mu t}}{(1 \vee t)^{d/2}} e^{-|x|^2/c_2 t} \ \d t,
\end{equation}
which, by a change of variables, can be bounded by
$$
|x|^{-(d-2)} \int_0^{+\infty} \frac{e^{-\mu|x|^2 u}}{u^{d/2}} e^{-1/c_2  u} \ \d u.
$$
Moreover,
\begin{eqnarray*}
&& \int_0^{+\infty} \frac{e^{-\mu|x|^2 u}}{u^{d/2}} e^{-1/c_2 u} \ \d u \\
& \le & e^{-\sqrt{\mu} |x| /2c_2}  \int_0^{(\sqrt{\mu} |x|)^{-1}} \frac{e^{-1/2 c_2 u}}{u^{d/2}} \ \d u + e^{-\sqrt{\mu} |x|} \int_{(\sqrt{\mu} |x|)^{-1}}^{+\infty} \frac{e^{-1/c_2 u}}{u^{d/2}} \ \d u \\
& \le & C (e^{-\sqrt{\mu} |x| /2c_2} + e^{-\sqrt{\mu}|x|}).
\end{eqnarray*}
We have thus obtained that for $d \ge 3$, there exists $c,\td{c} > 0$ such that
$$
\int_{|x|}^{+\infty} e^{-\mu t} p^*(t,x) \ \d t \le \frac{c}{|x|^{d-2}} e^{-\td{c} \sqrt{\mu} |x|}.
$$
The first integral in the right-hand side of \eqref{ineq1} can be bounded more easily. Indeed, it follows from Proposition~\ref{p:estimqetoile} that for $t < |x|$, we have $p^*(t,x) \le e^{-\td{c} |x|}$ for some $\td{c} > 0$, and thus
$$
\int_0^{|x|} e^{-\mu t} p^*(t,x) \ \d t \le |x| e^{-\td{c}|x|}.
$$
To summarize, we have thus shown that
$$
\int_0^{+\infty} e^{-\mu t} p^*(t,x) \ \d t \le \frac{c}{|x|^{d-2}} e^{-\td{c} \sqrt{\mu} |x|}.
$$
In order to conclude, it then suffices to see that the Green function is bounded uniformly over $\mu$. But this is clear since $\int p^*(t,0)  \d t$ is finite.

When $d = 2$, one needs to be more careful when considering the integral appearing in \eqref{integr1}. It is straightforward to check that this integral is bounded by a constant times $\log(\mu^{-1})$, uniformly over $x$. If $\sqrt{\mu}|x| \le 1$, then there is nothing more to prove. Else, we split the integral along
$$
\Ll(\int_0^1 + \int_1^{|x|/\sqrt{\mu}} + \int_{|x|/\sqrt{\mu}}^{+\infty} \Rr) \frac{e^{-\mu t}}{1 \vee t} \ e^{-|x|^2/c_2 t} \ \d t.
$$
The first integral is bounded by $e^{-|x|^2/c_2}$, and can thus be discarded. For the second one, we obtain the bound
$$
e^{-\sqrt{\mu}|x|/c_2} \int_1^{+\infty}\frac{e^{-\mu t}}{t} \ \d t \le 2 \log(\mu^{-1}) e^{-\sqrt{\mu}|x|/c_2},
$$
while the third integral can be bounded by
$$
e^{-\sqrt{\mu}|x|/2} \int_{|x|/\sqrt{\mu}}^{+\infty} \frac{e^{-\mu t /2}}{t} \ \d t \le \frac{e^{-\sqrt{\mu}|x|}}{\sqrt{\mu}|x|} ,
$$
and the claim is thus proved. 

Inequality \eqref{greenpoint2} is obtained from \eqref{parabpointgrad}, following the same reasoning as for the case $d \ge 3$ of the proof of \eqref{greenpoint1}.
\end{proof}

We write $G\p$ for $G^{\o\p}$.

\begin{prop}[averaged control of the Green function]
\label{p:greenaverage}
There exists $c, \td{c} > 0$ such that for every $p \in [0,1]$, $\mu > 0$, $x \in \Z^d$ and $i \neq j \in \{1,2\}$,
\begin{equation}
\label{greenaverage1}
\Ll( \E[|\nabla_i G\p_\mu(0,x)|^2] \Rr)^{1/2} \le \frac{c}{(1\vee|x|)^{d-1}} \ e^{-\td{c} \sqrt{\mu} |x|},
\end{equation}
\begin{equation}
\label{greenaverage2}
\E[|\nabla_i\nabla_j G\p_\mu(0,x)|]  \le \frac{c}{(1\vee|x|)^{d}} \ e^{-\td{c} \sqrt{\mu} |x|},
\end{equation}
\end{prop}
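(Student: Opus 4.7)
The plan is to deduce both estimates from the averaged heat kernel bounds \eqref{parabaverage1}--\eqref{parabaverage2} via Laplace transform, following the same overall scheme used in the pointwise case, but with better decay in $t$ due to the extra powers of $\sqrt{1\vee t}$ on the right-hand side of the heat kernel estimates. Concretely, since $\na_i$ commutes with the $t$-integration that produces $G_\mu\p$, I would write
\begin{equation*}
\na_i G_\mu\p(0,x) = \int_0^{+\infty} e^{-\mu t} \na_i q\p(t,0,x) \, \d t,
\end{equation*}
and similarly for $\na_i \na_j$. For \eqref{greenaverage1}, Minkowski's integral inequality applied to the $L^2(\P)$ norm gives
\begin{equation*}
\Ll(\E[|\na_i G_\mu\p(0,x)|^2]\Rr)^{1/2} \le \int_0^{+\infty} e^{-\mu t} \Ll(\E[|\na_i q\p(t,0,x)|^2]\Rr)^{1/2} \d t \le c \int_0^{+\infty} \frac{e^{-\mu t} q^*(kt,x)}{\sqrt{1 \vee t}} \, \d t.
\end{equation*}
For \eqref{greenaverage2}, by linearity of the expectation one directly gets
\begin{equation*}
\E[|\na_i \na_j G_\mu\p(0,x)|] \le c \int_0^{+\infty} \frac{e^{-\mu t} q^*(kt,x)}{1 \vee t} \, \d t.
\end{equation*}

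Next I would plug in the explicit upper bound on $q^*$ from Proposition~\ref{p:estimqetoile} (i.e.\ the Gaussian bound $(1\vee t)^{-d/2} e^{-|x|^2/c_2 t}$ in the region $t \gtrsim |x|$, and the exponential bound $e^{-\td c |x|}$ for $t < |x|$), exactly as in the derivation of \eqref{greenpoint1}. After splitting each integral at $t = |x|$, the contribution of $0 < t < |x|$ decays like $e^{-\td c|x|}$ and is harmless. For $t \ge |x|$, one reduces to integrals of the form
\begin{equation*}
\int_0^{+\infty} \frac{e^{-\mu t}}{(1 \vee t)^{(d+\beta)/2}} \, e^{-|x|^2/c_2 t} \, \d t
\end{equation*}
with $\beta = 1$ for \eqref{greenaverage1} and $\beta = 2$ for \eqref{greenaverage2}. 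The change of variables $t = |x|^2 u$ converts this into $|x|^{-(d-2+\beta)}$ times an integral over $u$, which is then split at $u = (\sqrt{\mu}|x|)^{-1}$ exactly as in the proof of \eqref{greenpoint1}; each of the two resulting pieces produces a factor $e^{-\td c \sqrt{\mu} |x|}$. This yields the desired prefactors $(1 \vee |x|)^{-(d-1)}$ and $(1 \vee |x|)^{-d}$ respectively.

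The one point that needs a small extra check is behaviour for small $|x|$ (including the origin), where $(1 \vee |x|)$ simply becomes $1$; in that regime the Gaussian factor is harmless and the integrals are bounded uniformly in $\mu$ because the extra power $(1 \vee t)^{-\beta/2}$ with $\beta \ge 1$ kills the logarithmic divergence that was responsible for the $\log(\mu^{-1})$ correction in the $d=2$ case of \eqref{greenpoint1}. This is precisely the reason no $\log$ term appears in \eqref{greenaverage1}--\eqref{greenaverage2}, and it explains why no case analysis in $d$ is needed here.

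The main obstacle, as in the pointwise case, is the clean handling of the two regimes $|x| \lesssim t$ and $|x| \gtrsim t$ and the correct extraction of the exponential factor $e^{-\td c \sqrt{\mu}|x|}$; once Minkowski's inequality has been invoked to bring the expectation inside the $t$-integral, everything reduces to the same elementary integral computations already carried out in the proof of \eqref{greenpoint1}, and no new probabilistic input is needed.
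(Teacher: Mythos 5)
Your proposal is correct and matches the paper's proof: Minkowski's integral inequality (resp.\ linearity of $\E$) brings the expectation inside the Laplace-transform integral, after which the averaged heat kernel bounds \eqref{parabaverage1}--\eqref{parabaverage2} are integrated against $e^{-\mu t}$ exactly as in the derivation of \eqref{greenpoint1}. The extra detail you give about the exponent shift $(d+\beta)/2$ with $\beta\ge 1$ removing the $d=2$ logarithm is a correct and welcome clarification of why no case analysis in $d$ is needed.
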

\begin{proof}
Recall that Minkowski's integral inequality ensures that for any positive measurable function $f$, 
$$
\E\Ll[ \Ll( \int f(s) \ \d s \Rr)^2\Rr] 
\le \Ll( \int \E[f(s)^2]^{1/2} \ \d s  \Rr)^2.
$$
Applying this observation to 
$$
\E[|\nabla_i G\p_\mu(0,x)|^2] \le \E\Ll[ \Ll(\int e^{-\mu t} |\na_i q\p(t,0,x)| \ \d t\Rr)^2 \Rr],
$$
we get that
$$
\Ll( \E[|\nabla_i G\p_\mu(0,x)|^2] \Rr)^{1/2} \le \int_0^{+\infty} e^{-\mu t} \ \E\Ll[|\na_i q\p(t,0,x)|^2\Rr]^{1/2} \ \d t.
$$
The rest of the proof of \eqref{greenaverage1} is the same as that of \eqref{greenpoint1}, proceeding by integration of the estimate obtained in \eqref{parabaverage1}. The proof of \eqref{greenaverage2} is identical, integrating estimate \eqref{parabaverage2}.
\end{proof}

%
%
%
%
%
%
%
\section{Solving regularized elliptic equations}
\label{s:solve}

We say that a function $f : \Z^d \to \R$ is of \emph{sub-exponential growth} if for every $\delta > 0$, we have $|f(x)| = O(e^{\delta|x|})$ as $|x|$ tends to infinity.

\begin{thm}[Existence and uniqueness of solutions]
\label{t:ex-un}
Let $\mu > 0$, and $f: \Z^d \to \R$ be of sub-exponential growth. For every $\omega \in \Omega$, there exists a unique function $\chi : \Z^d \to \R$ of sub-exponential growth satisfying
\begin{equation}
\label{eqphichi}
\mu \chi - \na^* \cdot A^\o \na \chi = f  \qquad (\text{in } \Z^d).
\end{equation}
Moreover, $\chi$ is given by
\begin{equation}
\label{defphichi}
\chi(x) = \sum_{y \in \Z^d} G^\o_\mu(x,y) \ f(y) \qquad (x \in \Z^d).
\end{equation}
\end{thm}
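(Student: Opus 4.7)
First, for existence, I would define $\chi$ via the explicit formula~\eqref{defphichi} and verify directly that it has the desired properties. The pointwise Green function estimate~\eqref{greenpoint1} gives exponential decay of $G^\omega_\mu(x,y)$ in $|y-x|$, so picking any $\delta < \td{c}\sqrt{\mu}$ such that the sub-exponential hypothesis yields $|f(y)| \le C_\delta e^{\delta|y|}$, and using the triangle inequality $|y| \le |x| + |y-x|$, shows that the series in~\eqref{defphichi} is absolutely convergent and defines a function with $|\chi(x)| \le C e^{\delta|x|}$, hence of sub-exponential growth. Since the operator $\mu - \na^*\cdot A\na$ only involves the $2d+1$ lattice points around any given $x$, one can apply it term by term to~\eqref{defphichi}; using the symmetry $G^\omega_\mu(x,y) = G^\omega_\mu(y,x)$ together with the identity $\mu G^\omega_\mu(x,\cdot) - \na^*\cdot A\na G^\omega_\mu(x,\cdot) = \1_{\{x\}}$ recalled in Section~\ref{s:green}, one obtains that $\chi$ solves~\eqref{eqphichi}.

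For uniqueness, I would show that any sub-exponentially growing solution of the homogeneous equation $\mu\chi - \na^*\cdot A\na\chi = 0$ must vanish identically. The approach is a weighted Caccioppoli estimate with an exponentially decaying weight. Fixing a small $\beta > 0$, set $\eta(x) = e^{-\beta|x|}$, and test the homogeneous equation against $\eta^2 \chi$; to make this rigorous I would first multiply $\eta$ by a plateau cutoff supported in a large ball of radius $N$, perform the computation there, and send $N \to \infty$ at the end. Discrete integration by parts, using the decomposition of $\na_e(\eta^2\chi)$ into its $\na_e\chi$ and $\na_e(\eta^2)$ pieces, together with Young's inequality to absorb the cross term, leads to a Caccioppoli-type bound of the form
\[
\mu \sum_{x \in \Z^d} \eta^2 \chi^2 \;\le\; C \sum_e \omega_e\, |\na_e \eta|^2 \, \ov{\chi^2}_e,
\]
where $\ov{\chi^2}_e$ denotes the average of $\chi^2$ over the endpoints of~$e$. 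For the exponential weight, the elementary bound $|\na_e \eta|^2 \le C \beta^2 \, \ov{\eta^2}_e$ (uniform for $\beta$ bounded) converts the right-hand side into $C'\beta^2 \sum_x \eta^2 \chi^2$. Choosing $\beta$ small enough that $C'\beta^2 < \mu$ allows this term to be absorbed by the left-hand side, forcing $\sum_x \eta^2 \chi^2 = 0$ and hence $\chi \equiv 0$.

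The main obstacle is to justify the global energy identity despite $\eta$ being non-compactly supported. This requires, first, choosing $\beta$ strictly larger than the exponent $\delta$ appearing in the sub-exponential bound $|\chi(x)| \le C_\delta e^{\delta|x|}$, which is possible precisely because $\chi$ is sub-exponential, so that the weighted norm $\sum_x e^{-2\beta|x|} \chi^2$ is finite a priori; and second, controlling the error introduced by the cutoff at radius $N$, whose contribution to $|\na_e\eta|^2$ is of order $1/N^2$ and therefore vanishes in the limit, again thanks to the finiteness of the weighted norm. Once these two points are handled, the interplay between the positive zero-order term $\mu \sum \eta^2 \chi^2$ and the smallness of the gradient of the exponential weight closes the argument.
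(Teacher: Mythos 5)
Your existence argument is the same as the paper's: define $\chi$ by formula~\eqref{defphichi}, use the exponential decay in~\eqref{greenpoint1} to check absolute convergence of the series and sub-exponential growth of $\chi$, and apply the operator term by term, together with the symmetry of the Green function and its defining equation, to verify~\eqref{eqphichi}.

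For uniqueness your argument is correct but takes a genuinely different route from the paper's. The paper sums the homogeneous equation against $\chi$ over the box $B_n$, so that the discrete integration-by-parts error is supported on the thin annulus $B_{n+2}\setminus B_{n-2}$, and from this obtains the recursion $\mu u_n \le c\,(u_{n+2}-u_{n-2})$ for $u_n = \sum_{x\in B_n}\chi(x)^2$; since this forces $u_{n+2} \ge (1+\mu/c)\,u_{n-2}$, a nonvanishing $u_n$ would grow geometrically in $n$, contradicting the sub-exponential growth of $\chi$. You instead run a weighted Caccioppoli estimate with the exponential weight $e^{-\beta|\cdot|}$: the sub-exponential hypothesis (applied with any $\delta < \beta$) makes the weighted quantity $\sum_x e^{-2\beta|x|}\chi(x)^2$ finite a priori, the cutoff at radius $N$ justifies the global energy identity with an $O(N^{-2})$ error that vanishes, and the $\beta^2$-term from the weight is absorbed into the zero-order term once $\beta$ is small compared to $\sqrt{\mu}$. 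Both proofs exploit the same mechanism --- the positive $\mu$-term dominates the boundary or weight error --- but yours is the classical Agmon-type exponential-weight argument, while the paper's is a shorter self-contained recursion that dispenses with any weight and cutoff bookkeeping. The paper's version is somewhat leaner to carry out rigorously in the discrete setting, while your approach is more standard PDE technology and would transfer readily to other contexts.
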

\begin{proof}
Since \eqref{greenpoint1} ensures exponential decay of the Green function, it is easy to see that the function $\chi$ written in \eqref{defphichi} is well-defined, of sub-exponential growth, and satisfies equation \eqref{eqphichi}. There remains to show uniqueness, and for this, it suffices to consider the case $f = 0$. Let $B_n = \{-n,\ldots,n\}^d$ be the box of size $n$. We have
\begin{equation}
\label{e:unique}
\sum_{x \in B_n} \chi(x)\Ll(\mu \chi(x) - \na^* \cdot A^\o(x) \na \chi(x) \Rr) = 0.
\end{equation}
We want to perform an integration by parts. The difference between 
$$
- \sum_{x \in B_n} \chi(x) \na^* \cdot A^\o(x) \na \chi(x)
$$
and 
\begin{equation}
\label{e:unique2}
\sum_{x \in B_n} \na \chi(x) \cdot A^\o(x) \na \chi(x)
\end{equation}
is bounded by
$$
\sum_{\substack{x \in B_n, y \in B_{n+1} \\ x \sim y}} |\chi(x)| \ |A^\o(y) \na \chi(y)| + |\chi(y)| \ |A^\o(x) \na \chi(x)|,
$$
which, up to a constant, is bounded by
$$
\sum_{x \in B_{n+2} \setminus B_{n-2}} \chi(x)^2.
$$
Hence, combining this estimate with \eqref{e:unique} and the fact that the term in \eqref{e:unique2} is positive, we obtain
$$
\mu \sum_{x \in B_n} \chi(x)^2 \le c \sum_{x \in B_{n+2} \setminus B_{n-2}} \chi(x)^2,
$$
for some constant $c > 0$. In a more condensed form, if we write 
$$
u_n =\sum_{x \in B_n} \chi(x)^2,
$$
we have thus obtained that $\mu \ u_n \le c (u_{n+2} - u_{n-2})$. Since $u_n$ is increasing, we have shown that 
$$
u_{n+2} \ge \Ll(1+\frac{\mu}{c}\Rr)u_{n-2}.
$$
If $u_n$ is not identically zero, then it must grow exponentially fast. But this would contradict the assumption that $\chi$ is of sub-exponential growth. Hence $u_n$ is identically zero, and so is $\chi$.
\end{proof}

%
%
%
%
%
%
%
\section{Approximating the homogenized matrix}
\label{s:loc}
Let $\mu > 0$. For every $p \in [0,1]$ and $\un{\o} \in \un{\O}$, we let $\phi_\mu\p(\cdot,\un{\o}) : \Z^d \to \R$ be the unique function of sub-exponential growth satisfying
\begin{equation}
\label{defphimu}
\mu \phi_\mu\p - \na^* \cdot A\p (\xi + \na \phi_\mu\p) = 0.
\end{equation}
We write $\phi_\mu\z$ for $\phi_\mu^{(0)}$. The introduction of the function $\phi_\mu\p$ is interesting for our purpose as an approximation of $\phi\p$. The former is more localized than $\phi\p$, in the sense that, roughly speaking, the value of $\phi_\mu\p$ at a point depends only on the values of the conductances in a box of size $\mu^{-1/2}$.

A standard energy estimate shows that $\E[|\na \phi_\mu\p|^2]$ is bounded uniformly over $\mu$ and $p$ (we recall that whenever we write an expression like $\E[ |\na \phi_\mu\p|^2 ]$, we understand that the function under the expectation is evaluated at $(0,\un{\o})$). We will need the following much more precise information on the corrector, which is borrowed from \cite[Proposition~2.1]{glotto}.
\begin{thm}[finite moments of the corrector, \cite{glotto}]
\label{t:corr-moments}
For every $k \in \N$, there exist constants $c$ and $r \ge 0$ such that for every $\mu \in (0,1/2]$ and $p \in [0,1]$, 
$$
\E\Ll[ | \phi_\mu\p |^k \Rr] \le 
c \left| 
\begin{array}{ll}
	 \log^r(\mu^{-1}) & \text{if } d = 2, \\
	1  & \text{if } d \ge 3.
\end{array}
\right.
$$
\end{thm}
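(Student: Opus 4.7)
The plan is to combine the Green function representation of $\phi_\mu\p$ with a concentration inequality for product measures and the averaged kernel estimates of Proposition~\ref{p:greenaverage}. Theorem~\ref{t:ex-un} applied to the equation $\mu \phi_\mu\p - \na^*\cdot A\p\na \phi_\mu\p = \na^* \cdot A\p \xi$ gives, after a discrete integration by parts,
$$
\phi_\mu\p(x) = -\sum_{y \in \Z^d} \na_2 G_\mu\p(x,y)\cdot A\p(y)\, \xi.
$$
Taking expectation in \eqref{defphimu} and using stationarity of $\un{\o}$ and of $\na\phi_\mu\p$, the random field $A\p(\xi + \na\phi_\mu\p)$ is itself stationary, so its discrete divergence has mean $0$; hence $\mu \E[\phi_\mu\p] = 0$ and $\E[\phi_\mu\p] = 0$, reducing the problem to bounding centred moments.

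For this we use the Efron--Stein inequality for the product measure $\P$: for every square-integrable $F(\un{\o})$,
$$
\E[(F - \E F)^2] \le c \sum_{e \in \B} \E[(\osc_e F)^2],
$$
where $\osc_e F$ denotes the oscillation of $F$ under resampling of the coordinates at $e$. Writing $\td{\phi}_\mu\p$ for the corrector in the environment obtained from $\o\p$ by resampling of $\o\p_e$, the difference $\phi_\mu\p - \td{\phi}_\mu\p$ solves an elliptic equation whose right-hand side is supported near $e$, and the Green function representation gives the pointwise sensitivity bound
$$
\osc_e \phi_\mu\p(0) \le c\, |\na G_\mu\p(0,e)|\, \Ll(|\xi| + |\na \phi_\mu\p(e)| + |\na \td{\phi}_\mu\p(e)|\Rr).
$$
Inserting this into the inequality above and applying Cauchy--Schwarz together with stationarity of $\na\phi_\mu\p$, one obtains
$$
\E[(\phi_\mu\p)^2] \le c\,\Ll(1+\E[|\na\phi_\mu\p|^4]\Rr)^{1/2} \sum_{e \in \B} \E[|\na G_\mu\p(0,e)|^4]^{1/2}.
$$
By Proposition~\ref{p:greenaverage} the summand decays like $(1\vee|e|)^{-2(d-1)}\, e^{-\td{c}\sqrt\mu |e|}$, which is summable (uniformly in $\mu$) for $d \ge 3$ and of order $\log \mu^{-1}$ for $d = 2$, the exponential cutoff at scale $\mu^{-1/2}$ converting the $|e|^{-2}$ divergence into a logarithm.

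To close the argument, uniform control on $\E[|\na\phi_\mu\p|^{2k}]$ for arbitrary $k$ is required. We obtain this by induction on $k$ using the same sensitivity scheme applied to $\na\phi_\mu\p$ instead of $\phi_\mu\p$; the analogous bound now features the second-order gradient $\na_1\na_2 G_\mu\p$, whose averaged $L^2$-decay $(1\vee|e|)^{-d}$ is summable in every dimension $d \ge 2$, so no logarithmic factor appears at the gradient level. An $L^q$-version of the Efron--Stein inequality, valid on product spaces, then propagates these $L^2$ bounds to arbitrary moments of $\na\phi_\mu\p$ and, applied in turn to $\phi_\mu\p$ itself, produces the claimed estimate on $\E[|\phi_\mu\p|^k]$. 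The main obstacle is precisely this bootstrap: at each inductive step one must avoid picking up negative powers of $\mu$, which forces the organisation of the estimates so that every $\na\phi_\mu\p$-factor is paired with a Green function kernel of strictly more decay than was consumed at the previous level, together with a careful bookkeeping of the logarithmic exponent $r$ in dimension two.
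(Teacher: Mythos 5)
Your proposal correctly identifies the overall strategy behind the result cited from \cite{glotto} (the paper quotes it without reproof): combine the Efron--Stein/spectral-gap inequality for the product measure with the sensitivity of the regularized corrector to a single conductance, expressed through the Green function. The centering $\E[\phi_\mu^{(p)}] = 0$ (via stationarity and the divergence form of the equation) and the oscillation bound $\osc_e \phi_\mu^{(p)}(0) \lesssim |\na_2 G_\mu^{(p)}(0,\un e)|\bigl(|\xi| + |\na\phi_\mu^{(p)}(\un e)| + |\na\td\phi_\mu^{(p)}(\un e)|\bigr)$ are both correct, and are indeed the starting points of the Gloria--Otto argument.

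The quantitative closure, however, has a genuine gap. After Cauchy--Schwarz you must sum $\E[|\na G_\mu^{(p)}(0,e)|^4]^{1/2}$ over $e\in\B$, and you evaluate the summand ``by Proposition~\ref{p:greenaverage}'' as if the $L^4$ norm of $\na G_\mu$ decayed at the rate $(1\vee|x|)^{-(d-1)}$ of the $L^2$ norm. But \eqref{greenaverage1} is only an $L^2$ estimate, and \eqref{greenaverage2} is only an $L^1$ estimate on $\na_1\na_2 G_\mu$. The only higher-integrability input available at this stage is the deterministic De~Giorgi--Nash bound \eqref{greenpoint2}, with its small exponent~$\alpha$; interpolating it against \eqref{greenaverage1} gives only $\E[|\na G_\mu(0,x)|^4]^{1/2}\lesssim (1\vee|x|)^{-(2d-3+\alpha)}$, and in $d=2$ the resulting sum is of order $\mu^{-(1-\alpha)/2}$, a polynomial divergence rather than a logarithm. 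The same issue resurfaces, and worsens, in your bootstrap for $\E[|\na\phi_\mu^{(p)}|^{2k}]$, which would require high moments of $\na_1\na_2 G_\mu$, of which only the $L^1$ control \eqref{greenaverage2} is known. Invoking Theorem~\ref{t:marott} here would be circular, since the annealed estimates of \cite{marott} are derived using the bounded-moment property of the corrector. The actual argument in \cite{glotto} avoids high-moment Green-function inputs through a self-improving scheme that couples the $L^q$ spectral gap with Caccioppoli-type elliptic estimates and a careful interpolation between the annealed $L^1$ and deterministic Nash bounds; your sketch flags that such ``bookkeeping'' is needed but does not supply the mechanism by which the losses are actually absorbed.
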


It is crucial for our subsequent arguments to have a good quantitative estimate on the difference between the homogenized matrix and the approximation obtained through $\phi_\mu\p$. The following result is what we need, and follows from \cite[Theorem~1]{glotto2} (or also from \cite{glotto} and \cite[Proposition~9.1 with $k = 1$]{vardecay}).

\begin{thm}[approximation of $\Ah$ based on $\phi_\mu$, \cite{glotto,vardecay, glotto2}] 
There exist constants $c$ and $r \ge 0$ such that for every $\mu \in (0,1/2]$ and $p \in [0,1]$,
$$
\Ll|\xi \cdot \Ah\p \xi - \E[\xi \cdot A\p(\xi + \na \phi_\mu\p)] \Rr| \le c
\left|
\begin{array}{ll}
	\mu \log^r(\mu^{-1}) & \text{if } d = 2, \\
	\mu  & \text{if } d \ge 3.
\end{array}
\right.
$$
\label{t:approx}
\end{thm}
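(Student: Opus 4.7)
The plan is to express the quantity
\begin{equation*}
E := \E[\xi \cdot A\p(\xi + \na \phi_\mu\p)] - \xi \cdot \Ah\p \xi
\end{equation*}
as a sum of two nonnegative terms, and then to estimate each separately. Set $u_\mu = \phi_\mu\p - \phi\p$; subtracting equations \eqref{defphip} and \eqref{defphimu} (written for $\o\p$) yields the elliptic relation $\na^* \cdot A\p \na u_\mu = \mu \phi_\mu\p$. Because $\phi_\mu\p$ is stationary with finite second moment (Theorem~\ref{t:corr-moments}), the following stationary integrations by parts are all legitimate: testing \eqref{defphimu} against $\phi_\mu\p$ gives $\E[\na \phi_\mu\p \cdot A\p (\xi + \na \phi_\mu\p)] = -\mu \E[(\phi_\mu\p)^2]$; testing \eqref{defphip} against the stationary function $\phi_\mu\p$ gives $\E[\na \phi_\mu\p \cdot A\p(\xi + \na \phi\p)] = 0$; and a cutoff argument justified by the sub-linear growth of $\phi\p$ gives the classical identity $\xi \cdot \E[A\p \na \phi\p] = -\E[\na \phi\p \cdot A\p \na \phi\p]$. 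Using the symmetry of $A\p$, elementary algebra combining these three identities produces
\begin{equation*}
E = \mu \E[(\phi_\mu\p)^2] + \E[\na u_\mu \cdot A\p \na u_\mu],
\end{equation*}
so $E \ge 0$ and the quantity we need to bound equals the right-hand side.

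The first term is immediately controlled by Theorem~\ref{t:corr-moments} with $k = 2$: it is at most $c \mu$ in dimension $d \ge 3$ and at most $c \mu \log^r(\mu^{-1})$ in $d = 2$.

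For the second term I would turn to a Green function representation. Adding $\mu u_\mu$ to the equation for $u_\mu$ recasts it as $\mu u_\mu - \na^* \cdot A\p \na u_\mu = -\mu \phi\p$. Since $\phi\p$ has sub-linear (hence sub-exponential) growth, Theorem~\ref{t:ex-un} yields
\begin{equation*}
\na u_\mu(0) = -\mu \sum_{y \in \Z^d} \na_1 G_\mu\p(0, y) \ \phi\p(y).
\end{equation*}
Squaring and taking expectation, one needs to estimate $\E[|\na u_\mu(0)|^2]$ and then use $\E[\na u_\mu \cdot A\p \na u_\mu] \le c_+ \E[|\na u_\mu|^2]$. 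The averaged gradient estimate \eqref{greenaverage1} bounds $\E[|\na G_\mu\p(0,y)|^2]^{1/2}$ by $c(1\vee|y|)^{-(d-1)} e^{-\td c\sqrt{\mu}|y|}$, whose sum in $y$ is $O(\mu^{-1/2})$ up to logs, while moments of $\phi\p(y)$ are controlled uniformly in $y$ (up to logarithmic factors in $d = 2$). The target bound $\E[|\na u_\mu|^2] \le c \mu$ (up to logs) would then follow by a weighted Cauchy--Schwarz argument on the representation above.

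The hard part is precisely this last step: the factors $\na_1 G_\mu\p(0,y)$ and $\phi\p(y)$ are strongly correlated, and a naive Cauchy--Schwarz based on the pointwise gradient estimate \eqref{greenpoint2} is too lossy in dimension two (it produces only a rate $\mu^\alpha$ for some small $\alpha > 0$). A sharp bound requires decorrelating these two factors, which is the purpose of the spectral gap / vertical derivative (Efron--Stein) machinery of \cite{glotto, glotto2}: the variance of $\na u_\mu(0)$ is controlled by a sum of squared Glauber sensitivities, in which the second-order averaged Green function bound \eqref{greenaverage2} enters. This is exactly the content of \cite[Theorem~1]{glotto2} (or, alternatively, \cite[Proposition~9.1]{vardecay} combined with \cite{glotto}), from which the desired estimate on $\E[\na u_\mu \cdot A\p \na u_\mu]$ follows and completes the proof.
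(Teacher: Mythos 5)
The paper does not prove this theorem; it is stated as a borrowed result, with the proof attributed to \cite[Theorem~1]{glotto2} (or alternatively to \cite{glotto} together with \cite[Proposition~9.1]{vardecay}). So there is no internal proof to compare you against, and ultimately you also defer to the same references for the hard estimate, which is consistent with the paper's treatment.

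That said, your decomposition is correct and worth commenting on. With $u_\mu = \phi_\mu\p - \phi\p$, the three variational identities you list (energy identity for $\phi_\mu\p$, testing the corrector equation against $\phi_\mu\p$, and the classical identity $\E[(\xi+\na\phi\p)\cdot A\p\na\phi\p]=0$, whose justification uses that $\na\phi\p$ lies in the closure $L^2_\na$ of exact gradients rather than a ``cutoff'' per se) combine exactly as you say to give
$$
\E[\xi\cdot A\p(\xi+\na\phi_\mu\p)] - \xi\cdot\Ah\p\xi = \mu\,\E[(\phi_\mu\p)^2] + \E[\na u_\mu\cdot A\p\na u_\mu],
$$
and the first term is indeed controlled directly by Theorem~\ref{t:corr-moments}. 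Your diagnosis of why the Green-function representation of $\na u_\mu$ does not close by elementary means is also accurate: the pointwise bound \eqref{greenpoint2} only yields a small fractional power of $\mu$, and the correlation between $\na_1 G_\mu\p(0,y)$ and $\phi\p(y)$ prevents the straightforward use of the averaged bound \eqref{greenaverage1}.

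The one weakness is structural rather than mathematical. The sources you invoke at the end (\cite[Theorem~1]{glotto2}, or \cite[Proposition~9.1]{vardecay} in its spectral form $\mu\int\lambda^{-1}(\mu+\lambda)^{-1}\,\d e_f(\lambda)$) estimate the full systematic error on the left-hand side directly; they do not isolate $\E[\na u_\mu\cdot A\p\na u_\mu]$ as a target. So once you cite them, your decomposition and the Green-function detour become superfluous as a proof: you end where the paper begins. The decomposition is still conceptually valuable (it shows the systematic error splits into the ``mass'' term $\mu\E[(\phi_\mu\p)^2]$ and the Dirichlet energy of the corrector discrepancy, both nonnegative), and it correctly identifies where the spectral-gap / vertical-derivative machinery is indispensable — but to present it as a proof you would either need to carry out the Efron--Stein estimate for $\var(\na u_\mu)$ yourself, or simply state the theorem as cited, as the paper does.
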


%
%
%
%
%
%
%
%
\section{Linear approximation of the corrector}
\label{s:linear}

Each edge $e$ can be written uniquely as $(z,z+\e_i)$ for some $z \in \Z^d$ and $1 \le i \le d$. We write $\un{e} = z$. 
We define $C^e : \Z^d \times \un{\Omega} \to \mcl{M}_d$ (the set of $d \times d$ matrices) by $C^e(x,\un{\o}) = 0$ (the matrix whose elements are all $0$) if $x \neq \un{e}$, and $C^e(\un{e},\un{\o})$ to be the diagonal matrix whose diagonal elements are all $0$ except the $i^\text{th}$ one, which is equal to ${\o}^{(1)}_e - \o_e$. As usual, we may keep the variable $\un{\o}$ implicit if clear from the context. 
We let $A^e = A^\circ + C^e$. In words, the environment corresponding to $A^e$ is perturbed only at the edge $e$. More generally, for every $E \subset \B$, we define
\begin{equation}
\label{defAE}
C^E = \sum_{e \in E} C^e \quad \text{ and } \quad A^E = A^\circ + C^E,
\end{equation}
$G_\mu^E$ for the Green function corresponding to the environment given by $A^E$, and $E\p = \{e \in \B : b\p_e = 1\}$. Note that by definition, $A\p = A^{E\p}$ and $G_\mu\p = G_\mu^{E\p}$. For every $E \subset \B$, we let $\phi^E_\mu : \Z^d \times \un{\Omega} \to \R$ be the unique function of sub-exponential growth such that
\begin{equation}
\label{defphiE}
\mu \phi^E_\mu - \na^* \cdot A^E (\xi + \na \phi^E_\mu) = 0,
\end{equation}
as given by Theorem~\ref{t:ex-un}, so that
\begin{equation}
\label{defphiE-green}
\phi^E_\mu(x) = \sum_{y \in \Z^d} G^E_\mu(x,y) \ \na^* \cdot A^E(y) \xi = - \sum_{y \in \Z^d} \na_2 G^E_\mu(x,y)  \cdot A^E(y) \xi,
\end{equation}
where the integration by part in the last equality is justified since $G$ decays exponentially fast, see \eqref{greenpoint1}. We have $\phi\p = \phi^{E\p}$.
We write $\phi^e_\mu$ as a shorthand for $\phi^{\{e\}}_\mu$. 
We also define
\begin{equation}
\label{deftdphi}
\td{\phi}^E_\mu = \phi\z_\mu + \sum_{e \in E} (\phi^e_\mu - \phi\z_\mu)
,
\end{equation}
(we will see shortly that this definition makes sense), and write $\td{\phi}\p_\mu$ for $\td{\phi}^{E\p}_\mu$. The purpose of this section is to prove the following result, which roughly speaking, states that in the limit of diluted perturbations, the perturbed corrector is close to its linear approximation.

\begin{thm}[linear approximation of the corrector]
\label{t:approxcorr}
There exist constants $c$, $\beta > 0$, $\gamma > 0$ such that for every $\mu \in (0,1/2]$ and $p \in [0,1]$,
$$
\E\Ll[\Ll|\na \phi\p_\mu - \na \td{\phi}\p_\mu \Rr|\Rr] \le c p^2 \mu^{-1+\beta} + e^{-\mu^{-\gamma}}.
$$
\end{thm}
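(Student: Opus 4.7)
The plan is to derive a PDE for the error $\Delta^E := \phi^E_\mu - \td{\phi}^E_\mu$, apply the Green-function representation of Theorem~\ref{t:ex-un} twice, and then exploit the independence of the Bernoulli variables $(b\p_e)$ across distinct edges to extract a factor of $p^2$. First, subtracting the equations \eqref{defphiE} for $\phi^E_\mu$ and for $\phi\z_\mu$ gives
$$\mu(\phi^E_\mu - \phi\z_\mu) - \na^* \cdot A\z \na(\phi^E_\mu - \phi\z_\mu) = \na^* \cdot C^E (\xi + \na \phi^E_\mu),$$
and using $C^E = \sum_{e \in E} C^e$ together with the analogous single-edge equations for each $\phi^e_\mu - \phi\z_\mu$ yields
$$\mu \Delta^E - \na^* \cdot A\z \na \Delta^E = \sum_{e \in E} \na^* \cdot C^e \, (\na \phi^E_\mu - \na \phi^e_\mu).$$
The essential feature is the cancellation of the $\xi$-term: the right-hand side vanishes whenever $|E| \le 1$, and this is the algebraic seed of the $p^2$ gain, as well as the reason $\td{\phi}^E_\mu$ is the correct linearization.

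Next, I would apply Theorem~\ref{t:ex-un} to represent $\Delta^E$ through $G\z_\mu$, sum by parts (justified by \eqref{greenpoint1} and the sub-exponential growth of the integrand), and use that $C^e$ is supported at $\un{e}$ to obtain
$$\na \Delta^E(0) = -\sum_{e \in E} \na_1 \na_2 G\z_\mu(0, \un{e}) \, C^e(\un{e}) \, [\na \phi^E_\mu(\un{e}) - \na \phi^e_\mu(\un{e})].$$
The difference $\phi^E_\mu - \phi^e_\mu$ itself solves
$$\mu(\phi^E_\mu - \phi^e_\mu) - \na^* \cdot A^e \na(\phi^E_\mu - \phi^e_\mu) = \na^* \cdot C^{E \setminus \{e\}} (\xi + \na \phi^E_\mu),$$
and representing it via $G^e_\mu$ yields
$$\na[\phi^E_\mu - \phi^e_\mu](\un{e}) = -\sum_{e'' \in E \setminus \{e\}} \na_1 \na_2 G^e_\mu(\un{e}, \un{e''}) \, C^{e''}(\un{e''}) \, [\xi + \na \phi^E_\mu(\un{e''})].$$
Substituting produces a double sum for $\na \Delta^E(0)$ indexed by pairs of distinct edges in $E$.

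Specializing $E = E\p$ and taking absolute values, one notes that $\1_{\{e \in E\p\}} \1_{\{e'' \in E\p\}}$ has expectation $p^2$ for $e \neq e''$, and that conditioning on $b\p_e = b\p_{e''} = 1$ amounts to adjoining $\{e, e''\}$ to $E\p$ without altering the law of the remaining Bernoullis. This yields
$$\E[|\na \Delta\p(0)|] \le c \, p^2 \sum_{e \neq e''} \E\bigl[|\na_1\na_2 G\z_\mu(0, \un{e})| \cdot |\na_1\na_2 G^e_\mu(\un{e}, \un{e''})| \cdot (1 + |\na \phi\p_\mu(\un{e''})|)\bigr],$$
where the Bernoulli set inside the expectation is implicitly augmented by $\{e, e''\}$. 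I would split the double sum at a scale $R := \mu^{-1/2 - \eta}$. On the bulk $\max(|\un{e}|, |\un{e} - \un{e''}|) \le R$, Cauchy--Schwarz combined with the averaged estimate \eqref{greenaverage2} and the energy bound $\E[|\na \phi\p_\mu|^2] \le c$ gives a contribution of order $c p^2 \mu^{-1+\beta}$, since the decay $1/(1 \vee |\cdot|)^d$ in \eqref{greenaverage2} makes the double sum converge polylogarithmically (without the exponential factor). On the complementary tail, the exponential decay $e^{-\td c \sqrt{\mu}|\cdot|}$ in the pointwise estimates \eqref{greenpoint1}--\eqref{greenpoint2}, absorbing the sub-exponential growth of $\na \phi$ given by Theorem~\ref{t:ex-un}, produces the claimed $e^{-\mu^{-\gamma}}$.

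The hardest step is the quantitative bookkeeping for the bulk double sum: \eqref{greenaverage2} only furnishes an $L^1$ bound on $|\na_1 \na_2 G|$, and the Green functions $G\z_\mu$ and $G^e_\mu$ share the background environment $\o\z$ so are not independent, meaning that separating the three factors via Cauchy--Schwarz requires an $L^2$-type upgrade, which would be obtained by combining the averaged bound with the pointwise \eqref{greenpoint2}. A secondary (minor) technical point is that $A^e$ is a deterministic single-edge modification of $A\z$ rather than a Bernoulli perturbation, so \eqref{greenaverage2} does not directly apply to $G^e_\mu$; however, $A^e$ is still uniformly elliptic and $\o\z$ is stationary, so the arguments of \cite{connadny,deldeu} carry over.
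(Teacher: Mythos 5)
Your Duhamel calculus is correct as far as the algebra goes, but it is routed differently from the paper's, and this difference creates an obstruction you do not address. The paper (Lemmas~\ref{l:one} and~\ref{l:several}) arrives at a double sum whose corrector factor is $\xi + \na\phi\z_\mu(\un{e})$, evaluated in the \emph{unperturbed} environment and hence independent of the Bernoulli set $E\p$; after conditioning on $E\p_\mu = E$, this factor is bounded directly by Theorem~\ref{t:corr-moments}. Your chain — inverting with $G\z_\mu$, then representing $\phi^E_\mu - \phi^e_\mu$ via $G^e_\mu$ — instead produces the factor $\xi + \na\phi^E_\mu(\un{e''})$, which depends on the full perturbed set. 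After conditioning on $e,e'' \in E\p$, the corrector that appears is $\na\phi^{E\p\cup\{e,e''\}}_\mu$, and the stationary bound $\E[|\na\phi\p_\mu|^2]\le c$ you invoke does \emph{not} apply: the environment with two edges forced into the perturbed state is no longer stationary, and neither the energy estimate nor Theorem~\ref{t:corr-moments} is available for it as stated. Closing this would require a separate comparison argument controlling $\na\phi^{E\cup\{e\}}_\mu - \na\phi^E_\mu$ (a rank-one analogue of Lemma~\ref{l:one}) applied twice. The paper avoids this entirely by representing $\ov\phi^E_\mu$ via the \emph{perturbed} Green function $G^E_\mu$: that factor is $E\p$-dependent, but is disposed of by the pointwise bound \eqref{greenpoint2}, while all the factors needing moment estimates ($G^e_\mu$ and $\na\phi\z_\mu$) are $E\p$-independent.

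Two secondary points. The ``$L^2$ upgrade'' you flag as the hardest step is precisely what the paper's Hölder inequality in \eqref{e:mcle} does: rather than a symmetric Cauchy--Schwarz split of the two Green-function factors (which stalls on their mutual dependence through $\omega\z$), it bounds the outer $\na_1\na_2 G\p_\mu(0,\un{e}')$ pointwise by $(1\vee|\un{e}'|)^{-(d-2+\alpha)}$ and controls the $1/\zeta$-moment of the inner $\na_1\na_2 G^e_\mu$ via Lemma~\ref{l:greenperturbed}, giving decay $(1\vee|\un{e}-\un{e}'|)^{-\zeta d}$; the spatial sum over $B_{\mu^{-\gamma}}$ is then $O(\mu^{-\gamma(d(1-\zeta)+2-\alpha)})$, which beats $\mu^{-1}$ for $\zeta$ close to $1$ and $\gamma$ close to $1/2$, yielding the advertised $\beta>0$. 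Finally, your claim that the annealed estimates of \cite{connadny,deldeu} ``carry over'' to $G^e_\mu$ is not what the paper does: Lemma~\ref{l:greenperturbed} instead uses the rank-one identity \eqref{relovG} to write $\na_1\na_2(G^e_\mu - G\z_\mu)$ as a uniformly bounded factor times $\na_1\na_2 G\z_\mu$, so no new annealed estimate for the non-stationary environment $A^e$ is needed.
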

\begin{rem}
We will see in section~\ref{s:error} that the upper bound can be improved using the recent results of \cite{marott}, see Theorem~\ref{t:sharp}.
\end{rem}
We start with several lemmas concerning the effect of perturbations on the corrector or the Green function.
\begin{lem}[perturbation at one edge]
\label{l:one}
Let
\begin{equation}
\label{defovphie}
\ov{\phi}^e_\mu = \phi^e_\mu - \phi\z_\mu.
\end{equation}
For every $\mu > 0$ and $x \in \Z^d$, we have
$$
\ov{\phi}^e_\mu(x) =- \na_2 G^e_\mu(x,\un{e}) \cdot C^e(\un{e}) (\xi + \na \phi\z_\mu(\un{e})).
$$
\end{lem}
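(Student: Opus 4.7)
The plan is to derive a linear equation for $\ov{\phi}^e_\mu$ whose source is explicit, and then invoke the Green-function representation of Theorem~\ref{t:ex-un}. Subtracting the defining equation \eqref{defphiE} for $E = \{e\}$ from the same equation for $E = \emptyset$ yields
$$
\mu \ov{\phi}^e_\mu - \na^* \cdot \bigl[A^e(\xi + \na \phi^e_\mu) - A\z(\xi + \na \phi\z_\mu)\bigr] = 0.
$$
The key algebraic step is to rewrite the bracketed difference, using $A^e = A\z + C^e$, in the form
$$
A^e(\xi + \na \phi^e_\mu) - A\z(\xi + \na \phi\z_\mu) = A^e \na \ov{\phi}^e_\mu + C^e(\xi + \na \phi\z_\mu),
$$
thereby putting the perturbed operator on the unknown and the residual tensor $C^e$ against the \emph{unperturbed} field $\xi + \na \phi\z_\mu$. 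This gives
$$
\mu \ov{\phi}^e_\mu - \na^* \cdot A^e \na \ov{\phi}^e_\mu = \na^* \cdot \bigl[C^e(\xi + \na \phi\z_\mu)\bigr].
$$

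Once this equation is in hand, the rest is mechanical. The right-hand side is finitely supported, since $y \mapsto C^e(y)$ vanishes outside $\un{e}$, so it is trivially of sub-exponential growth; and $\ov{\phi}^e_\mu$ is itself of sub-exponential growth as the difference of two such functions. Theorem~\ref{t:ex-un} applied to the operator $\mu - \na^* \cdot A^e \na$ therefore yields
$$
\ov{\phi}^e_\mu(x) = \sum_{y \in \Z^d} G^e_\mu(x,y) \, \na^* \cdot \bigl[C^e(\xi + \na \phi\z_\mu)\bigr](y).
$$
A discrete integration by parts in $y$, legitimate because $G^e_\mu(x,\cdot)$ decays exponentially by \eqref{greenpoint1} (or simply because the integrand in $y$ has compact support once the adjoint falls on $G$), produces
$$
\ov{\phi}^e_\mu(x) = -\sum_{y \in \Z^d} \na_2 G^e_\mu(x,y) \cdot C^e(y) (\xi + \na \phi\z_\mu(y)),
$$
and only the single term $y = \un{e}$ survives, which is the claimed identity.

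There is no real obstacle here. The one point that requires care is the algebraic rearrangement above: it is precisely the choice to keep $A^e$ on the left and to pair $C^e$ with $\xi + \na \phi\z_\mu$ that makes the perturbed Green function $G^e_\mu$ appear together with the unperturbed corrector $\phi\z_\mu$ in the final formula. This shape is exactly what will be needed in the subsequent proof of Theorem~\ref{t:approxcorr}, where one must estimate $\na \phi\p_\mu - \na \td{\phi}\p_\mu$ as a sum of localized perturbations attached to the unperturbed environment.
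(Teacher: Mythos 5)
Your proposal is correct and takes essentially the same route as the paper: both derive the equation $\mu \ov{\phi}^e_\mu - \na^* \cdot A^e \na \ov{\phi}^e_\mu = \na^* \cdot C^e(\xi + \na \phi\z_\mu)$ (the paper by rewriting the $\phi\z_\mu$ equation with $A^e = A\z + C^e$ and then subtracting, you by subtracting first and rearranging), and then apply Theorem~\ref{t:ex-un} plus a finite integration by parts. The only slip is a wording one — you say you subtract ``the equation for $E=\{e\}$ from that for $E=\emptyset$'' while your displayed equation is the reverse difference — but the mathematics is right.
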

\begin{proof}
In view of equation \eqref{defphimu} satisfied by $\phi_\mu\z$ and of the identity $A^e = A\z + C^e$, we infer that
$$
\mu \phi\z_\mu - \na^* \cdot A^e (\xi + \na \phi\z_\mu) = - \na^* \cdot C^e (\xi + \na \phi\z_\mu).
$$
This and equation \eqref{defphiE} satisfied by $\phi_\mu^e$ lead to
\begin{equation}
\label{propovphie}
\mu \ov{\phi}^e_\mu - \na^* \cdot A^e \na \ov{\phi}^e_\mu = \na^* \cdot C^e (\xi + \na \phi\z_\mu).
\end{equation}
The function on the right-hand side above is non-zero only at a finite number of points, so it is clearly of sub-exponential growth. The function $\ov{\phi}^e_\mu$ is also of sub-exponential growth by \eqref{defphiE-green} and \eqref{greenpoint1}. By Theorem~\ref{t:ex-un}, 
\begin{eqnarray*}
\ov{\phi}^e_\mu(x) & = & \sum_y G^e_\mu(x,y) \ \na^*\cdot C^e (\xi + \na \phi\z_\mu)(y) \\
& = & -\sum_y \na_2 G^e_\mu(x,y) \cdot C^e(y) (\xi + \na \phi\z_\mu(y)) \\
& = & -\na_2 G^e_\mu(x,\un{e}) \cdot C^e(\un{e}) (\xi + \na \phi\z_\mu(\un{e})),
\end{eqnarray*}
where in the second equality, the integration by parts is justified since only a finite number of terms in the sum are non-zero.
\end{proof}
\begin{rem}
\label{r:deftdphi}
As was recalled in the beginning of section~\ref{s:loc}, the gradient of $\phi_\mu\z$ is square-integrable: $\E[|\na \phi\z_\mu|^2] < \infty$. hence, by the ergodic theorem, 
\begin{equation}
\label{e:ergodic}
\frac{1}{|B_n|} \sum_{x \in B_n} |\na \phi_\mu\z(x)| \xrightarrow[n \to \infty]{\text{a.s.}} \E[|\na \phi_\mu\z|] < +\infty.
\end{equation}
It thus follows from Lemma~\ref{l:one} and the decay of the Green function \eqref{greenpoint1} that for every $x$, $\ov{\phi}_\mu^e(x)$ decays exponentially fast with the distance from $x$ to the edge $e$. In particular, for every $E \subset \B$, the function~$\td{\phi}_\mu^E$ defined in \eqref{deftdphi} is always well-defined and of sub-exponential growth.
\end{rem}

\begin{lem}[Perturbation at several edges]
\label{l:several}
Let $E \subset \B$, and let
\begin{equation}
\label{defovphip}
\ov{\phi}^E_\mu = \phi^E_\mu - \td{\phi}^E_\mu.
\end{equation}
For every $\mu > 0$ and $x \in \Z^d$, we have
$$
\ov{\phi}^E_\mu(x) = - \sum_{e \neq e' \in E} \na_2 G^E_\mu(x,\un{e}') \cdot C^{E\setminus \{e \}}(\un{e}') \na \ov{\phi}^e_\mu(\un{e}').
$$
\end{lem}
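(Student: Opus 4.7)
The strategy is to identify the elliptic equation (with zero-order term $\mu$) that $\ov{\phi}^E_\mu = \phi^E_\mu - \td{\phi}^E_\mu$ satisfies and then to invert it via Theorem~\ref{t:ex-un}. Since each $C^e$ vanishes outside the single vertex $\un{e}$, the source will be finitely supported, so the resulting Green-function representation reduces to a finite sum and the subsequent discrete integration by parts involves no boundary term.

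The key step is the computation of the defect
\[
\mu \td{\phi}^E_\mu - \na^* \cdot A^E (\xi + \na \td{\phi}^E_\mu).
\]
Starting from \eqref{defphimu} for $\phi\z_\mu$ and \eqref{propovphie} rewritten, via $A^e = A\z + C^e$, as
\[
\mu \ov{\phi}^e_\mu = \na^* \cdot A\z \na \ov{\phi}^e_\mu + \na^* \cdot C^e(\xi + \na \phi^e_\mu),
\]
I would sum over $e \in E$ and add the equation for $\phi\z_\mu$. The $A\z$-contributions then assemble into $\na^* \cdot A\z(\xi + \na \td{\phi}^E_\mu)$. Subtracting off $\na^* \cdot A^E(\xi + \na \td{\phi}^E_\mu) = \na^* \cdot A\z(\xi + \na \td{\phi}^E_\mu) + \sum_{e \in E} \na^* \cdot C^e(\xi + \na \td{\phi}^E_\mu)$ and using the identity $\phi^e_\mu - \td{\phi}^E_\mu = -\sum_{e' \in E,\, e' \neq e} \ov{\phi}^{e'}_\mu$ (immediate from \eqref{deftdphi}), the single-edge contributions cancel exactly, leaving only the cross-terms
\[
\mu \td{\phi}^E_\mu - \na^* \cdot A^E(\xi + \na \td{\phi}^E_\mu) = - \sum_{e \neq e' \in E} \na^* \cdot C^e \na \ov{\phi}^{e'}_\mu.
\]
Subtracting this identity from the defining equation of $\phi^E_\mu$ (cf.~\eqref{defphiE}) then gives
\[
\mu \ov{\phi}^E_\mu - \na^* \cdot A^E \na \ov{\phi}^E_\mu = \sum_{e \neq e' \in E} \na^* \cdot C^e \na \ov{\phi}^{e'}_\mu.
\]

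To conclude, I would observe that $\ov{\phi}^E_\mu$ is of sub-exponential growth (by \eqref{defphiE-green}, \eqref{greenpoint1}, and Remark~\ref{r:deftdphi}) and that the source on the right-hand side is finitely supported. Theorem~\ref{t:ex-un} then applies and yields
\[
\ov{\phi}^E_\mu(x) = \sum_{y \in \Z^d} G^E_\mu(x,y) \sum_{e \neq e' \in E} \na^* \cdot C^e(y) \na \ov{\phi}^{e'}_\mu(y).
\]
A discrete integration by parts in $y$ (no boundary contribution, since the source vanishes outside $\{\un{e} : e \in E\}$) transfers the backward divergence onto $\na_2 G^E_\mu(x, \cdot)$. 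Relabelling the summation indices and grouping together, for each fixed derivative edge, all perturbation matrices attached to a common vertex $\un{e'}$ into the combined matrix $C^{E \setminus\{e\}}(\un{e'})$ then produces the displayed formula.

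The main (modest) obstacle is the defect computation: one must carefully track three kinds of contributions---the $A\z$-pieces (which assemble into $\na^*\cdot A\z(\xi+\na\td{\phi}^E_\mu)$), the single-edge $C^e$-pieces (which must cancel exactly), and the surviving cross-terms---and verify that the cancellation is clean. The subsequent Green-function inversion and the discrete integration by parts are essentially bookkeeping once the correct source has been identified.
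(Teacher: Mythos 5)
Your proof is correct and follows essentially the same route as the paper: identify the elliptic equation with zero-order term $\mu$ that $\ov{\phi}^E_\mu$ solves, invert it via Theorem~\ref{t:ex-un}, and integrate by parts using the finite support of the $C^e$. The only difference is cosmetic bookkeeping in the defect computation (you sum the single-edge equations written against $A\z$ and then switch the operator to $A^E$, whereas the paper rewrites each single-edge equation and the $\phi\z_\mu$-equation directly against $A^E$ before summing), but both yield the identical source $\sum_{e\neq e'\in E}\na^*\cdot C^{e'}\na\ov{\phi}^e_\mu$.
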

\begin{proof}
If $e \in E$, then using \eqref{propovphie} and the fact that $A^E = A^e + C^{E\setminus\{e\}}$, we have
$$
\mu \ov{\phi}^e_\mu - \na^* \cdot A^E \na \ov{\phi}^e_\mu = \na^* \cdot C^e (\xi + \na \phi\z_\mu) - \na^* \cdot C^{E\setminus \{e \}} \na \ov{\phi}^e_\mu.
$$
Similarly, we have
$$
\mu \phi\z_\mu - \na^* \cdot A^E (\xi + \na \phi\z_\mu) = - \na^* \cdot C^E (\xi + \na \phi\z_\mu).
$$
Recalling that
$$
\ov{\phi}^E_\mu = \phi^E_\mu - \phi\z_\mu - \sum_{e \in E} \ov{\phi}^e_\mu,
$$
that $C^E = \sum_{e \in E} C^e$, and equation \eqref{defphiE} satisfied by $\phi^E_\mu$, we obtain
$$
\mu \ov{\phi}^E_\mu - \na^* \cdot A^E \na \ov{\phi}^E_\mu = \sum_{e \in E} \na^* \cdot C^{E\setminus \{e \}} \na \ov{\phi}^e_\mu.
$$
By Remark~\ref{r:deftdphi}, the function $\ov{\phi}^E_\mu$ is of sub-exponential growth. Moreover, arguing as in this remark, we see that the function in the right-hand side above is also of sub-exponential growth. By Theorem~\ref{t:ex-un}, we thus have
\begin{eqnarray*}
\ov{\phi}^E_\mu(x) & = & \sum_{e \in E} \sum_{y \in \Z^d}  G^E_\mu(x,y) \ \na^* \cdot C^{E\setminus \{e \}} \na \ov{\phi}^e_\mu(y) \\
& = & - \sum_{e \in E} \sum_{y \in \Z^d} \na_2 G^E_\mu(x,y) \cdot C^{E\setminus \{e \}}(y) \na \ov{\phi}^e_\mu(y) \\
& = & - \sum_{e \neq e' \in E} \na_2 G^E_\mu(x,\un{e}') \cdot C^{E\setminus \{e \}}(\un{e}') \na \ov{\phi}^e_\mu(\un{e}'),
\end{eqnarray*}
where the integration by parts in the second equality is justified using the exponential decay of the Green function and the fact that $y \mapsto |\ov{\phi}^e_\mu(y)|$ is of sub-exponential growth, see Lemma~\ref{l:one} and \eqref{e:ergodic}.
\end{proof}
\begin{lem}[averaged control of the perturbed Green function]
For every $\zeta \in (0,1]$, there exists $c > 0$ such that for every $\mu \in (0,1/2]$ and $e,e' \in \B$,
$$
\E\Ll[ | \na_1 \na_2 G^e_\mu(\un{e}',\un{e})|^{1/\zeta}  \Rr] \le \frac{c}{(1\vee |\un{e}-\un{e}'|)^d}.
$$
\label{l:greenperturbed}
\end{lem}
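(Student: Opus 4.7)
The plan is to reduce the estimate for the singly-perturbed Green function $G^e_\mu$ to an estimate for $G\z_\mu$ via a resolvent identity, and then to obtain the $L^{1/\zeta}$ bound by an $L^1$--$L^\infty$ interpolation, using the averaged estimate \eqref{greenaverage2} at $p=0$ together with the pointwise gradient bound \eqref{greenpoint2}.

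Since $A^e = A\z + C^e$ with $C^e$ supported at the single vertex $\un{e}$, the standard resolvent identity combined with a summation by parts yields
$$G^e_\mu(x,y) = G\z_\mu(x,y) - \na_2 G\z_\mu(x,\un{e}) \cdot C^e(\un{e}) \, \na_1 G^e_\mu(\un{e},y).$$
Applying $\na_1$ in $x$ and $\na_2$ in $y$, and evaluating at $(x,y) = (\un{e}',\un{e})$, one obtains the matrix identity
$$\na_1 \na_2 G^e_\mu(\un{e}',\un{e}) = \na_1 \na_2 G\z_\mu(\un{e}',\un{e}) \bigl(I - C^e(\un{e}) \na_1 \na_2 G^e_\mu(\un{e},\un{e})\bigr).$$
The factor $\na_1 \na_2 G^e_\mu(\un{e},\un{e})$ is a mixed second difference within a single lattice cell; writing it as a difference of two first gradients and applying \eqref{greenpoint2} (which holds pointwise for every environment) gives $|\na_1 G^e_\mu(\un{e},y)| \le c$ for $y \in \{\un{e},\un{e}+\e_j\}$ uniformly in $\mu$ and $\un{\o}$, so $|\na_1 \na_2 G^e_\mu(\un{e},\un{e})| \le 2c$. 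Combined with $|C^e(\un{e})| \le c_+ - c_-$, this yields the deterministic pointwise comparison
$$|\na_1 \na_2 G^e_\mu(\un{e}',\un{e})| \le C_0 \, |\na_1 \na_2 G\z_\mu(\un{e}',\un{e})|,$$
with $C_0$ depending only on $d$ and the ellipticity constants.

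The same application of \eqref{greenpoint2} shows that $|\na_1 \na_2 G\z_\mu(\un{e}',\un{e})|$ is bounded by a constant $C_1$ uniformly in $\mu$, $\un{\o}$ and $\un{e},\un{e}'$. For any nonnegative random variable $X \le C_1$ and $\zeta \in (0,1]$, one has $X^{1/\zeta} = X^{1/\zeta - 1} X \le C_1^{1/\zeta - 1} X$, hence $\E[X^{1/\zeta}] \le C_1^{1/\zeta - 1} \E[X]$. Applying this with $X = |\na_1 \na_2 G\z_\mu(\un{e}',\un{e})|$ and using \eqref{greenaverage2} at $p=0$,
$$\E\bigl[|\na_1 \na_2 G\z_\mu(\un{e}',\un{e})|^{1/\zeta}\bigr] \le C_1^{1/\zeta - 1} \E\bigl[|\na_1 \na_2 G\z_\mu(\un{e}',\un{e})|\bigr] \le \frac{c}{(1\vee|\un{e}-\un{e}'|)^d}.$$
Raising the pointwise comparison to the power $1/\zeta$ and taking expectation yields the claim.

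The main subtlety I expect is that the environment $A^e$ is \emph{not} stationary, since the conductance law at edge $e$ differs from that at the other edges; hence Proposition~\ref{p:greenaverage} cannot be invoked directly for $G^e_\mu$. The role of the resolvent identity is precisely to reduce to the stationary case $p=0$, exploiting both that the perturbation is localized at a single vertex and that the local factor $\na_1 \na_2 G^e_\mu(\un{e},\un{e})$ is controlled by the deterministic estimate \eqref{greenpoint2}.
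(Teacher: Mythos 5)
Your proof is correct and follows essentially the same route as the paper: a resolvent identity reduces $\na_1\na_2 G^e_\mu(\un{e}',\un{e})$ to $\na_1\na_2 G\z_\mu(\un{e}',\un{e})$ times a factor controlled deterministically by \eqref{greenpoint2}, and the $L^{1/\zeta}$ bound for the unperturbed case follows from the $L^1$ estimate \eqref{greenaverage2} together with uniform boundedness. (Incidentally, your sign in the resolvent identity is the correct one — the paper drops a minus sign in the analogous display — but this is harmless since only absolute values enter.)
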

\begin{proof}
As a first step, we show that the lemma is true if the perturbed Green function $G^e_\mu$ is replaced by the non-perturbed one $G\z_\mu$. 

We know from \eqref{greenaverage2} that
$$
\E\Ll[ | \na_1 \na_2 G\z_\mu(\un{e}',\un{e})| \Rr] \le \frac{c}{(1\vee |\un{e}-\un{e}'|)^d}.
$$
Moreover, by \eqref{greenpoint2}, the quantity $\na_2G^\o_\mu(x,y)$ is bounded uniformly over $\o \in \O$, $x,y \in \Z^d$ and $\mu > 0$. In particular, the random variable 
$$
| \na_1 \na_2 G\z_\mu(\un{e}',\un{e})|
$$
is bounded. Hence, the lemma is true if $G^e_\mu$ is replaced by $G\z_\mu$.


Let us write $\ov{G}^e_\mu = G^e_\mu - G\z_\mu$. In order to prove the lemma, it now suffices to argue that 
$$
\E\Ll[ | \na_1 \na_2 \ov{G}^e_\mu(\un{e}',\un{e})|^{1/\zeta}  \Rr]\le \frac{c}{(1\vee |\un{e}-\un{e}'|)^d}.
$$
Let $x \in \Z^d$. The Green function $G\z_\mu(x,\cdot)$ satisfies
$$
\mu G\z_\mu(x,\cdot) -\na^* \cdot A\z \na G\z_\mu(x,\cdot) = \1_x.
$$
The same equation holds for $G^e_\mu(x,\cdot)$ provided $A\z$ is replaced by $A^e = A\z + C^e$, hence
$$
\mu G^e_\mu(x,\cdot) -\na^* \cdot A\z \na G^e_\mu(x,\cdot) = \1_x + \na^* \cdot C^e \na G^e_\mu(x,\cdot).
$$
Combining the two previous identities, we obtain that
$$
\mu \ov{G}^e_\mu(x,\cdot) - \na^* \cdot A\z \na \ov{G}^e_\mu(x,\cdot) = \na^* \cdot C^e \na G^e_\mu(x,\cdot).
$$
By Theorem~\ref{t:ex-un}, we are led to
\begin{eqnarray*}
\ov{G}^e_\mu(x,y) & = & \sum_{z \in \Z^d} G\z_\mu(y,z) \ \na^* \cdot C^e \na G^e_\mu(x,\cdot)(z) \\
& = & - \sum_{z \in \Z^d} \na_2 G\z_\mu(y,z) \ \cdot C^e \na G^e_\mu(x,\cdot)(z) \\
& = & \na_2G\z_\mu(y,\un{e}) \ \cdot C^e(\un{e}) \na_2 G^e_\mu(x,\un{e}),
\end{eqnarray*}
and by symmetry of the Green functions, we also have 
\begin{equation}
\label{eqMO}
\ov{G}^e_\mu(x,y) = \na_2G\z_\mu(x,\un{e}) \ \cdot C^e(\un{e}) \na_2 G^e_\mu(y,\un{e}).
\end{equation}
It thus follows that
\begin{equation}
\label{relovG}
\na_1 \na_2\ov{G}^e_\mu(x,y) = C^e(\un{e}) \Ll(\na_1 \na_2 G^e_\mu(y,\un{e})\Rr) \ \Ll(\na_1 \na_2G\z_\mu(x,\un{e})\Rr),
\end{equation}
which we need to evaluate for $x = \un{e}'$ and $y = \un{e}$. Since, by \eqref{greenpoint2},
$$
\Ll|  C^e(\un{e}) \na_1 \na_2 G^e_\mu(\un{e},\un{e}) \Rr|
$$
is bounded uniformly over $\mu$ and $\un{\o}$, it suffices to see that
$$
\E\Ll[ |\na_1 \na_2G\z_\mu(\un{e}',\un{e})|^{1/\zeta} \Rr] \le \frac{c}{(1\vee |\un{e}-\un{e}'|)^d},
$$
but this was obtained during the first step of this proof, so we are done.
\end{proof}
\begin{proof}[Proof of Theorem~\ref{t:approxcorr}]
Note first that
$$
\E\Ll[\Ll|\na \phi\p_\mu - \na \td{\phi}\p_\mu \Rr|\Rr] = \E\Ll[\Ll|\na \ov{\phi}\p_\mu\Rr|\Rr],
$$
where as usual, we write $\ov{\phi}\p_\mu = \ov{\phi}^{E\p}_\mu$. By Lemma~\ref{l:several}, we have
\begin{equation*}
\na \ov{\phi}\p_\mu(0) = - \sum_{e \neq e' \in E\p} \Ll(\na_1 \na_2 G\p_\mu(0,\un{e}') \Rr) \ C^{E\p\setminus \{e \}}(\un{e}') \na \ov{\phi}^e_\mu(\un{e}'),
\end{equation*}
so we have the bound
\begin{equation*}
|\na \ov{\phi}\p_\mu(0)| \le c_+ \sum_{e \neq e' \in E\p} |\na_1 \na_2 G\p_\mu(0,\un{e}')| \ |\na \ov{\phi}^e_\mu(\un{e}')|.
\end{equation*}
By Lemma~\ref{l:one}, the right-hand side above is bounded, up to a constant, by
\begin{equation}
\label{e:gradovphip}
\sum_{e \neq e' \in E\p}| \na_1 \na_2 G\p_\mu(0,\un{e}')| \ | \na_1 \na_2 G^e_\mu(\un{e}',\un{e})| \ |\xi + \na \phi\z_\mu(\un{e})|.
\end{equation}

Let $\gamma > 1/2$, and write $E\p_\mu = E\p \cap B_{\mu^{-\gamma}}$, where we recall that $B_n = \{-n, \ldots, n\}^d$. We will argue that in the above sum, the only terms which truly contribute to the sum are those for which $e,e' \in E\p_\mu$. In order to do so, it is convenient to introduce $\td{\gamma}$ such that $1/2 < \td{\gamma} < \gamma$, and to define $\td{E}\p_\mu = E\p \cap B_{\mu^{-\td{\gamma}}}$.

Recall that $\E[|\na \phi\z_\mu(\un{e})|] \le \E[|\na \phi\z_\mu(\un{e})|^2]^{1/2}$ does not depend on $e$ and is bounded uniformly over $\mu > 0$. We also have from \eqref{greenpoint2} that for every $E \subset \B$,
\begin{equation}
\label{greepointsimpl}
|\na_2 G^E_\mu(x,y)| \le c \ e^{-\td{c} \sqrt{\mu} |y-x|},
\end{equation}
with $\td{c} > 0$. Hence, up to a constant, the expectation
\begin{equation}
\label{e:gradovphip2}
\E\Ll[ \sum_{e' \in E\p \setminus \td{E}\p_\mu} \ \sum_{e \in E\p, e \neq e'}  | \na_1 \na_2 G\p_\mu(0,\un{e}')| \ | \na_1 \na_2 G^e_\mu(\un{e}',\un{e})| \ |\xi + \na \phi\z_\mu(\un{e})| \Rr]
\end{equation}
is bounded by
$$
\sum_{z' \in \Z^d \setminus B_{\mu^{-\td{\gamma}}}} \ \sum_{z \in \Z^d} e^{-\td{c}\sqrt{\mu} |z'|} \ e^{-\td{c}\sqrt{\mu} |z-z'|}.
$$
Summation over $z$ gives a contribution of order $\mu^{-d/2}$, while the remaining summation over $z'$ decays as $e^{-\mu^{-\gamma'}}$ for every $\gamma' < \td{\gamma}-1/2$. We have thus justified that the difference between the expectation of \eqref{e:gradovphip} and \eqref{e:gradovphip2} is $O(e^{-\mu^{-\gamma'}})$ for every $\gamma' < \td{\gamma}-1/2$. We can thus focus on studying
$$
\E\Ll[ \sum_{e' \in  \td{E}\p_\mu, e \in E\p, e \neq e'}  | \na_1 \na_2 G\p_\mu(0,\un{e}')| \ | \na_1 \na_2 G^e_\mu(\un{e}',\un{e})| \ |\xi + \na \phi\z_\mu(\un{e})| \Rr].
$$
A similar reasoning enables to show that in the above sum, the terms for which $e \notin E\p_\mu$ can be discarded. Indeed, the contribution of those terms can be bounded, up to a constant, by
\begin{equation}
\label{eq1234}
\sum_{z' \in B_{\mu^{-\td{\gamma}}}} \ \sum_{z \in \Z^d \setminus B_{\mu^{-\gamma}}} e^{-\td{c} \sqrt{\mu} |z'|} \ e^{-\td{c} \sqrt{\mu} |z-z'|}.
\end{equation}
Since $\td{\gamma} < \gamma$, the distance between $z$ and $z'$ in any term of this double sum is always greater than $\mu^{-\gamma}/2$. Hence, we obtain an upper bound for the inner sum in \eqref{eq1234} if we sum over all $z$ such that $|z-z'| \ge \mu^{-\gamma}/2$. This gives a sum of order $O(e^{-\mu^{-\gamma'}})$ for some $\gamma' > 0$, and hence a similar bound holds for the total sum in \eqref{eq1234}. We have thus argued that it suffices to study
$$
\E\Ll[ \sum_{e' \in  \td{E}\p_\mu, e \in E\p_\mu, e \neq e'}  | \na_1 \na_2 G\p_\mu(0,\un{e}')| \ | \na_1 \na_2 G^e_\mu(\un{e}',\un{e})| \ |\xi + \na \phi\z_\mu(\un{e})| \Rr].
$$
Finally (in order to keep light notations), arguing as in the first step, we see that we can as well consider the sum as ranging over all $e, e' \in E\p_\mu$, $e \neq e'$. 

We now decompose the expectation into
\begin{multline}
\label{e:thestep}
\E\Ll[ \sum_{e\neq e' \in  E\p_\mu}  | \na_1 \na_2 G\p_\mu(0,\un{e}')| \ | \na_1 \na_2 G^e_\mu(\un{e}',\un{e})| \ |\xi + \na \phi\z_\mu(\un{e})| \Rr] \\
= \sum_E \sum_{e\neq e' \in E} \mcl{E}(e,e',E) \ \P[E\p_\mu = E],
\end{multline}
where in the first sum, $E$ ranges over all possible subsets of $B_{\mu^{-\gamma}}$, and where we write
$$
\mcl{E}(e,e',E) = \E\Ll[ | \na_1 \na_2 G\p_\mu(0,\un{e}')| \ | \na_1 \na_2 G^e_\mu(\un{e}',\un{e})| \ |\xi + \na \phi\z_\mu(\un{e})| \ \Big| \ E\p_\mu = E\Rr].
$$
For any $\zeta\in (0,1)$, we have by H\"older's inequality
\begin{multline}
\label{e:mcle}
\mcl{E}(e,e',E) \le \E\Ll[ \Ll(| \na_1 \na_2 G\p_\mu(0,\un{e}')| \ | \na_1 \na_2 G^e_\mu(\un{e}',\un{e})|\Rr)^{1/\zeta} \ \Big| \ E\p_\mu = E\Rr]^\zeta \\
\E\Ll[ |\xi + \na \phi\z_\mu(\un{e})|^{1/(1-\zeta)} \ \Big| \ E\p_\mu = E\Rr]^{1-\zeta}.
\end{multline}
Since $|\xi + \na \phi\z_\mu(\un{e})|$ does not depend on $E\p$, the last expectation is simply
$$
\E\Ll[ |\xi + \na \phi\z_\mu(\un{e})|^{1/(1-\zeta)} \Rr]^{1-\zeta},
$$
which is bounded by a power of $\log(\mu^{-1})$ by Theorem~\ref{t:corr-moments}. 

For the first expectation in the right-hand side of \eqref{e:mcle}, we use the pointwise bound \eqref{greenpoint2} on the gradient of the Green function to get
\begin{equation*}
\begin{split}
& \E\Ll[ \Ll(| \na_1 \na_2 G\p_\mu(0,\un{e}')| \ | \na_1 \na_2 G^e_\mu(\un{e}',\un{e})|\Rr)^{1/\zeta} \ \Big| \ E\p_\mu = E\Rr]^\zeta \\
& \qquad \le \frac{c}{(1\vee |\un{e}'|)^{d-2+\alpha}} \ \E\Ll[ | \na_1 \na_2 G^e_\mu(\un{e}',\un{e})|^{1/\zeta} \ \Big| \ E\p_\mu = E\Rr]^\zeta \\
& \qquad = \frac{c}{(1\vee |\un{e}'|)^{d-2+\alpha}} \ \E\Ll[ | \na_1 \na_2 G^e_\mu(\un{e}',\un{e})|^{1/\zeta} \Rr]^\zeta,
\end{split}
\end{equation*}
and Lemma~\ref{l:greenperturbed} ensures that the latter is bounded by
$$
\frac{c}{(1\vee |\un{e}'|)^{d-2+\alpha} \ (1\vee |\un{e}-\un{e}'|)^{\zeta d}}.
$$
It thus follows that the right-hand side of \eqref{e:thestep} is bounded by some power of $\log(\mu^{-1})$ times
\begin{multline*}
\E\Ll[ \sum_{e \neq e' \in E\p_\mu}\frac{1}{(1\vee |\un{e}'|)^{d-2+\alpha} \ (1\vee |\un{e}-\un{e}'|)^{\zeta d}} \Rr] \\
= p^2 \sum_{e \neq e' \in B_{\mu^{-\gamma}} }\frac{1}{(1\vee |\un{e}'|)^{d-2+\alpha} \ (1\vee |\un{e}-\un{e}'|)^{\zeta d}}.
\end{multline*}
To sum up, we have shown that there exists $\gamma' > 0$ such that for every $\zeta < 1$,
$$
\E\Ll[\Ll|\na \ov{\phi}\p_\mu\Rr|\Rr] \le c  p^2 \log^r(\mu^{-1}) \sum_{e \neq e' \in B_{\mu^{-\gamma}} }\frac{1}{(1\vee |\un{e}'|)^{d-2+\alpha} \ (1\vee |\un{e}-\un{e}'|)^{\zeta d}} + ce^{-\mu^{-\gamma'}},
$$
for some exponent $r \ge 0$.
Up to a multiplicative constant, the sum over $e,e'$ above can be bounded by the integral
\begin{eqnarray*}
\int_{|x|, |y| \le \mu^{-\gamma}} \frac{\d x \ \d y}{|x|^{d-2+\alpha}|y-x|^{\zeta d}} &  \le & \int_{|x| \le \mu^{-\gamma}, |y-x| \le 2\mu^{-\gamma}}  \frac{\d x \ \d y}{|x|^{d-2+\alpha}|y-x|^{\zeta d}} \\
& \le & c \mu^{-\gamma(d-\zeta d +2-\alpha)}.
\end{eqnarray*}
Until now, the parameters $\gamma > 1/2$ and $\zeta < 1$ were arbitrary. We can choose them in such a way that
$$
\gamma(d-\zeta d +2-\alpha) < 1,
$$
and this finishes the proof of Theorem~\ref{t:approxcorr}.
\end{proof}

%

%
%
%
%
%
%
%
%
\section{Solving stationary elliptic equations}
\label{s:station}
Recall that we say that a function $\psi : \Z^d \times \un{\Omega} \to \R$ is stationary if $\psi(x,\un{\omega}) = \psi(0,\theta_x \ \un{\omega})$, where $(\theta_x)_{x \in \Z^d}$ denotes the action of $\Z^d$ on $\un{\Omega}$. The stationary function $\psi$ is thus fully characterized by the knowledge of the function $\un{\omega} \to \psi(0,\un{\omega})$, with which it can be identified.

The first purpose of this section is to recall existence and uniqueness results for elliptic equations. Contrary to what was done in section~\ref{s:solve}, the equations considered here do not contain a zero-order regularizing parameter. On the other hand, we will work with the extra assumption of stationarity. Because of the identification mentioned above, we will focus on functions defined on $\un{\Omega}$ (instead of $\Z^d \times \un{\Omega}$). We may keep this identification implicit, writing for instance $A\z$ instead of $A\z(0,\cdot)$.

For $f : \un{\Omega} \to \R$, we define the forward gradient $D f : \un{\Omega} \to \R^d$ as  
$$
D f(\un{\o})=\left[  
\begin{array}{c}
f(\theta_{\e_1} \ \un{\o})-f(\un{\o}) \\
\vdots\\
f(\theta_{\e_d} \ \un{\o})-f(\un{\o})
\end{array}
\right],
$$
and for $F = (F_1,\ldots,F_d): \un{\O} \to \R^d$, we write $D^* \cdot F$ for the backward divergence, 
$$
D^* \cdot F(\un{\o}) = \sum_{i = 1}^d \Ll(F_i(\un{\o}) - F_i(\theta_{-\e_i} \ \un{\o})\Rr).
$$
We write $\|F\|_2$ for the $L^2$ norm of $F$, that is,
$\|F\|_2^2 = \sum_{i = 1}^d \E[F_i^2]$. We let $L^2(\un{\O})$ be the space of functions $F : \un{\O} \to \R^d$ such that $\|F\|_2$ is finite, and $L^2_\na$ be the closure in $L^2(\un{\O})$ of $\{D f, \ f : \un{\O} \to \R \text{ s.t. } \E[f^2] < \infty \}$. 
\begin{thm}[Existence and uniqueness of solutions]
\label{t:station}
	For every $F \in L^2(\un{\O})$, there exists a unique $\chi \in L^2_\na$ such that
\begin{equation}
\label{e:station}
- D^* \cdot A\z \chi =  D^* \cdot F.
\end{equation}
\end{thm}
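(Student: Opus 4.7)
The plan is a standard Lax--Milgram argument in the Hilbert space $L^2_\na$. The only preparatory work is to recast \eqref{e:station} in weak form using an integration-by-parts identity on $\un{\O}$.

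First, I would establish the discrete integration-by-parts formula: since each translation $\theta_{\e_i}$ preserves $\P$, for $g : \un{\O} \to \R$ with $\E[g^2] < \infty$ and $F \in L^2(\un{\O})$ one has
\begin{equation*}
\E\bigl[g \ (D^* \cdot F)\bigr] = -\E\bigl[Dg \cdot F\bigr],
\end{equation*}
obtained term-by-term by the change of variables $\un{\o} \mapsto \theta_{\e_i} \un{\o}$ in $\E[g(\un{\o}) F_i(\theta_{-\e_i}\un{\o})]$. Applying this identity on both sides of \eqref{e:station} tested against an arbitrary function $f : \un{\O} \to \R$ with $\E[f^2] < \infty$, the equation becomes
\begin{equation*}
\E\bigl[Df \cdot A\z \chi\bigr] = -\E\bigl[Df \cdot F\bigr],
\end{equation*}
and by density of $\{Df\}$ in $L^2_\na$, this is equivalent to asking $\chi \in L^2_\na$ such that
\begin{equation*}
a(\chi,\eta) := \E\bigl[\eta \cdot A\z \chi\bigr] = -\E\bigl[\eta \cdot F\bigr] =: \ell(\eta) \qquad (\eta \in L^2_\na).
\end{equation*}

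Next, I would verify the Lax--Milgram hypotheses on the Hilbert space $L^2_\na$ (which is closed in $L^2(\un{\O})$ by definition, hence Hilbert). Since $A\z$ is diagonal with entries in $[c_-, c_+]$, the bilinear form $a$ is bounded, $|a(\chi,\eta)| \le c_+ \|\chi\|_2 \|\eta\|_2$, and coercive,
\begin{equation*}
a(\eta,\eta) = \sum_{i=1}^d \E\bigl[\omega_{0,\e_i} \eta_i^2 \bigr] \ge c_- \|\eta\|_2^2.
\end{equation*}
The linear functional $\ell$ is continuous by Cauchy--Schwarz, $|\ell(\eta)| \le \|F\|_2 \|\eta\|_2$. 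The Lax--Milgram theorem then yields a unique $\chi \in L^2_\na$ with $a(\chi,\cdot) = \ell(\cdot)$, which is precisely the unique solution to \eqref{e:station}.

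There is no real obstacle here; the only subtlety is the correct interpretation of \eqref{e:station} as an identity in the dual of $L^2_\na$ via the weak formulation, so that uniqueness is understood within $L^2_\na$ (where the problem is genuinely well-posed) rather than attempting to give pointwise sense to $D^* \cdot A\z \chi$ for an arbitrary $\chi \in L^2_\na$.
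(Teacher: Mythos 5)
Your Lax--Milgram argument is correct. The integration-by-parts identity follows from translation invariance of $\P$, the weak and strong formulations of \eqref{e:station} are equivalent here since $D^*$ is a bounded operator on $L^2(\un{\O})$, and boundedness/coercivity of the bilinear form on the Hilbert space $L^2_\na$ follow immediately from $c_- I \le A\z \le c_+ I$.

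The paper, however, takes a genuinely different route: it regularizes the equation by a zero-order term, obtains $\Psi_\mu = (\mu + \L)^{-1}(D^*\cdot F)$, proves via the spectral theorem and the $H^{-1}$-type estimate $\int \lambda^{-1}\,\d e_f(\lambda) \le \|F\|_2^2 / c_-$ that $(D\Psi_\mu)_{\mu>0}$ is Cauchy in $L^2(\un\O)$, and identifies the limit as the solution $\chi$. This is deliberately more elaborate than Lax--Milgram; as the paper remarks, the point is that the same spectral estimates are then reused directly in Proposition~\ref{p:defcoef} to show that $D\psi_\mu \to \chi$ as $\mu \to 0$, where $\psi_\mu$ is a specific $\mu$-regularized object. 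Your proof settles the theorem cleanly but would not by itself give the convergence of the regularized family, which is what the paper actually needs downstream. As a minor quibble, your closing remark about giving sense to $D^*\cdot A\z\chi$ only ``in the dual of $L^2_\na$'' is not necessary: since $D^*$ is bounded on $L^2(\un\O)$, \eqref{e:station} already makes sense as an identity in $L^2(\un\O)$ for any $\chi \in L^2_\na$, and the weak formulation is merely a convenient equivalent.
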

\begin{proof}
We begin by showing the existence of solutions, following roughly the arguments of \cite{kipvar}. (Although the proof is by now standard, it is useful to recall its workings since we will need to extend it slightly later on.) The operator $\L := -D^* \cdot A\z D$ is self-adjoint and positive on $\{f : \un{\O} \to \R : \E[f^2] < \infty\}$. The positivity comes from the observation that, for every square-integrable $f$,
\begin{equation}
\label{posit}
\E[f \ \L f] = \E[Df \cdot A\z Df]
.
\end{equation}
Hence, for every $\mu > 0$, there exists a square-integrable $\Psi_\mu : \un{\O} \to \R$ such that
\begin{equation}
\label{e:Psi}
(\mu + \L)\Psi_\mu = D^* \cdot F.
\end{equation} 
Let $f  = D^* \cdot F$. Since $f$ is square-integrable, by the spectral theorem, there exists a measure $e_f$ on $\R_+$ such that for every bounded continuous function $G : \R_+ \to \R$, one has
\begin{equation}
\label{e:spectr}
\E\Ll[ f \ G(\L)(f) \Rr] = \int G(\lambda) \ \d e_f(\lambda).
\end{equation}
Note that for every square-integrable function $g$, one has
$$
\E\Ll[f \ g\Rr] = - \E[F \cdot Dg] \le \|F\|_2 \ \|Dg\|_2,
$$
by the Cauchy-Schwarz inequality. Furthermore, by \eqref{posit},
$$
\|Dg\|_2^2 \le \frac{1}{c_-} \E[g \ \L g],
$$
so that 
$$
\E\Ll[f \ g\Rr] \le \frac{\|F\|_2}{\sqrt{c_-}} \ \E[g \ \L g]^{1/2}.
$$
For every $n \in \N$, let $G_n(\lambda) = \lambda^{-1} \wedge n$. Taking $g = G_n(\L)(f)$ in the inequality above, and using \eqref{e:spectr}, we get
$$
\int (\lambda^{-1} \wedge n) \ \d e_f(\lambda) \le \frac{\|F\|_2}{\sqrt{c_-}} \Ll(\int \lambda(\lambda^{-1} \wedge n)^2 \ \d e_f(\lambda)\Rr)^{1/2}.
$$
By the monotone convergence theorem, this turns into
\begin{equation}
\label{estimH-1}
\int \frac{1}{\lambda} \ \d e_f(\lambda) \le \frac{\|F\|_2^2}{c_-}.
\end{equation}
Equipped with this inequality, we can show that $(D\Psi_\mu)_{\mu > 0}$ is a Cauchy sequence in $L^2(\un{\O})$. Indeed, note that by \eqref{posit}, we have
\begin{equation}
\label{e:stepcauchy}
\|D\Psi_\mu - D\Psi_\nu\|_2^2 \le \frac{1}{c_-} \E[(\Psi_\mu - \Psi_\nu) \ \L(\Psi_\mu - \Psi_\nu)].
\end{equation}
Since $\Psi_\mu = (\mu + \L)^{-1} f$, and using \eqref{e:spectr}, we can rewrite the expectation in the right-hand side above as
$$
\int \lambda \Ll( (\mu + \lambda)^{-1} - (\nu + \lambda)^{-1} \Rr)^2 \ \d e_f(\lambda) = \int \frac{\lambda (\nu - \mu)^2}{(\mu + \lambda)^2 (\nu + \lambda)^2} \ \d e_f(\lambda).
$$
Without loss of generality, we may assume that $\mu < \nu$. In this case, the integrand above is bounded by
$$
\frac{\lambda \nu^2}{\lambda^2 \nu^2} = \frac{1}{\lambda},
$$
which is integrable by \eqref{estimH-1}. Now, for every fixed $\lambda$, we have
$$
\frac{\lambda (\nu - \mu)^2}{(\mu + \lambda)^2 (\nu + \lambda)^2} \le \frac{\lambda \nu^2}{\lambda^4},
$$
which tends to $0$ as $\mu,\nu \to 0$. By the dominated convergence theorem, we thus obtain that the left-hand side of \eqref{e:stepcauchy} tends to $0$ as $\mu,\nu \to 0$, and thus that $(D\Psi_\mu)_{\mu > 0}$ is a Cauchy sequence in $L^2(\un{\O})$. Let us write $\chi$ for the limit. By definition, we have $\chi \in L^2_\na$. 

We now show that $\chi$ satisfies \eqref{e:station}. One can check that, as a consequence of the estimate \eqref{estimH-1} (and using \eqref{e:spectr}),
\begin{equation}
\label{estim2}
\sqrt{\mu} \Psi_\mu \xrightarrow[\mu \to 0]{L^2(\un{\O})} 0.
\end{equation}
For every square-integrable $g : \un{\O} \to \R$, we can write the weak formulation of \eqref{e:Psi}:
$$
\mu \E[g \ \Psi_\mu] + \E[D g \cdot A\z D\Psi_\mu] = \E[g \ D^* \cdot F].
$$
Using \eqref{estim2} and the Cauchy-Schwarz inequality, we get that $\mu \E[g \ \Psi_\mu]$ tends to $0$ as $\mu$ tends to $0$, and thus
\begin{equation}
\label{e:station-weak}
\E[D g \cdot A\z \chi] = \E[g \ D^* \cdot F].
\end{equation}
The left-hand side above is equal to $-\E[g \ D^* \cdot A\z \chi]$. Hence, the function $D^* \cdot A\z \chi + D^* \cdot F$ is orthogonal to every function in $L^2(\un{\O})$. It is thus equal to zero, and that is to say that $\chi$ satisfies \eqref{e:station}.

We now turn to uniqueness. By linearity, it suffices to show uniqueness for $F = 0$. Let $\chi \in L^2_\na$ satisfy 
\begin{equation}
\label{e:station0}
-D^* \cdot A\z \chi = 0.
\end{equation}
There exists a sequence of square-integrable $f_n : \un{\O} \to \R$ such that $D f_n$ converges to~$\chi$ in $L^2(\un{\O})$. By the weak formulation of \eqref{e:station0}, we have
$$
\E[Df_n \cdot A\z \chi] = 0.
$$
Passing to the limit, we get $\E[\chi \cdot A\z \chi] = 0$, which implies that $\chi = 0$, and thus finishes the proof.
\end{proof}

We now proceed to derive several convergence results that will be useful for our subsequent reasoning.

\begin{prop}[convergence of correctors]
\label{p:corrlim}
The function $\na \phi\z_\mu(0,\cdot)$ converges in $L^2(\un{\O})$, as $\mu$ tends to $0$, to the function $\na \phi\z(0,\cdot)$, while for every $e \in \B$, the function
$\na \phi^e_\mu(0,\cdot)$ converges in $L^2(\un{\O})$, as $\mu$ tends to $0$, to a function that we write $\na \phi^e(0,\cdot)$.
\end{prop}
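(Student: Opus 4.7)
The first convergence I would recover by casting the defining equation of $\phi\z_\mu$ into the stationary setup of Theorem~\ref{t:station}. Because $\mu > 0$, Theorem~\ref{t:ex-un} together with stationarity of $A\z$ forces $\phi\z_\mu(\cdot,\un{\o})$ to be stationary, so $\psi_\mu := \phi\z_\mu(0,\cdot)$ belongs to $L^2(\un{\O})$ (by Theorem~\ref{t:corr-moments}), and equation \eqref{defphimu} takes the form $(\mu + \L)\psi_\mu = D^* \cdot (A\z \xi)$ with $\L = -D^* \cdot A\z D$. This is precisely \eqref{e:Psi} with $F = A\z \xi \in L^2(\un{\O})$, so the Cauchy-in-$L^2(\un{\O})$ argument reproduced inside the proof of Theorem~\ref{t:station} gives $D\psi_\mu = \na\phi\z_\mu(0,\cdot) \to \chi$ in $L^2(\un{\O})$, where $\chi$ is the unique $L^2_\na$ solution of $-D^* \cdot A\z(\xi+\chi) = 0$. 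Since $\na \phi\z(0,\cdot)$ is, by the standard construction of the corrector, a stationary mean-zero element of $L^2_\na$ solving the same equation, the uniqueness part of Theorem~\ref{t:station} identifies $\chi$ with $\na\phi\z(0,\cdot)$.

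For the second convergence, I would split $\na \phi^e_\mu(0) = \na \phi\z_\mu(0) + \na \ov{\phi}^e_\mu(0)$; the first summand converges by the previous step. Lemma~\ref{l:one} gives
$$
\na \ov{\phi}^e_\mu(0,\un{\o}) = -M_\mu(\un{\o})\,C^e(\un{e})\,v_\mu(\un{\o}), \qquad M_\mu := \na_1\na_2 G^e_\mu(0,\un{e}), \quad v_\mu := \xi + \na\phi\z_\mu(\un{e}).
$$
It would then suffice to establish three facts: (i) $v_\mu \to v_0 := \xi + \na\phi\z(\un{e})$ in $L^2(\un{\O})$, which is immediate from the first part together with stationarity (and $v_0 \in L^2(\un{\O})$); (ii) $|M_\mu(\un{\o})| \le c/(1 \vee |\un{e}|)^{d-2+\alpha}$ uniformly in $\mu$ and $\un{\o}$, obtained by writing the mixed second gradient as a discrete difference of two first gradients and invoking \eqref{greenpoint2} twice; (iii) $M_\mu(\un{\o}) \to M_0(\un{\o})$ as $\mu \to 0$ for $\P$-almost every $\un{\o}$.

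The genuinely delicate step will be (iii). Starting from $M_\mu = \int_0^\infty e^{-\mu t} \na_2\na_3 q^e(t,0,\un{e})\,\d t$, I would control the integrand by combining the semigroup identity
$$
\na_2\na_3 q^e(2t,0,\un{e}) = \sum_z \na_2 q^e(t,0,z)\,\bigl(\na_3 q^e(t,z,\un{e})\bigr)
$$
with the pointwise bound \eqref{parabpointgrad} and the semigroup property for the homogeneous kernel $q^*$. This produces the $\un{\o}$-uniform, $t$-integrable majorant $|\na_2\na_3 q^e(t,0,\un{e})| \le c\,q^*(kt,\un{e})/(1\vee t)^{2\alpha}$, integrable in $t$ for every $d \ge 2$ since $d/2+2\alpha > 1$. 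Dominated convergence in $t$ then yields (iii), with $M_0(\un{\o}) = \int_0^\infty \na_2\na_3 q^e(t,0,\un{e})\,\d t$.

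Given (i)--(iii), the algebraic identity $M_\mu v_\mu - M_0 v_0 = M_\mu(v_\mu - v_0) + (M_\mu - M_0)v_0$ delivers convergence in $L^2(\un{\O})$: the first summand has $L^2(\un{\O})$-norm bounded by $\|M_\mu\|_\infty\|v_\mu - v_0\|_{L^2} \to 0$ by (i)--(ii), and the second tends to zero in $L^2(\un{\O})$ by dominated convergence, using pointwise vanishing from (iii) together with the uniform domination $|(M_\mu - M_0)v_0| \le 2\|M\|_\infty|v_0| \in L^2(\un{\O})$. The limit then defines $\na \phi^e(0,\cdot) := \na\phi\z(0,\cdot) - M_0\,C^e(\un{e})(\xi + \na\phi\z(\un{e}))$. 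The hard part is step (iii), where pointwise-in-$\un{\o}$ (rather than averaged) control is needed on the mixed second gradient of the heat kernel; the semigroup factorization is what lets us bypass the absence of such a pointwise bound in the cited literature.
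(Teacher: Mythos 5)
Your proof is correct, and its global structure coincides with the paper's: the first part reduces to Theorem~\ref{t:station} with $F = A\z\xi$, the second part passes through the decomposition $\na\phi^e_\mu = \na\phi\z_\mu + \na\ov{\phi}^e_\mu$ with Lemma~\ref{l:one}, and the resulting formula for $\na\phi^e(0,\cdot)$ matches \eqref{e:naphie}. The genuine difference is in your step (iii). The paper does not work with the mixed second gradient $\na_2\na_3 q$ at all: it observes that $\na_1\na_2 G^e_\mu(0,\un{e})$ is a finite linear combination of $\na_2 G^e_\mu(x,\un{e})$ over the finitely many $x$ with $|x|\le 1$, and each $\na_2 G^e_\mu(x,\un{e}) = \int_0^\infty e^{-\mu t}\na_3 q^e(t,x,\un{e})\,\d t$ converges, uniformly over $\un{\o}$, by dominated convergence with the majorant $c\,q^*(kt,\cdot)/(1\vee t)^\alpha$ from \eqref{parabpointgrad}, which is $t$-integrable since $d/2+\alpha>1$ for $d\ge 2$. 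Your Chapman--Kolmogorov factorization, producing the extra majorant $c\,q^*(kt,\un{e})/(1\vee t)^{2\alpha}$ for $\na_2\na_3 q^e$, is valid but buys nothing here: you already used the finite-difference reduction in step (ii) to prove the uniform bound on $M_\mu$, and exactly the same reduction, applied to $\mu\mapsto\na_2 G^e_\mu(x,\un{e})$ with the first-gradient majorant, gives step (iii) directly. So the closing remark that ``the semigroup factorization is what lets us bypass the absence of such a pointwise bound in the cited literature'' is mistaken; nothing beyond \eqref{parabpointgrad} (a pointwise bound on the \emph{first} gradient of $q^\o$) is needed, and this is what the paper uses.
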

\begin{proof}
The first part is classical (and is a minor adaptation of the proof of Theorem~\ref{t:station}, choosing $F = A\z \xi$). For the second part, in view of Lemma~\ref{l:one}, it suffices to study the convergence of $\na_2 G^\o_\mu(x,y)$, for any fixed $x, y \in \Z^d$ and $\omega \in \Omega$. We define
$$
\na_2 G^\o(x,y) = \int_0^{+\infty} \na_3 q^\o(t,x,y) \ \d t.
$$
Note that this definition makes sense even in dimension 2 by \eqref{parabpointgrad}, although $G^\o(x,y)$ itself does not. For the same reason, $\na_2 G^\o(x,y)$ is bounded uniformly over $\omega \in \Omega$. We get that
$$
\na_2 G^\o(x,y) - \na_2 G^\o_\mu(x,y) = \int_0^{+\infty} (1-e^{-\mu t}) \na_3 q^\o(t,x,y) \ \d t.
$$
This and \eqref{parabpointgrad} ensure that $\na_2 G^\o_\mu(x,y)$ converges to $\na_2 G^\o(x,y)$ as $\mu$ tends to $0$, uniformly over $\omega \in \Omega$. Using Lemma~\ref{l:one}, we thus obtain the convergence of $\na \phi^e_\mu$ in $L^2(\un{\O})$, as desired.
\end{proof}
\begin{rem}
\label{r:phie}
The proof shows that
\begin{equation}
\label{e:naphie}
\na \phi^e(0,\un{\o}) = \na \phi\z(0,\un{\o}) - \Ll( \na_1 \na_2 G^e(0,\un{e})\Rr)  C^e(\un{e}) (\xi + \na \phi\z(\un{e}) ).
\end{equation}
\end{rem}
\begin{prop}[convergence of full correction]
\label{p:defcoef}
Let $\psi_\mu : \un{\O} \to \R$ be defined by
$$
\psi_\mu = \sum_{e \in \B} (\phi^e_\mu - \phi\z_\mu)(0,\cdot).
$$
The gradient $D \psi_\mu$ converges in $L^2(\un{\O})$, as $\mu$ tends to $0$, to $\chi \in L^2_\na$ the unique solution of
\begin{equation}
\label{e:defcoef}
-D^* \cdot A\z \chi = D^* \cdot F,
\end{equation}
where $F \in L^2(\un{\O})$ is defined by
\begin{equation}
\label{defF}
F = \sum_{e \in \B} C^e (\xi + \na \phi^e)(0,\cdot).
\end{equation}
\end{prop}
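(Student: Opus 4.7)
The plan is to show that the formal sum $\psi_\mu$ is a well-defined $L^2(\un{\O})$ function for $\mu > 0$ (thanks to exponential decay estimates), that it satisfies the stationary equation $(\mu + \L)\psi_\mu = D^* \cdot F_\mu$ with $F_\mu := \sum_e C^e(\xi+\na\phi^e_\mu)(0,\cdot)$, and then to pass to the limit $\mu\to 0$ using the machinery of Theorem~\ref{t:station}. By Lemma~\ref{l:one} and the pointwise bound \eqref{greenpoint2}, the $L^2(\un{\O})$-norm of $\ov{\phi}^e_\mu(0,\cdot)$ is bounded, up to a constant, by $(1\vee|\un{e}|)^{-(d-2+\alpha)}\,e^{-\td{c}\sqrt{\mu}|\un{e}|}$, which is summable over $e$ for every $\mu > 0$. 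Hence $\psi_\mu = \sum_e \ov{\phi}^e_\mu(0,\cdot)$ converges absolutely in $L^2(\un{\O})$. Applying the bounded operator $D$ termwise and reindexing through the covariance identity $\ov{\phi}^e_\mu(0,\theta_z\un{\o}) = \ov{\phi}^{e+z}_\mu(z,\un{\o})$ (a direct consequence of the uniqueness in Theorem~\ref{t:ex-un}) yields $D\psi_\mu = \sum_e \na\ov{\phi}^e_\mu(0,\cdot)$, absolutely convergent in $L^2_\na$.

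Next, subtracting the defining equations of $\phi^e_\mu$ and $\phi\z_\mu$ gives the PDE $\mu\ov{\phi}^e_\mu - \na^* \cdot A\z\,\na\ov{\phi}^e_\mu = \na^* \cdot C^e(\xi+\na\phi^e_\mu)$ on $\Z^d$. Since $C^e(y) = 0$ unless $y = \un{e}$, the stationary vector field $F_\mu$ is pointwise a finite sum of at most $d$ terms and hence lies in $L^2(\un{\O},\R^d)$; Proposition~\ref{p:corrlim} moreover gives $F_\mu \to F$ in $L^2$. Summing the above PDEs over $e$ (valid in $L^2(\un{\O})$ by the absolute convergence from the first paragraph, and using the analogous identity $D^* \cdot A\z D\psi_\mu = \sum_e \na^* \cdot A\z\,\na\ov{\phi}^e_\mu(0,\cdot)$) produces the stationary equation $(\mu+\L)\psi_\mu = D^* \cdot F_\mu$, where $\psi_\mu$ is the unique $L^2(\un{\O})$ solution by coercivity of $\mu + \L$.

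Finally, let $\Psi_\mu$ denote the analogous solution with $F$ in place of $F_\mu$, i.e., $(\mu+\L)\Psi_\mu = D^* \cdot F$. The proof of Theorem~\ref{t:station} gives $D\Psi_\mu \to \chi$ in $L^2_\na$. The difference satisfies $(\mu+\L)(\psi_\mu - \Psi_\mu) = D^* \cdot (F_\mu - F)$; testing with $\psi_\mu - \Psi_\mu$ together with Cauchy-Schwarz on the right-hand side yields $\|D(\psi_\mu - \Psi_\mu)\|_2 \le c_-^{-1}\|F_\mu - F\|_2 \to 0$, whence $D\psi_\mu \to \chi$ in $L^2_\na$. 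The main obstacle is the careful book-keeping that translates the individual PDEs for $\ov{\phi}^e_\mu$ on $\Z^d\times\un{\O}$ into a single stationary equation on $\un{\O}$ for $\psi_\mu$; this rests on the interplay between the Green-function representation of Lemma~\ref{l:one}, the covariance identity for $\ov{\phi}^e_\mu$, and the exponential decay $e^{-\td{c}\sqrt{\mu}|\un{e}|}$ which for each fixed $\mu > 0$ renders every relevant series absolutely convergent.
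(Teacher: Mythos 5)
Your proof is correct and follows essentially the same route as the paper's: derive the stationary equation $(\mu+\L)\psi_\mu = D^*\cdot F_\mu$, observe $F_\mu\to F$ in $L^2(\un{\O})$ via Proposition~\ref{p:corrlim}, compare with the auxiliary solution $\td{\psi}_\mu$ of $(\mu+\L)\td{\psi}_\mu = D^*\cdot F$ which converges by Theorem~\ref{t:station}, and bound the difference by coercivity. The only minor difference is in the last step, where you run a direct energy estimate (testing the equation for $\psi_\mu-\td{\psi}_\mu$ against itself) instead of the paper's spectral-calculus bound built on \eqref{estimH-1}; both yield $\|D(\psi_\mu-\td{\psi}_\mu)\|_2\le c_-^{-1}\|F_\mu-F\|_2$.
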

\begin{rem}
Note that $\psi_\mu$ is well-defined, since $\td{\phi}^\B_\mu$ introduced in \eqref{deftdphi} is. Also, in the definition of $F$, note that the presence of the term $C^e$ ensures that every summand indexed by an edge $e$ such that $\un{e} \neq 0$ is actually equal to $0$. 
\end{rem}
\begin{proof}
Recall that, by definition, $\phi^e_\mu$ satisfies
$$
\mu \phi^e_\mu - \na^* \cdot A^e (\xi + \na \phi^e_\mu) = 0 \qquad (\text{in } \Z^d),
$$
and that $A^e = A\z+C^e$. As a consequence, we get
$$
\mu \phi^e_\mu - \na^* \cdot A\z (\xi + \na \phi^e_\mu) = \na^* \cdot C^e (\xi + \na \phi^e_\mu) \qquad (\text{in } \Z^d).
$$
Using also the definition of $\phi\z_\mu$, see \eqref{defphimu}, and recalling that we write $\ov{\phi}^e_\mu = \phi^e_\mu - \phi\z_\mu$, we obtain that
$$
\mu \ov{\phi}^e_\mu - \na^* \cdot A\z  \na \ov{\phi}^e_\mu = \na^* \cdot C^e (\xi + \na \phi^e_\mu) \qquad (\text{in } \Z^d),
$$
and thus
\begin{equation}
\label{e:defcoef1}
\mu \psi_\mu - \na^* \cdot A\z  \na \psi_\mu = \sum_{e \in \B} \na^* \cdot C^e (\xi + \na \phi^e_\mu) \qquad (\text{in } \Z^d).
\end{equation}
We let 
\begin{equation}
\label{defFmu}
F_\mu(x,\un{\o}) = \sum_{e \in \B} C^e (\xi + \na \phi^e_\mu)(x,\un{\o}).
\end{equation}
Note that $F_\mu$ is a stationary function, so we can identify it with the function
$$
\left\{
\begin{array}{lll}
\un{\O} & \to & \R^d \\
\un{\o} & \mapsto & F_\mu(0,\un{\o}).
\end{array}
\right.
$$
With this identification in mind, we can rewrite \eqref{e:defcoef1} as
\begin{equation}
\label{e:eqFmu}
\Ll( \mu - D^* \cdot A\z D \Rr) \psi_\mu = D^* \cdot F_\mu.
\end{equation}
By Proposition~\ref{p:corrlim}, the function $F_\mu$ converges in $L^2(\un{\O})$ to $F$ defined in \eqref{defF}.
We would like to argue as in the ``existence'' part of the proof of Theorem~\ref{t:station} to show that $D\psi_\mu$ converges to $\chi$ satisfying \eqref{e:defcoef}, but a difficulty arises since the right-hand side of \eqref{e:eqFmu} now depends on $\mu$. Let $\td{\psi}_\mu \in L^2(\un{\O})$ be the unique solution to
$$
\Ll( \mu - D^* \cdot A\z D \Rr) \td{\psi}_\mu = D^* \cdot F.
$$
By Theorem~\ref{t:station}, we know that $D\td{\psi}_\mu$ converges in $L^2(\un{\O})$ to $\chi \in L^2_\na$ the unique solution of \eqref{e:defcoef}. We now let $\ov{\psi}_\mu = \psi_\mu - \td{\psi}_\mu$, so that
$$
\Ll( \mu - D^* \cdot A\z D \Rr) \ov{\psi}_\mu = D^* \cdot (F_\mu - F).
$$
In order to conclude, we need to show that $D\ov{\psi}_\mu$ tends to $0$ in $L^2(\un{\O})$. Let us write $f_\mu = D^* \cdot (F_\mu - F)$. In view of \eqref{posit} and \eqref{e:spectr}, we have
$$
\|D\ov{\psi}_\mu\|_2^2 \le \frac{1}{c_-} \E[\ov{\psi}_\mu \ \L \ov{\psi}_\mu] = \frac{1}{c_-} \int\frac{\lambda}{(\mu + \lambda)^2} \ \d e_{f_\mu}(\lambda) \le \frac{1}{c_-} \int \frac{1}{\lambda} \ \d e_{f_\mu}(\lambda).
$$
By \eqref{estimH-1}, the last integral is bounded by
$$
\frac{\|F_\mu - F\|_2^2}{c_-},
$$
and since $F_\mu$ tends to $F$ in $L^2(\un{\O})$, this finishes the proof.
\end{proof}

%
%
%
%
%
%
%
%
\section{First-order expansion around $p = 0$}
\label{s:expansion0}

We are now ready to state and prove the first-order expansion around $p = 0$ of the homogenized matrix $\Ah\p$.

\begin{thm}[first-order expansion around $p=0$]
\label{t:main0}
There exists $a_1\z \in \R$ such that, as $p$ tends to $0$,
\begin{equation}
\label{e:main0}
\xi \cdot \Ah\p \xi = \xi \cdot \Ah\z \xi + p \ a_1\z + o(p).
\end{equation}
Moreover, the coefficient $a_1\z$ can be defined as the limit of $a_1\z(\mu)$ as $\mu$ tends to $0$, where $a_1\z(\mu)$ is given by
\begin{equation}
\label{defa1mu}
a_1\z(\mu) = \sum_{e \in \B} \Ll(\E[\xi \cdot A^e(\xi + \na {\phi}_\mu^e)] - \E[\xi \cdot A\z(\xi + \na {\phi}_\mu\z)] \Rr),
\end{equation}
and where we recall that $\phi_\mu^e$ satisfies \eqref{defphiE} with $E = \{e\}$, that $\phi_\mu\z$ satisfies \eqref{defphimu} with $p = 0$, and that the functions under the expectations in \eqref{defa1mu} are understood to be evaluated at $(0,\un{\o})$. Alternative characterizations of $a_1\z$ are given in Propositions~\ref{p:periodiz} and \ref{p:spatial-average} below.
\end{thm}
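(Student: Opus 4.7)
The plan is to bridge $\xi\cdot\Ah\p\xi-\xi\cdot\Ah\z\xi$ through the $\mu$-regularized quantities using Theorem~\ref{t:approx}, replace the regularized corrector $\na\phi\p_\mu$ by its linearization $\na\td\phi\p_\mu$ via Theorem~\ref{t:approxcorr}, and then expand the expectation in powers of the Bernoulli indicators $\1_e:=\1_{b\p_e=1}$. The part linear in the $\1_e$'s will contribute exactly $p\,a_1\z(\mu)$; all other errors can be made $o(p)$ by choosing $\mu=\mu(p)$ of the form $p^{1+\eps}$ for small $\eps>0$. The delicate last step is to show that $a_1\z(\mu)$ converges as $\mu\to 0$, using Propositions~\ref{p:corrlim} and \ref{p:defcoef}.

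For the algebraic expansion, writing $A\p=A\z+\sum_e\1_e C^e$ and, from \eqref{deftdphi}, $\na\td\phi\p_\mu=\na\phi\z_\mu+\sum_e\1_e\na\ov\phi^e_\mu$, one gets
\begin{multline*}
\xi\cdot A\p(\xi+\na\td\phi\p_\mu)-\xi\cdot A\z(\xi+\na\phi\z_\mu) \\
=\sum_e\1_e\bigl(\xi\cdot C^e(\xi+\na\phi\z_\mu)+\xi\cdot A\z\na\ov\phi^e_\mu\bigr)+\sum_{e,e'}\1_e\1_{e'}\,\xi\cdot C^e\na\ov\phi^{e'}_\mu.
\end{multline*}
Since $\1_e$ is independent of $(\o,\o^{(1)}_e)$, and $C^e(0)$ vanishes unless $\un{e}=0$, taking expectations and combining the $\un{e}=0$ pieces of the first sum with the diagonal part ($e=e'$) of the second sum reconstructs $C^e(\xi+\na\phi^e_\mu)$; matching this with $\E[\xi\cdot A\z\na\ov\phi^e_\mu]$ for $\un{e}\ne 0$ (where $A^e(0)=A\z(0)$) yields exactly $p\,a_1\z(\mu)$ as defined in \eqref{defa1mu}. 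The off-diagonal contribution $R(p,\mu):=p^2\sum_{\un{e}=0}\sum_{e'\ne e}\E[\xi\cdot C^e\na\ov\phi^{e'}_\mu(0)]$ is then a remainder.

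The errors to control are: $O(\mu\log^r\mu^{-1})$ from the two applications of Theorem~\ref{t:approx} (at $p$ and at $0$); $c(p^2\mu^{-1+\beta}+e^{-\mu^{-\gamma}})$ from Theorem~\ref{t:approxcorr} after pulling $A\p$ out by uniform ellipticity; and $|R(p,\mu)|$. For the last, Lemma~\ref{l:one} expresses $\na\ov\phi^{e'}_\mu(0)$ as a product of $\na_1\na_2 G^{e'}_\mu(0,\un{e}')$ and $C^{e'}(\un{e}')(\xi+\na\phi\z_\mu(\un{e}'))$; applying Hölder's inequality with exponent $\zeta<1$ close to $1$, the moment bound of Lemma~\ref{l:greenperturbed} together with the Gaussian decay inherited from \eqref{greenpoint2}, and the log-moments of $\na\phi\z_\mu$ from Theorem~\ref{t:corr-moments}, one obtains $|R(p,\mu)|\le cp^2\mu^{-\eta}\log^r\mu^{-1}$ for $\eta$ arbitrarily small. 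Taking $\mu=p^{1+\eps}$ with $\eps>0$ small enough, all errors are $o(p)$, hence
\[
\xi\cdot\Ah\p\xi=\xi\cdot\Ah\z\xi+p\,a_1\z(\mu(p))+o(p).
\]

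The main obstacle is to show $a_1\z(\mu)\to a_1\z$ as $\mu\to 0$, for only then can one conclude $p\,a_1\z(\mu(p))=p\,a_1\z+o(p)$. Splitting \eqref{defa1mu} according to whether $\un{e}=0$, the terms with $\un{e}\ne 0$ reduce to $\E[\xi\cdot A\z\na\ov\phi^e_\mu(0)]$; summing these over all $e$ and using the covariance property $\phi^e_\mu(x,\theta_z\un\o)=\phi^{e+z}_\mu(x+z,\un\o)$, one identifies $\sum_e\na\ov\phi^e_\mu(0,\cdot)$ with $D\psi_\mu$ of Proposition~\ref{p:defcoef}. That proposition then supplies the $L^2(\un\O)$-convergence of $D\psi_\mu$ to the unique $\chi\in L^2_\na$ solving \eqref{e:defcoef}, while Proposition~\ref{p:corrlim} gives the $L^2$-convergence of the $d$ remaining terms ($\un{e}=0$) with $\na\phi^e_\mu\to\na\phi^e$. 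By the Cauchy–Schwarz inequality, the expectations pass to the limit, so $a_1\z(\mu)$ converges to
\[
a_1\z:=\E[\xi\cdot A\z\chi]+\sum_{\un{e}=0}\E[\xi\cdot C^e(\xi+\na\phi^e)(0)],
\]
which completes the proof of \eqref{e:main0}.
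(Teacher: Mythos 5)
Your argument is correct and follows essentially the same route as the paper: fix $\mu=p^{1+\eps}$, bridge via Theorem~\ref{t:approx}, linearize the corrector via Theorem~\ref{t:approxcorr}, exploit independence of the Bernoulli indicators, and pass to the limit $\mu\to0$ using Propositions~\ref{p:corrlim} and~\ref{p:defcoef}. The only notable difference is bookkeeping: you expand $\E[\xi\cdot A\p(\xi+\na\td\phi\p_\mu)]$ in one algebraic step using the indicator representation $A\p=A\z+\sum_e\1_eC^e$, $\na\td\phi\p_\mu=\na\phi\z_\mu+\sum_e\1_e\na\ov\phi^e_\mu$, which is a clean reorganization of the paper's sequential decomposition \eqref{e:1}--\eqref{e:3}; and for the off-diagonal remainder $R(p,\mu)$ you invoke Lemma~\ref{l:greenperturbed} and an interpolation with the pointwise exponential decay, yielding $O(p^2\mu^{-\eta}\log^r\mu^{-1})$, whereas the paper gets a cleaner uniform $O(p^2)$ by rewriting that sum as $p^2\sum_{\un e=0}\E[\xi\cdot C^e(D\psi_\mu-\na\ov\phi^e_\mu)]$ and applying Cauchy--Schwarz with the uniform $L^2$ bound on $D\psi_\mu$ from Proposition~\ref{p:defcoef}. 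Your bound is more complicated and slightly weaker than needed, but still $o(p)$ for $\eta$ small, so the proof goes through.
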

\begin{rem}
	The proof actually reveals that there exists an exponent $\ov{\eta} > 0$ depending on the ellipticity constants such that \eqref{e:main0} holds with $o(p)$ replaced by $o(p^{1+\ov{\eta}})$. We show in Theorem~\ref{t:error} that the error term is actually $o(p^{2-\eta})$ for every $\eta > 0$.
\end{rem}
\begin{proof}
For this proof, we fix
\begin{equation}
\label{fixmu}
\mu = p^{1+\eps},
\end{equation}
for some $\eps > 0$. Note that in order to avoid heavy notations, this dependence between $\mu$ and $p$ is kept implicit. For instance, and in view of Theorem~\ref{t:approx}, we may write without further comment that
\begin{equation}
\label{e:approx}
\Ll|\xi \cdot \Ah\p \xi - \E[\xi \cdot A\p(\xi + \na \phi_\mu\p)] \Rr|	= o\Ll(p\Rr) \qquad (p \to 0).
\end{equation}
For $\beta > 0$ given by Theorem~\ref{t:approxcorr}, we have
\begin{equation}
\label{e:approx2}
\Ll| \E[\xi \cdot A\p(\xi + \na \phi_\mu\p)]  - \E[\xi \cdot A\p(\xi + \na \td{\phi}_\mu\p)] \Rr| = O\Ll(p^{2-(1+\eps)(1-\beta)}\Rr)
.
\end{equation}
We fix the value of $\eps > 0$ in such a way that 
\begin{equation}
\label{e:fixeps}
2-(1+\eps)(1-\beta) > 1.
\end{equation}
Using the definition of $\td{\phi}_\mu$ introduced in \eqref{deftdphi} (with $E = E\p$), we can write
\begin{equation}
\label{e:1}
\E[\xi \cdot A\p(\xi + \na \td{\phi}_\mu\p)] = \E[\xi \cdot A\p(\xi + \na {\phi}_\mu\z)] + \E\Ll[ \sum_{e \in E\p} \xi \cdot A\p(\na \phi^e_\mu - \na \phi\z_\mu) \Rr].
\end{equation}
We analyse the two terms on the right-hand side separately, beginning with the first. Note that $A\p$ (evaluated at the origin) is equal to $A\z$ unless one of the edges attached to the origin belongs to $E\p$. Moreover, the event that two or more of these edges belong to $E\p$ has probability $O(p^2)$. As a consequence, and since $\E[|\na \phi_\mu\z|^2]$ is bounded uniformly over $\mu$, we have
\begin{multline}
\label{e:part1}
\E[\xi \cdot A\p(\xi + \na {\phi}_\mu\z)] \\
= \E[\xi \cdot A\z(\xi + \na {\phi}_\mu\z)] + p \sum_{e : \un{e} = 0} \Ll(\E[\xi \cdot A^e(\xi + \na {\phi}_\mu\z)] - \E[\xi \cdot A\z(\xi + \na {\phi}_\mu\z)] \Rr) + O(p^2).
\end{multline}
We now turn to the second term in the right-hand side of \eqref{e:1}, which we decompose into
\begin{equation}
\label{e:2}
\E\Ll[ \sum_{e \in E\p,  \un{e} = 0} \xi \cdot A\p(\na \phi^e_\mu - \na \phi\z_\mu) \Rr] + \E\Ll[ \sum_{e \in E\p,  \un{e} \neq 0} \xi \cdot A\p(\na \phi^e_\mu - \na \phi\z_\mu) \Rr].
\end{equation}
By Lemma~\ref{l:one} and \eqref{greenpoint2}, it is clear that $\E[|\na \phi^e_\mu|^2]$ is bounded uniformly over $\mu$. Hence, reasoning as above, we get
\begin{equation}
\label{e:3}
\E\Ll[ \sum_{e \in E\p,  \un{e} = 0} \xi \cdot A\p(\na \phi^e_\mu - \na \phi\z_\mu) \Rr] = p \sum_{e: \un{e} = 0} \E\Ll[ \xi \cdot A^e(\na \phi^e_\mu - \na \phi\z_\mu) \Rr] + O(p^2).
\end{equation}
We now consider the second term in \eqref{e:2}. For an edge $e$ such that $\un{e} \neq 0$, the event $e \in E\p$ is independent of the value of $A\p$ (at the origin). Hence, we can rewrite the second term in \eqref{e:2} as
$$
p \sum_{e : \un{e} \neq 0} \E[\xi \cdot A\p(\na \phi^e_\mu - \na \phi\z_\mu)],
$$
which itself can be rewritten as
$$
p \E[\xi \cdot A\p D\psi_\mu] - p \sum_{e : \un{e} = 0}\E[\xi \cdot A\p(\na \phi^e_\mu - \na \phi\z_\mu)],
$$
where $\psi_\mu$ was introduced in Proposition~\ref{p:defcoef}. Arguing as above, we see that, on the one hand,
$$
\sum_{e : \un{e} = 0}\E[\xi \cdot A\p(\na \phi^e_\mu - \na \phi\z_\mu)] = \sum_{e : \un{e} = 0}\E[\xi \cdot A\z(\na \phi^e_\mu - \na \phi\z_\mu)] + O(p),
$$
while on the other hand, since $D\psi_\mu$ remains bounded in $L^2(\un{\O})$ by Proposition~\ref{p:defcoef},
$$
\E[\xi \cdot A\p D\psi_\mu] = \E[\xi \cdot A\z D\psi_\mu] + O(p).
$$
We have thus shown that
\begin{multline*}
\E\Ll[ \sum_{e \in E\p,  \un{e} \neq 0} \xi \cdot A\p(\na \phi^e_\mu - \na \phi\z_\mu) \Rr] \\
= p\E[\xi \cdot A\z D\psi_\mu] - p\sum_{e : \un{e} = 0}\E[\xi \cdot A\z(\na \phi^e_\mu - \na \phi\z_\mu)] + O(p^2).
\end{multline*}
Combining this with \eqref{e:approx}, \eqref{e:approx2}, \eqref{e:1}, \eqref{e:part1}, \eqref{e:2} and \eqref{e:3}, we thus obtain
\begin{equation}
\label{e:presque}
\xi \cdot \Ah\p \xi = \E[\xi \cdot A\z(\xi + \na {\phi}_\mu\z)] + p \ a_1\z(\mu) + o(p),
\end{equation}
where we introduced
\begin{multline}
\label{defa1zmu}
a_1\z(\mu)=  \sum_{e : \un{e} = 0} \Ll(\E[\xi \cdot A^e(\xi + \na {\phi}_\mu^e)] - \E[\xi \cdot A\z(\xi + \na {\phi}_\mu\z)] \Rr) + \\
\E[\xi \cdot A\z D\psi_\mu] - \sum_{e : \un{e} = 0}\E[\xi \cdot A\z(\na \phi^e_\mu - \na \phi\z_\mu)].
\end{multline}
Using the definition of $\psi_\mu$, one can observe that this definition coincides with that given in \eqref{defa1mu}. Using Theorem~\ref{t:approx} again, and recalling \eqref{fixmu}, we see that $\E[\xi \cdot A\z(\xi + \na {\phi}_\mu\z)] = \xi \cdot \Ah\z \xi + o(p)$. In order to conclude the proof, it thus suffices to show that $a_1\z(\mu)$ converges to a constant as $\mu$ tends to $0$. But this is a consequence of Propositions~\ref{p:corrlim} and \ref{p:defcoef}.
\end{proof}

%
%
%
%
%
%
%
%
\section{Approximation by periodization}
\label{s:periodiz}
The aim of this section is to give alternative characterizations of $a_1\z$, based on computing correctors on periodizations of the medium. We recall that we write $B_n = \{-n, \ldots, n\}^d$, and we let
$\B_n = \{ e \in \B : \un{e} \in B_n\}$. 
We define the periodized environment $\on \in \un{\O}$ by letting, for every $e \in \B$,
$(\on)_e = \omega_{e+x}$, where $x$ is the unique element of $(2n+1)\Z^d$ such that $e+x \in \B_n$. By the Lax-Milgram lemma and the Poincaré inequality (or by basic linear-algebra considerations), for every $\un{\o} \in \un{\O}$, there exists a unique $B_n$-periodic function $\phi\z_n(\cdot, \on)$ with zero average over $B_n$ such that for every $x \in \Z^d$,
$$
-\na^* \cdot A\z (\xi + \na \phi\z_n)(x,\on) = 0.
$$
For the same reason, for every $e \in \B_n$, there exists a unique $B_n$-periodic function $\phi_n^e(\cdot,\on)$ with zero average over $B_n$ such that for every $x \in \Z^d$,
$$
-\na^* \cdot A^e (\xi + \na \phi^e_n)(x,\on) = 0.
$$
We write $\E_n[f]$ as a shorthand for $\E[f(\on)]$, whenever this is well-defined. As usual, if the function is of the form $f(x,\on)$, $x \in \Z^d$, we interpret $\E_n[f]$ to mean $\E[f(0,\on)]$.

\begin{prop}[approximation of $a_1\z$ by periodization]
\label{p:periodiz}
The coefficient $a_1\z$ appearing in Theorem~\ref{t:main0} is the limit of $a_1\z(n)$ as $n$ tends to infinity, where
\begin{equation}
\label{defa1n}
a_1\z(n) = \sum_{e \in \B_n} \Ll( \E_n[\xi \cdot A^e (\xi + \na \phi^e_n)]  - \E_n[\xi \cdot A\z (\xi + \na \phi\z_n)] \Rr).
\end{equation}
\end{prop}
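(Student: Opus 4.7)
The plan is to mirror the proof of Theorem~\ref{t:main0}, with periodization at scale $n$ playing the role of the zero-order regularization by $\mu$. The key objects on the torus are the periodized correctors $\phi\z_n$ and $\phi^e_n$, together with the periodized ``full correction'' $\psi_n(\on) = \sum_{e \in \B_n} (\phi^e_n - \phi\z_n)(0,\on)$. I would first rewrite $a_1\z(n)$ in a form paralleling \eqref{defa1zmu}, and then pass to the limit $n \to \infty$ term by term.

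The first step is the rewriting. Using that $A^e(0,\cdot) = A\z(0,\cdot)$ whenever $\un{e} \neq 0$, one splits the sum defining $a_1\z(n)$ into contributions from edges with $\un{e} = 0$ and those with $\un{e} \neq 0$. Grouping the latter through $\psi_n$ and compensating for the fact that $A^e$ rather than $A\z$ appears for edges at the origin (just as in the derivation leading to \eqref{defa1zmu}), one obtains
\begin{align*}
a_1\z(n)
&= \sum_{e\,:\,\un{e}=0} \bigl(\E_n[\xi \cdot A^e(\xi + \na \phi^e_n)] - \E_n[\xi \cdot A\z(\xi + \na \phi\z_n)]\bigr) \\
&\quad + \E_n[\xi \cdot A\z \na \psi_n] - \sum_{e\,:\,\un{e}=0} \E_n[\xi \cdot A\z(\na \phi^e_n - \na \phi\z_n)],
\end{align*}
which is the exact analog of \eqref{defa1zmu}.

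Next, for each fixed edge $e \in \B$, I would show that $\na \phi\z_n(0,\on)$ and $\na \phi^e_n(0,\on)$ converge in $L^2(\un{\O})$ to $\na \phi\z(0,\un{\o})$ and $\na \phi^e(0,\un{\o})$, respectively. The first statement is classical. For the second, I would use the periodized analog of Lemma~\ref{l:one},
$$
\phi^e_n(x,\on) - \phi\z_n(x,\on) = -\na_2 G^e_n(x,\un{e}) \cdot C^e(\un{e})\bigl(\xi + \na \phi\z_n(\un{e},\on)\bigr),
$$
where $G^e_n$ is the Green function on the torus for $-\na^* \cdot A^e \na$, and invoke the pointwise convergence $\na_2 G^e_n \to \na_2 G^e$, which follows from the heat-kernel bounds of Propositions~\ref{parabolic} and \ref{p:estimqetoile} since image points under the periodization contribute only $O(\exp(-cn^2/t))$ at time $t$ and are negligible after integration.

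Finally, I would establish the periodized analog of Proposition~\ref{p:defcoef}: the gradient $\na \psi_n$ converges to the unique $\chi \in L^2_\na$ solving \eqref{e:defcoef}, in $L^2(\un{\O})$. One checks that $\psi_n$ solves $-\na^* \cdot A\z \na \psi_n = \na^* \cdot F_n$ for a right-hand side $F_n$ built from the $C^e(\xi + \na \phi^e_n)$, shows $F_n \to F$ in $L^2(\un{\O})$ (with $F$ as in \eqref{defF}) using the previous step and the exponential decay of $C^e(\xi + \na \phi^e)$ in $|\un{e}|$ coming from Remark~\ref{r:phie} and Proposition~\ref{p:greenaverage}, and applies the energy estimate of Theorem~\ref{t:station} to promote $L^2$-convergence of the right-hand sides to $L^2$-convergence of the gradients. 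Combining the three steps, each term in the rewriting of $a_1\z(n)$ converges to the matching term in \eqref{defa1zmu} as $\mu \to 0$, so by Theorem~\ref{t:main0}, $a_1\z(n) \to a_1\z$. The hard part will be this last step: reconciling the finite sum over $\B_n$ defining $F_n$ with the infinite sum defining $F$ requires good control on edges near the boundary of $\B_n$, where periodization artifacts compete with the decay of $\na \phi^e - \na \phi\z$; the needed uniform exponential decay follows from Proposition~\ref{p:greenaverage}, but some care is required to combine this with $\na \phi^e_n \to \na \phi^e$ uniformly enough in $e$ to bound the tail of the sum in $L^2$.
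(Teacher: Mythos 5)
Your overall plan — rewrite $a_1\z(n)$ in the form \eqref{defa1zn}, then prove the three convergence statements, mirroring the proof of Theorem~\ref{t:main0} — is exactly the paper's strategy, and your rewriting of $a_1\z(n)$ is correct. Your treatment of part (2) via a periodized Green-function representation and a method-of-images convergence argument is a genuinely different route from the paper (which works in $L^2(\Z^d \times \un{\O})$, establishing strong convergence there via the energy identity and quoting \cite[Lemma~A.2]{analeb2}); it is plausible, although you would still need to handle the absence of a zero-order term on the torus (the periodic Green function is only defined modulo constants), which your sketch elides.

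The real gap is in part (3). You propose to show $F_n \to F$ in $L^2(\un{\O})$ and then ``apply the energy estimate of Theorem~\ref{t:station}'' to transfer this into $L^2$-convergence of the gradients, mimicking the end of the proof of Proposition~\ref{p:defcoef}. But that argument compares two solutions of equations with \emph{the same} operator $(\mu - D^* \cdot A\z D)$, differing only in their right-hand sides, so one may subtract and estimate via the spectral resolution of $\L$ on $L^2(\un{\O})$. Here, $\psi_n$ solves a stationary equation on the periodized environment $\on$ (a problem on the torus $\B_n$), whereas $\chi$ solves \eqref{e:defcoef} on the full environment $\un{\o}$; the two operators $-D^* \cdot A\z D$ — one acting on $\B_n$-periodic environments, one on all of $\un{\O}$ — are not the same, so there is nothing to subtract and the spectral bound \eqref{estimH-1} cannot be applied directly. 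The paper sidesteps this by redoing the weak-compactness-plus-energy-identity argument from part (1): bound $\E_n[|D\psi_n|^2]$ uniformly, extract a weakly convergent subsequence, identify the limit by testing against functions depending on finitely many edges (this is where $F_n \to F$ enters), and upgrade to strong convergence via the energy identity and Theorem~\ref{t:station} for uniqueness. That is the missing step in your plan.

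Separately, your closing worry about ``reconciling the finite sum over $\B_n$ defining $F_n$ with the infinite sum defining $F$'' is unfounded: because of the factor $C^e$, every summand in \eqref{defF} with $\un{e} \neq 0$ vanishes identically (the remark after Proposition~\ref{p:defcoef} points this out), so both $F$ and $F_n$ are finite sums over the $d$ edges at the origin, and there is no boundary-layer tail to control. The only input you need about the $C^e(\xi + \na\phi^e)$ is therefore part (2) for these finitely many edges — the exponential decay in $|\un{e}|$ from Remark~\ref{r:phie} and Proposition~\ref{p:greenaverage} is not needed here.
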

\begin{proof}
Defining
$$
\psi_n(\on) = \sum_{e \in \B_n} (\phi_n^e - \phi_n\z)(0,\on),
$$
we note that, in close resemblance with \eqref{defa1zmu},
\begin{multline}
\label{defa1zn}
a_1\z(n)=  \sum_{e : \un{e} = 0} \Ll(\E_n[\xi \cdot A^e(\xi + \na {\phi}_n^e)] - \E_n[\xi \cdot A\z(\xi + \na {\phi}_n\z)] \Rr) + \\
\E_n[\xi \cdot A\z D\psi_n] - \sum_{e : \un{e} = 0}\E_n[\xi \cdot A\z(\na \phi^e_n - \na \phi\z_n)].
\end{multline}
It thus suffices to prove the following three statements.
\begin{enumerate}
	\item The function $\un{\o} \mapsto \na \phi_n\z(0,\on)$ converges in $L^2(\un{\O})$ to $\na \phi\z(0,\cdot)$ as $n$ tends to infinity.
	\item The function $\un{\o} \mapsto \na \phi_n^e(0,\on)$ converges in $L^2(\un{\O})$ to $\na \phi^e(0,\cdot)$ as $n$ tends to infinity.
	\item The function $\un{\o} \mapsto D\psi_n(\on)$ converges in $L^2(\un{\O})$, as $n$ tends to infinity, to the function $\chi$ defined in Proposition~\ref{p:defcoef}.
\end{enumerate}
Part (1) is classical (see for instance \cite{capiof, boupia}), but it will be useful to recall a proof. We begin by observing that the function $\phi_n\z$ is stationary, in the sense that for every $\un{\o} \in \un{\O}$ and every $x \in \Z^d$, we have $\phi_n\z(x,\on) = \phi_n\z(0,\theta_x \ \on)$. As a consequence, we may identify $\phi_n\z$ with the function $\phi_n\z(0,\cdot)$. We then observe that
\begin{equation}
\label{period-energy}
\E_n\Ll[ D\phi_n\z \cdot A\z D\phi_n\z \Rr] = -\E_n\Ll[ D\phi_n\z \cdot A\z \xi \Rr],
\end{equation}
and thus, by the Cauchy-Schwarz inequality,
$$
c_- \E_n\Ll[|D\phi_n\z|^2\Rr] \le \E_n\Ll[ D\phi_n\z \cdot A\z D\phi_n\z \Rr] \le c_+ |\xi| \  \E_n\Ll[|D\phi_n\z|^2\Rr]^{1/2}.
$$
It follows from this inequality that $\E_n\Ll[|D\phi_n\z|^2\Rr]$ is bounded uniformly over $n$, and thus that $\un{\o} \mapsto D\phi_n\z(0,\on)$ converges weakly in $L^2(\un{\O})$ along a subsequence, to some $\td{\chi} \in L^2(\un{\O})$. For notational simplicity, we keep implicit the fact that we now consider $(D\phi_n\z)$ only along this subsequence. In order to prove part (1), it suffices to show that the weak convergence is actually strong convergence in $L^2(\un{\O})$, and that $\td{\chi} = \na \phi\z$. 

Note first that \eqref{period-energy} implies that (along the subsequence)
\begin{equation}
\label{csqenergy}
\lim_{n \to \infty} \E_n\Ll[ D\phi_n\z \cdot A\z D\phi_n\z \Rr] = -\E\Ll[\td{\chi} \cdot A\z \xi\Rr].
\end{equation}
Now, take any bounded function $f : \un{\O} \to \R$ that depends on the value of the environment only at a finite number of edges
. We have
$$
\E_n\Ll[Df \cdot A\z D\phi_n\z \Rr] = -\E_n\Ll[Df \cdot A\z \xi\Rr].
$$
Passing to the limit (along the subsequence), we get that
\begin{equation}
\label{weaktdchi}
\E\Ll[Df \cdot A\z \td{\chi} \Rr] = -\E\Ll[Df \cdot A\z \xi\Rr].
\end{equation}
The identity above can then be extended to arbitrary $f \in L^2(\un{\O})$. Replacing $f$ by $\phi_n\z$ in the above identity, and letting $n$ tend to infinity, we thus learn that
\begin{equation}
\label{weaktdchi2}
\E\Ll[\td{\chi} \cdot A\z \td{\chi} \Rr] = -\E\Ll[\td{\chi} \cdot A\z \xi\Rr].
\end{equation}
To see strong convegence of $D\phi_n$ to $\td{\chi}$ in $L^2(\un{\O})$, it now suffices to write 
\begin{multline*}
\E_n\Ll[(D\phi_n - \td{\chi}) \cdot A\z (D\phi_n - \td{\chi}) \Rr] \\ = \E_n\Ll[D\phi_n \cdot A\z D\phi_n \Rr] + \E_n\Ll[\td{\chi} \cdot A\z \td{\chi} \Rr] - 2 \E_n\Ll[D\phi_n \cdot A\z \td{\chi} \Rr],
\end{multline*}
and observe that the right-hand side tends to zero by \eqref{csqenergy} and \eqref{weaktdchi2}. This ensures that $\td{\chi} \in L^2_\na$, and by \eqref{weaktdchi}, $\td{\chi}$ satisfies 
$$
-D^* \cdot A\z \td{\chi} = D^* \cdot A\z \xi.
$$
By Theorem~\ref{t:station}, there is a unique such $\td{\chi}$, and it is $\na \phi^\circ$ (see Proposition~\ref{p:corrlim}).

Part (3) can be obtained in a similar way, noting that the function $\psi_n$ is stationary, in the sense that $\psi_n(x,\on) = \psi_n(0, \theta_x \ \on)$.

For part (2), we cannot argue in the very same way, since $A^e$ is not stationary. We note instead that, letting $\ov{\phi}_n^e = \phi_n^e - \phi_n\z$, we have
$$
-\na^* \cdot A^e \na \ov{\phi}_n^e = \na^* \cdot C^e (\xi + \na \phi_n\z)
$$
at every point $(x,\on)$, with $x \in \Z^d$ and $\un{\o} \in \un{\O}$. We can then simply quote \cite[Lemma~A.2]{analeb2}, or argue as in part (1), with the difference that the arguments need to be carried out over the physical space $\Z^d$ instead of the space of environments. We get that $(x,\un{\o}) \mapsto \1_{B_n} \na \ov{\phi}_n^e(x,\on)$ converges in $L^2(\Z^d \times \un{\O})$ to $\na\ov{\phi}_\infty^e \in L^2(\Z^d\times \un{\O})$ as $n$ tends to infinity, where $\na \ov{\phi}_\infty^e$ satisfies
\begin{equation}
\label{e:ananth}	
-\na^* \cdot A^e \na \ov{\phi}_\infty^e = \na^* \cdot C^e (\xi + \na \phi\z) \quad  (\text{in } \Z^d \times \un{\O})
\end{equation}
(and the space $L^2(\Z^d \times \un{\O})$ is defined with respect to the measure obtained as the product of the counting measure over $\Z^d$ and $\P$). This identifies $\na \ov{\phi}_\infty^e$ as
$$
\na \ov{\phi}_\infty^e(x) = - \Ll(\na_1 \na_2 G^e(x,\un{e})\Rr)  C^e(\un{e}) (\xi + \na \phi\z(\un{e})).
$$
Comparing this with the formula for $\na \phi^e$ given in \eqref{e:naphie}, we see that we are done.
\end{proof}

The question arises as to whether the sum involving all $(\phi_n^e)_{e \in \B_n}$ in the formula defining $a\z_1(n)$ can be replaced by a spatial average involving only $(\phi_n^e)_{e :\un{e} = 0}$. This question is important since in practice, one would like to compute as few $\phi_n^e$'s as possible. We define
\begin{equation}
\label{defova}
\ov{a}_n(\on) = \sum_{x \in B_n} \sum_{e : \un{e} = 0} \Big( \xi \cdot A^{e}(\xi + \na \phi_n^e) -  \xi \cdot A\z(\xi + \na \phi_n\z) \Big)(x,\on).
\end{equation}
\begin{prop}[approximation of $a_1\z$ by periodization and spatial average]
\label{p:spatial-average}
The random variable $\un{\o} \mapsto \ov{a}_n(\on)$ converges in $L^1(\un{\O})$ to the random variable
$$
\sum_{e : \un{e} = 0} (\xi + \na \phi\z) \cdot C^e(\xi + \na \phi^e) \ (0,\cdot).
$$
Moreover, the coefficient $a_1\z$ given by Theorem~\ref{t:main0} satisfies
\begin{equation*}
a_1\z = \sum_{e : \un{e} = 0} \E\Ll[(\xi + \na \phi\z) \cdot C^e(\xi + \na \phi^e) \Rr] = \sum_{e : \un{e} = 0} \lim_{n \to \infty} \E_n\Ll[ (\xi + \na \phi_n\z) \cdot C^e(\xi + \na \phi_n^e)  \Rr].
\end{equation*}
\end{prop}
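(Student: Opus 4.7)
The plan is to reduce $\ov{a}_n(\on)$ to a single local expression at the origin via discrete integration by parts on the periodic torus $B_n$, identify $\E[\ov{a}_n]$ with $a_1\z(n)$ of Proposition~\ref{p:periodiz} by stationarity and reindexing, and pass to the limit using the $L^2(\un{\O})$ convergences already established in the proof of that proposition.

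For the local reduction, fix $e$ with $\un{e} = 0$ and split the summand as
\begin{equation*}
\xi \cdot A^e(\xi + \na \phi_n^e) - \xi \cdot A\z(\xi + \na \phi_n\z) = \xi \cdot C^e(\xi + \na \phi_n^e) + \xi \cdot A\z \na(\phi_n^e - \phi_n\z).
\end{equation*}
The first summand is supported at $\un{e} = 0$ and contributes $\xi \cdot C^e(0)(\xi + \na \phi_n^e(0, \on))$. For the second, I would perform two discrete integrations by parts on the torus (there are no boundary terms thanks to periodicity). Using the symmetry of $A\z$ and the equation for $\phi_n\z$, the first IBP reduces it to $-\sum_x \na \phi_n\z \cdot A\z \na(\phi_n^e - \phi_n\z)$; a second IBP combined with the identity $\na^* \cdot A\z \na(\phi_n^e - \phi_n\z) = -\na^* \cdot [C^e(\xi + \na \phi_n^e)]$ (obtained by subtracting the equations for $\phi_n\z$ and $\phi_n^e$) yields $\na \phi_n\z(0, \on) \cdot C^e(0)(\xi + \na \phi_n^e(0, \on))$. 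Summing over the $d$ edges with $\un{e} = 0$, I obtain
\begin{equation*}
\ov{a}_n(\on) = \sum_{e : \un{e} = 0}(\xi + \na \phi_n\z(0, \on)) \cdot C^e(\xi + \na \phi_n^e(0, \on)).
\end{equation*}

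For the identification of $\E[\ov{a}_n]$ with $a_1\z(n)$, the key ingredient is the covariance $\na \phi_n^{e'}(x, \on) = \na \phi_n^{e' - x}(0, \theta_x \on)$ (where $e' - x$ is the edge $e'$ translated by $-x$), which follows by uniqueness from the fact that $y \mapsto \phi_n^{e'}(y + x, \on)$ is a $B_n$-periodic zero-average solution for the environment $\theta_x \on$ with perturbation at edge $e' - x$. Since the distribution of the periodized environment is invariant under $\theta_x$, this yields
\begin{equation*}
\E\Ll[(\xi \cdot A^{e'}(\xi + \na \phi_n^{e'}))(x, \on)\Rr] = \E_n\Ll[\xi \cdot A^{e' - x}(\xi + \na \phi_n^{e' - x})\Rr]
\end{equation*}
for every $e' \in \B$ and $x \in \Z^d$; together with the stationarity of the $A\z$ term and the bijection $(e, x) \mapsto e - x$ from $\{(e, x) : \un{e} = 0,\ x \in B_n\}$ to $\B_n$, the double sum defining $\E[\ov{a}_n]$ reindexes precisely to $a_1\z(n)$.

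Finally, I would combine the $L^2(\un{\O})$ convergences $\na \phi_n\z(0, \on) \to \na \phi\z(0, \cdot)$ and $\na \phi_n^e(0, \on) \to \na \phi^e(0, \cdot)$ from the proof of Proposition~\ref{p:periodiz} with the uniform bound on $C^e$ and the Cauchy--Schwarz inequality to obtain the claimed $L^1(\un{\O})$ convergence of $\ov{a}_n$. Passing to expectations, the identification $\E[\ov{a}_n] = a_1\z(n)$ together with $a_1\z(n) \to a_1\z$ (Proposition~\ref{p:periodiz}) yields $a_1\z = \sum_{e : \un{e} = 0} \E[(\xi + \na \phi\z) \cdot C^e(\xi + \na \phi^e)]$, and the third identity follows from $L^1$ convergence of each summand. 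I expect the integration-by-parts reduction of paragraph~2 to be the delicate step; the remaining identifications are bookkeeping once the local form of $\ov{a}_n$ is in hand.
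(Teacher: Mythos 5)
Your proof is correct and takes essentially the same route as the paper: you reduce $\ov{a}_n(\on)$ to the local form $\sum_{e:\un{e}=0}(\xi+\na\phi_n\z)\cdot C^e(\xi+\na\phi_n^e)(0,\on)$ by periodic integration by parts against the two cell-problem equations, identify $\E[\ov{a}_n]=a_1\z(n)$ by stationarity of the periodized medium and the reindexing bijection $(x,e)\mapsto e-x$, and then pass to the limit via the $L^2(\un{\O})$ convergences from Proposition~\ref{p:periodiz}. The only cosmetic difference is that the paper reaches the local identity slightly more compactly by testing the equation for $\phi_n^e$ against $\phi_n\z$ and the equation for $\phi_n\z$ against $\phi_n^e$, whereas you integrate by parts twice on $\xi\cdot A\z\na(\phi_n^e-\phi_n\z)$ after peeling off $C^e$; the two computations are equivalent.
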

\begin{proof}
Let $e$ be such that $\un{e} = 0$. 
By the definition of $\phi_n^e$ and $B_n$-periodicity,
$$
\sum_{x \in B_n} \xi \cdot A^e(\xi + \na \phi_n^e)(x,\on) = \sum_{x \in B_n} (\xi + \na \phi_n\z) \cdot A^e(\xi + \na \phi_n^e)(x,\on).
$$
We can decompose $A^e$ into $A\z + C^e$, and then observe that the definition of $\phi_n\z$ implies that
$$
\sum_{x \in B_n} (\xi + \na \phi_n\z) \cdot A\z(\xi + \na \phi_n^e)(x,\on) = \sum_{x \in B_n} (\xi + \na \phi_n\z) \cdot A\z \xi (x,\on).
$$
Since $A\z$ is a symmetric matrix, we have obtained that
\begin{equation}
\label{defovabis}
\ov{a}_n(\on) = \sum_{e : \un{e} = 0}(\xi + \na \phi_n\z) \cdot C^e(\xi + \na \phi_n^e)(0,\on).
\end{equation}
The convergence announced in the proposition then follows from the fact that $\un{\o} \mapsto \na \phi_n\z(0,\on)$ and $\un{\o} \mapsto \na \phi_n^e(0,\on)$ converge in $L^2(\un{\O})$ to $\na \phi\z(0,\cdot)$ and $\na \phi^e(0,\cdot)$ respectively. The second part of the proposition follows noting that $\E_n[\ov{a}_n] = a_1\z(n)$, see \eqref{defa1n}.
\end{proof}

%
%
%
%
%
%
%
%
\section{First-order expansion around every point}
\label{s:expansion}

The aim of this section is to generalize Theorem~\ref{t:main0} by giving a first-order expansion of $\Ah\ppp$ as $p$ tends to $0$, for every $\ov{p} \in [0,1]$. 

In order to state the result, it is convenient to introduce some new notation. For every $\ov{p} \in [0,1]$ and every $e \in \B$, we let $A\ppep = A^{E\pp \cup \{e\}}$ (recall that $A^E$ was defined in \eqref{defAE}), $\phi_\mu\ppep = \phi_\mu^{E\pp \cup \{e\}}$ (defined in \eqref{defphiE}), and so on. Similarly, we define $A\ppem = A^{E\pp \setminus \{e\}}$,  $\phi_\mu\ppem = \phi_\mu^{E\pp \setminus \{e\}}$, and so on.

\begin{thm}[first-order expansion around every point]
\label{t:main}
Let $\ov{p} \in [0,1]$. There exists $a_1\pp \in \R$ such that, as $p$ tends to $0$,
\begin{equation}
\label{e:main}
\xi \cdot \Ah\ppp \xi = \xi \cdot \Ah\pp \xi + p \ a_1\pp + o(p).
\end{equation}
Moreover, the coefficient $a_1\pp$ can be defined as the limit of $a_1\pp(\mu)$ as $\mu$ tends to $0$, where $a_1\pp(\mu)$ is given by
\begin{equation}
\label{defa1mupp}
a_1\pp(\mu) = \sum_{e \in \B} \Ll(\E[\xi \cdot A\ppep(\xi + \na {\phi}_\mu\ppep)] - \E[\xi \cdot A\ppem(\xi + \na {\phi}_\mu\ppem)] \Rr).
\end{equation}
Alternative characterizations of $a_1\pp$ are given in Remark~\ref{r:1} below.
\end{thm}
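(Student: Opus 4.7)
The plan is to mimic the proof of Theorem~\ref{t:main0}, with the unperturbed reference environment $\o\z$ systematically replaced by $\o\pp$ throughout. First, fix $\mu = p^{1+\eps}$ for some $\eps > 0$ small. Two applications of Theorem~\ref{t:approx}, one at the parameter $\ov{p}+p$ and one at the parameter $\ov{p}$, yield
$$
\xi \cdot \Ah\ppp \xi = \E[\xi \cdot A\ppp(\xi + \na \phi_\mu\ppp)] + o(p), \qquad \xi \cdot \Ah\pp \xi = \E[\xi \cdot A\pp(\xi + \na \phi_\mu\pp)] + o(p),
$$
reducing the task to comparing these two right-hand sides at order $p$.

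Couple $\o\pp$ and $\o\ppp$ monotonically via iid uniform variables $(U_e)_{e \in \B}$ on $[0,1]$, putting $E\pp = \{e : U_e \le \ov{p}\} \subset E\ppp = \{e : U_e \le \ov{p}+p\}$. With $F = E\ppp \setminus E\pp$, the marginal law of $F$ satisfies $\P[e \in F] = p$ and $\P[e, e' \in F] = p^2$ for $e \ne e'$. Introduce the linear approximation around $\phi_\mu\pp$:
$$
\td{\phi}_\mu\ppp = \phi_\mu\pp + \sum_{e \in F}\Ll(\phi_\mu\ppep - \phi_\mu\pp\Rr).
$$
The main technical step is to prove an analog of Theorem~\ref{t:approxcorr}, namely
$$
\E\Ll[\Ll|\na \phi_\mu\ppp - \na \td{\phi}_\mu\ppp\Rr|\Rr] \le c p^2 \mu^{-1+\beta} + e^{-\mu^{-\gamma}}.
$$
This step carries the main technical weight. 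However, since the Green function estimates of Section~\ref{s:green}, the corrector moment bound of Theorem~\ref{t:corr-moments}, and the perturbation lemmas (Lemmas~\ref{l:one}, \ref{l:several}, \ref{l:greenperturbed}) are all uniform in the base perturbation parameter, it is obtained by line-by-line repetition of the argument in Section~\ref{s:linear} with $\o\z$ replaced by $\o\pp$ and $E\p$ replaced by $F$. The slight correlation between $E\pp$ and $F$ (their disjointness under the monotone coupling) does not affect the key moment computations $\P[e \in F] = p$ and $\P[e \ne e' \in F] = p^2$, so it does not obstruct the argument.

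With the linearization in hand and $\eps$ chosen small enough, expand
$$
\E[\xi \cdot A\ppp(\xi + \na \td{\phi}_\mu\ppp)] = \E[\xi \cdot A\ppp(\xi + \na \phi_\mu\pp)] + \E\Ll[\sum_{e \in F} \xi \cdot A\ppp \na(\phi_\mu\ppep - \phi_\mu\pp)\Rr].
$$
Each term is handled as in the proof of Theorem~\ref{t:main0}: the contribution from edges attached to the origin is a finite sum with $O(p^2)$ correction from the event that two such edges simultaneously lie in $F$, while the contribution from edges with $\un{e} \ne 0$ is collected into $p\,\E[\xi \cdot A\pp\,D\psi_\mu\pp] + O(p^2)$, where $\psi_\mu\pp := \sum_{e \in \B}(\phi_\mu\ppep - \phi_\mu\pp)$ is the stationary analog of the $\psi_\mu$ of Proposition~\ref{p:defcoef}. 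Repeated use of the conditional decomposition
$$
\E[\xi \cdot A\pp(\xi + \na \phi_\mu\pp)] = (1-\ov{p})\,\E[\xi \cdot A\ppem(\xi + \na \phi_\mu\ppem)] + \ov{p}\,\E[\xi \cdot A\ppep(\xi + \na \phi_\mu\ppep)]
$$
(obtained by conditioning on $b\pp_e$ for a fixed edge $e$ attached to the origin) converts the asymmetric ``add-an-edge'' differences produced by the expansion into the symmetric $\ppep$-minus-$\ppem$ differences that define $a_1\pp(\mu)$ in \eqref{defa1mupp}. Collecting terms yields $\xi \cdot \Ah\ppp \xi = \xi \cdot \Ah\pp \xi + p\,a_1\pp(\mu) + o(p)$.

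Finally, to conclude \eqref{e:main}, it remains to verify that $a_1\pp(\mu)$ converges to some $a_1\pp \in \R$ as $\mu \to 0$. This reduces to $L^2(\un{\O})$-convergence of $\na \phi_\mu\pp$, $\na \phi_\mu\ppep$, and $D\psi_\mu\pp$, which is proved by adapting Propositions~\ref{p:corrlim} and \ref{p:defcoef} with $\o\z$ replaced by $\o\pp$. The variational arguments of Section~\ref{s:station} use only ellipticity and stationarity of the coefficients, both preserved under this substitution, so the convergence arguments transfer directly.
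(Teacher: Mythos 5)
Your proposal follows the monotone-coupling strategy (generate $E\pp$ and $E\ppp$ from the same uniforms and set $F = E\ppp \setminus E\pp$), which is precisely the approach the paper explicitly considers \emph{and rejects} in the paragraph preceding its proof of Theorem~\ref{t:main}. The dismissal of the coupling issue in one sentence (``the slight correlation between $E\pp$ and $F$ \dots does not obstruct the argument'') papers over the real obstruction. Here is why it matters. In the proof of Theorem~\ref{t:approxcorr}, the key step after \eqref{e:thestep} is to bound the conditional expectation $\mcl{E}(e,e',E)$ uniformly over $E$. Two places crucially drop the conditioning on $E\p_\mu = E$: once to write $\E[\,\cdot\mid E\p_\mu=E]=\E[\,\cdot\,]$ for $|\na_1\na_2 G^e_\mu(\un{e}',\un{e})|^{1/\zeta}$, and once for $|\xi+\na\phi\z_\mu(\un{e})|^{1/(1-\zeta)}$. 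This is legitimate there because $G^e_\mu$ and $\phi\z_\mu$ are functions of $(\o\z,\o^{(1)}_e)$ only, which are independent of the Bernoulli field. In your adaptation these become $G^{\ppep}_\mu$ and $\phi_\mu\pp$, both of which depend on $\o\pp$ — and under your monotone coupling $\o\pp$ is \emph{not} independent of the event $\{F_\mu=E\}$, since $e\in F$ forces $e\notin E\pp$. The conditional law of $\o\pp$ given $F_\mu=E$ is a non-stationary product measure that differs from $\nu\pp$ on every edge of $B_{\mu^{-\gamma}}$, so you cannot simply drop the conditioning and quote Lemma~\ref{l:greenperturbed} or Theorem~\ref{t:corr-moments}, whose constants are stated for the stationary i.i.d.\ law $\P$. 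Verifying that those bounds hold uniformly over all relevant conditional laws is a genuine additional task, not a line-by-line repetition. (The paper's Section~\ref{s:error} in effect repairs exactly this issue by using the higher-moment bounds of Marahrens--Otto to decouple the indicator $\1_{e,e'\in E\p}$ from the Green-function factors by H\"older; that is the fix you would need if you insist on the coupling route.)

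The paper's own proof sidesteps this entirely: it introduces \emph{fresh} Bernoulli variables $\td{b}_e\tp$, independent of everything, with parameter $\td p = p/(1-\ov p)$, so that $\td\o\ppvp$ has the same law as $\o\ppp$. Since $\td b\tp$ is independent of $\o\pp$, Theorem~\ref{t:main0} applies verbatim with $\o\pp$ as the base environment and $\td p$ as the small parameter, giving an expansion with the ``add-an-edge'' coefficient $\td a_1\pp$. The final conversion to the symmetric ``$\ppep$ minus $\ppem$'' form in \eqref{defa1mupp} then follows — much as in your last display — from observing that the summand vanishes when $e\in E\pp$, is independent of $\{e\in E\pp\}$ otherwise, and that the factor $\P[e\notin E\pp]=1-\ov p$ cancels the reparametrization $\td p = p/(1-\ov p)$. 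You should replace the monotone coupling by this independent-coupling construction, or else prove the uniform conditional versions of the estimates you invoke.
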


One may attempt to prove this result by defining a coupling of the Bernoulli random variables such that if $p_1 \le p_2$, then $E^{(p_1)} \subset E^{(p_2)}$, and then try to adapt the proof of Theorem~\ref{t:main0}. This approach leads to a serious difficulty when trying to estimate the left-hand side of \eqref{e:thestep}. Indeed, this estimate crucially relies on the fact that the non-perturbed environment is independent of the events $e \in E_\mu\p$. A naive adaptation of this proof to cover the expansion of $\Ah\ppp$ as $p$ tends to $0$ would thus require that the environment $\o\pp$ be independent of the events $e \in E_\mu\ppp \setminus E_\mu\pp$, and of course, this independence does not hold. 

This problem can be overcome with a control of higher moments of the gradients of the Green functions than that given by Proposition~\ref{p:greenaverage}. Such results have been obtained very recently in \cite{marott}. We will describe and use these results in the next section, but for the problem at hand, a more direct proof can be obtained.

\begin{proof}[Proof of Theorem~\ref{t:main}]
It comes out of the above discussion that the decision as to which edges are to be perturbed should be taken independently of the environment. We thus introduce new independent Bernoulli random variables $(\td{b}_e\p)_{e \in \B}$ of parameter $p \in [0,1]$, independent of everything else, defined on the same probability space (possibly enlarging it)
. For $\ov{p} < 1$ and $p \in [0,1-\ov{p}]$, we further define
$$
\td{p} = \frac{p}{1-\ov{p}} \qquad \text{and} \qquad \td{\o}_e\ppvp = (1- \td{b}_e\tp) \ \omega\pp_e + \td{b}_e\tp \ \omega^{(1)}.
$$
Recalling that we write $\nu\p$ for the law of $\o_e\p$, we see that the law of $\td{\o}_e\ppvp$ is 
\begin{multline*}
(1-\td{p}) \nu\pp + \td{p} \nu^{(1)} = (1-\td{p})\Ll[ (1-\ov{p}) \nu^{(0)} + \ov{p} \nu^1 \Rr] + \td{p} \nu^{(1)} \\ = (1-\ov{p} - p) \nu^{(0)} + (\ov{p} + p) \nu^{(1)} = \nu\ppp.
\end{multline*}
In other words, the law of $\td{\o}_e\ppvp$ is that of $\o_e\ppp$, and as a consequence, the law of $\td{\o}\ppvp := (\td{\o}_e\ppvp)_{e \in \B}$ is that of $\o\ppp$.

We can apply Theorem~\ref{t:main0} for the environment $\td{\o}\ppvp$, seen as a Bernoulli perturbation of $\o\pp$, with the perturbed environment taken from $\o^{(1)}$ and the perturbation parameter being $\td{p}$. Since the homogenized matrix of $\td{\o}\ppvp$ is that of $\o\ppp$, we get that as $\td{p} \ge 0$ tends to $0$,
$$
\xi \cdot \Ah\ppp \xi = \xi \cdot \Ah\pp \xi + \td{p} \ \td{a}_1\pp + o(\td{p}),
$$
where $\td{a}_1\pp$ is the limit as $\mu$ tends to $0$ of $\td{a}_1\pp(\mu)$ given by
$$
\td{a}_1\pp(\mu) = \sum_{e \in \B} \Ll(\E[\xi \cdot A\ppep(\xi + \na {\phi}_\mu\ppep)] - \E[\xi \cdot A\pp(\xi + \na {\phi}_\mu\pp)] \Rr).
$$
We now see that the quantity
$$
\xi \cdot A\ppep(\xi + \na {\phi}_\mu\ppep) - \xi \cdot A\pp(\xi + \na {\phi}_\mu\pp)
$$
is equal to $0$ if $e \in E\pp$, and otherwise is equal to 
$$
\xi \cdot A\ppep(\xi + \na {\phi}_\mu\ppep) - \xi \cdot A\ppem(\xi + \na {\phi}_\mu\ppem).
$$
Since this last quantity is independent of the event $e \in E\pp$, we obtain that
\begin{multline}
\E\Ll[\xi \cdot A\ppep(\xi + \na {\phi}_\mu\ppep) - \xi \cdot A\pp(\xi + \na {\phi}_\mu\pp)\Rr] \\
= \P\Ll[e \notin E\pp\Rr] \ \E\Ll[ \xi \cdot A\ppep(\xi + \na {\phi}_\mu\ppep) - \xi \cdot A\ppem(\xi + \na {\phi}_\mu\ppem) \Rr].
\end{multline}
Since $\P\Ll[e \notin E\pp\Rr] = 1-\ov{p}$, we obtain the desired result for $p \ge 0$ (i.e. for the right derivative of $\xi \cdot \Ah^{(\cdot)} \xi$). 

A similar reasoning can be followed if $\ov{p} \in (0,1]$ and $p \in [-\ov{p},0]$, by letting
$$
\td{p} = \frac{-p}{\ov{p}} \qquad \text{and} \qquad \td{\o}_e\ppvp = (1- \td{b}_e\tp) \ \omega\pp_e + \td{b}_e\tp \ \omega^{(0)}.
$$
The reasoning leads to the same formula for the left derivative, so the proof is complete.
\end{proof}

\begin{rem}
\label{r:1}
From the construction in the proof of Theorem~\ref{t:main} and Proposition~\ref{p:periodiz}, one can give an alternative characterization of the coefficient $a_1\pp$ appearing in Theorem~\ref{t:main}. Indeed, it is the limit of $a_1(n)$ as $n$ tends to infinity, for
$$
a_1(n) = \sum_{e \in \B_n} \Ll( \E_n\Ll[\xi \cdot A\ppep(\xi + \na \phi_n\ppep)\Rr] - \E_n\Ll[\xi \cdot A\ppem(\xi + \na \phi_n\ppem)\Rr]\Rr),
$$
where $\phi_n\ppep$ and $\phi_n\ppem$ are defined as $\phi_n\z$, but with respect to $A\ppep$ and $A\ppem$ respectively instead of $A\z$. 

Similarly, using Proposition~\ref{p:spatial-average}, we can obtain yet another characterization of $a_1\pp$, namely
\begin{equation}
\label{e:a1ppe}
a_1\pp =\sum_{e : \un{e} = 0} \E\Ll[ (\xi + \na \phi\ppem) \cdot C^e (\xi + \na \phi\ppep)\Rr],
\end{equation}
where $\na \phi\ppem(0,\cdot)$ and $\na \phi\ppep(0,\cdot)$ are the $L^2(\un{\O})$-limits of $\na \phi_n\ppem(0,\cdot)$ and $\na \phi_n\ppep(0,\cdot)$ respectively, as $n$ tends to infinity. Moreover,
\begin{equation}
\label{e:a1ppbis}
a_1\pp = \sum_{e : \un{e} = 0} \lim_{n \to \infty} \E_n\Ll[ (\xi + \na \phi_n\ppem) \cdot C^e (\xi + \na \phi_n\ppep) \Rr].
\end{equation}
\end{rem}

%
%
%
%
%
%
%
%
\section{Refined control of the error term}
\label{s:error}

The aim of this section is to obtain a sharper control of the error term $o(p)$ appearing in \eqref{e:main} of Theorem~\ref{t:main}. As is clear from the proof of Theorem~\ref{t:main}, in order to achieve this, it is in fact sufficient to control the error term in Theorem~\ref{t:main0}. The fact that Theorem~\ref{t:main0} gives only a weak control of the error term lies in Theorem~\ref{t:approxcorr}. The weak point in the argument leading to Theorem~\ref{t:approxcorr} is in the estimation of the left-hand side of \eqref{e:thestep}, where we crudely estimated the second mixed derivatives of $G_\mu\p$ using the pointwise inequality \eqref{greenpoint2} on the gradient of $G_\mu\p$. Although Proposition~\ref{p:greenaverage} shows that the $L^1$ norm of the second mixed derivative of $G_\mu\p$ has the same spatial decay as the homogeneous case, this is of no use when estimating the left-hand side of \eqref{e:thestep} because we have no control on the dependence between the second derivatives of $G_\mu\p$ and the events $e \in E_\mu\p$. 

This problem can however be overcome with a control of higher moments of the second derivatives of the Green function. Such results have been obtained very recently in the remarkable work \cite{marott}.
\begin{thm}[\cite{marott}]
\label{t:marott}
For every $q \ge 1$, there exists a finite $\C_q$ depending only on $q$, $d$ and the ellipticity constants $c_-, c_+$ (but otherwise not on the law of $\omega$, as long as it is a product measure as assumed throughout) such that for every $x \in \Z^d$ and $\mu \ge 0$,
\begin{equation}
\label{e:marott}
\E\Ll[|\na_1 \na_2 G_\mu(0,x)|^{q}\Rr]^{1/q} \le \frac{\C_q}{(1 \vee |x|)^d}.
\end{equation}
\end{thm}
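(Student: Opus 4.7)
The plan is to bootstrap from the $L^2$ estimate provided by Proposition~\ref{p:greenaverage} up to all $L^q$ moments, using concentration on the product space $\un{\O}$ combined with a self-improving identity for Green function derivatives under single-edge perturbations. The case $q = 2$ is essentially already in hand; the challenge is to upgrade it to arbitrary $q$ with constants independent of the law of $\o$ beyond its product structure and the ellipticity bounds.

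First I would derive a ``vertical derivative'' formula for $\na_1 \na_2 G_\mu(0,x)$. Resampling the conductance at a single edge~$e$ changes $G_\mu^\o$ into $G_\mu^{\o'}$, and the identity \eqref{eqMO}, extended to arbitrary conductance changes, gives
\begin{equation*}
\na_1 \na_2 G_\mu^{\o'}(0,x) - \na_1 \na_2 G_\mu^\o(0,x) = \na_1 \na_2 G_\mu^{\o'}(0,\un{e}) \ \Delta_e \ \na_1 \na_2 G_\mu^\o(x,\un{e}),
\end{equation*}
where $\Delta_e$ denotes the bounded matrix difference of conductances at $e$. This factorization expresses the vertical derivative of $\na_1 \na_2 G_\mu$ as a product of two copies of the very object we are trying to estimate, evaluated at spatially separated points. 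That is the crucial self-improving feature.

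Next, since $\P$ is a product measure, it satisfies a spectral gap (and logarithmic Sobolev) inequality with respect to the vertical derivatives $\partial_e$. A Bonami--Beckner / Efron--Stein inequality then yields, for every $q \ge 2$ and every $f : \un{\O} \to \R$,
\begin{equation*}
\Ll\| f - \E[f] \Rr\|_{L^q} \le c \sqrt{q} \ \Big\| \Big( {\textstyle \sum_e} (\partial_e f)^2 \Big)^{1/2} \Big\|_{L^q}.
\end{equation*}
Applied with $f = \na_1 \na_2 G_\mu(0,x)$ and the factorization above, the right-hand side can be bounded, via Cauchy--Schwarz and Minkowski, by a weighted $\ell^2$-convolution involving $L^{q_0}$ moments of $\na_1 \na_2 G_\mu$ at separated points. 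Invoking the induction hypothesis $\|\na_1 \na_2 G_\mu(y,z)\|_{L^{q_0}} \le \C_{q_0} (1 \vee |y-z|)^{-d}$ reduces the problem to estimating the deterministic convolution $\sum_{e} (1 \vee |\un{e}|)^{-d} (1 \vee |x-\un{e}|)^{-d}$, which produces the desired decay $(1 \vee |x|)^{-d}$ up to a borderline logarithmic factor.

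The main obstacle is precisely that logarithm: it must be absorbed by exploiting Meyers' improved integrability for divergence-form operators with bounded measurable coefficients, which in the uniformly elliptic setting holds with constants depending only on $d$ and $c_\pm$. The Meyers gain produces a fixed increment $\delta > 0$ in the attainable exponent, so that one can push $q_0$ to $q_0(1+\delta)$ at each step; iterating from the base case $q_0 = 2$ reaches every finite $q$. Throughout, the delicate points are to keep all constants dependent only on $q$, $d$ and $c_\pm$ (which requires that every ingredient -- the Calderón--Zygmund-type convolution bound, the concentration inequality, and Meyers' estimate -- uses only the product structure of $\P$ and uniform ellipticity), and to track that the baseline exponential factor $e^{-\td{c}\sqrt{\mu}|x|}$ from Proposition~\ref{p:greenaverage} propagates through the bootstrap, which it does since all operations respect exponential localization.
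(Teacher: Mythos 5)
Your proposal attempts to prove the estimate from scratch, whereas the paper does not actually prove it: the statement is a citation of Marahrens--Otto \cite{marott}, and the paper's ``proof'' consists only of the remark immediately following the theorem, explaining how the argument of \cite{marott} (which is written for $\mu = 0$) adapts, with essentially routine modifications, to the regularized Green function $G_\mu$ for $\mu > 0$. So the two take genuinely different routes: the paper's route is to delegate everything to \cite{marott} and check that the $\mu$-dependence causes no new difficulty; your route is to reconstruct the argument of \cite{marott} itself. That is a more ambitious undertaking, and it is worth noting that your broad outline does capture real features of the Marahrens--Otto proof: the single-edge resampling formula for $\nabla_1\nabla_2 G$, equivalent to \eqref{relovG}; the observation that this vertical derivative factorizes into two copies of the object being estimated at spatially separated points; a concentration inequality coming from the product structure of $\P$; and a bootstrap in the moment exponent $q$.

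However, there are at least two concrete gaps. First, the Meyers step is not a valid way to kill the logarithm. The logarithm you identify comes from the spatial convolution $\sum_e (1\vee|\un e|)^{-d}(1\vee|x-\un e|)^{-d} \sim (1\vee|x|)^{-d}\log(2\vee|x|)$, which is a statement about decay in the \emph{lattice} variable. Meyers' theorem gives an increment $\delta > 0$ in the spatial \emph{integrability} exponent for gradients of solutions (from $L^2$ to $L^{2+\delta}$), with constants depending only on $d$ and $c_\pm$; it says nothing about raising the probabilistic moment exponent $q$, nor about improving the power of decay at a fixed $x$. Raising $q$ and removing the log are two separate borderline issues and Meyers addresses neither. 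In \cite{marott} the bootstrap is closed differently, by combining the $L^p$ form of the logarithmic Sobolev inequality with a careful interplay between the annealed $L^1$ bound (which is log-free, cf.~\eqref{greenaverage2}) and quenched pointwise/regularity inputs; at no point does a Meyers-type integrability gain produce the missing factor. Second, the concentration inequality you invoke, $\|f - \E f\|_{L^q} \le c\sqrt q\,\|(\sum_e (\partial_e f)^2)^{1/2}\|_{L^q}$, is the sub-Gaussian (Bonami--Beckner) form, which holds for i.i.d.\ $\pm 1$'s or Gaussians but is not the right general-purpose tool for an arbitrary product measure on $[c_-,c_+]^{\B}$. The estimate actually used in \cite{marott}, and the one whose constant the paper's remark explicitly traces, is the $L^p$ version of the spectral gap / weak log-Sobolev inequality, whose validity under the product assumption (with constants depending only on $c_\pm$) is precisely the point emphasized at the end of the paper's remark. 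As written, your sketch does not supply a workable mechanism to close the bootstrap, so it cannot be accepted as a proof; if your intent was only to explain the reduction to $\mu = 0$ and then cite \cite{marott}, then the Meyers and $\sqrt q$-concentration steps should be removed and replaced by that reduction, as the paper does.
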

\begin{rem}
Striclty speaking, \cite[Theorem~1]{marott} corresponds to this result for $\mu = 0$. The proof can however be adapted with minor modifications to yield Theorem~\ref{t:marott}. Let us describe briefly how. \cite[Lemma~4]{marott} needs no change. \cite[Lemma~6]{marott} consists of two parts. The proof of the first part can be easily adapted to yield the result with $G$ replaced by $G_\mu$ (actually, it provides an alternative route to the periodization argument used there), while the second part was proved for $G_\mu$ in the first place (that is, in \cite{glotto}). The first part of \cite[Lemma~5]{marott} is all what is needed for our purpose, and it remains true with $G$ replaced by $G_\mu$ (uniformly over $\mu$). Indeed, the formulas for the derivatives of the Green function with respect to $\omega_e$ appearing in steps 1 and 2 of the proof, as e.g. \cite[(48)-(50)]{marott}, remain valid provided $G$ is replaced by $G_\mu$ everywhere. Step 3 is a consequence of \cite[Lemma~6]{marott}, while step 4 follows from previous steps, and concludes the proof of the part of the lemma we are interested in. Finally, the proof of \cite[Theorem~1]{marott} follows from the results we just reviewed (again providing an alternative route to the periodization argument proposed there). The fact that the constant $\C_q$ in \eqref{e:marott} can be chosen as a function of $q$, $d$ and the ellipticity constants only boils down to the fact that the constant $\rho$ in the weak logarithmic Sobolev inequality defined in \cite[(2)]{marott} can be chosen as a function of $c_-$ and $c_+$ only.
\end{rem}

\begin{thm}[sharp control of the error]
\label{t:error}
Let $\ov{p} \in [0,1]$ and $a_1\pp \in \R$ be given by Theorem~\ref{t:main}. For every $\eta >0$ and as $p$ tends to $0$, 
\begin{equation}
\label{e:error}
\xi \cdot \Ah\ppp \xi = \xi \cdot \Ah\pp \xi + p \ a_1\pp + o(p^{2-\eta}).
\end{equation}
\end{thm}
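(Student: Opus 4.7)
The plan is to sharpen Theorem~\ref{t:approxcorr} using Theorem~\ref{t:marott}, replacing its error bound $c p^2 \mu^{-1+\beta} + e^{-\mu^{-\gamma}}$ by one of the form $c p^2 \log^r(\mu^{-1}) + e^{-\mu^{-\gamma'}}$, and then to re-run the proof of Theorem~\ref{t:main0} (and of Theorem~\ref{t:main}) with a much more aggressive choice $\mu = p^{1+\eps}$ with $\eps$ close to $1$ rather than close to $0$. The only wasteful step in the proof of Theorem~\ref{t:approxcorr} is the Hölder estimate \eqref{e:mcle}, where the pointwise bound \eqref{greenpoint2} was used on $|\na_1 \na_2 G\p_\mu(0, \un{e}')|$ and the isolated moment bound of Lemma~\ref{l:greenperturbed} on $|\na_1 \na_2 G^e_\mu(\un{e}', \un{e})|$. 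Theorem~\ref{t:marott} replaces both by $L^q$ bounds $\E[|\na_1 \na_2 G_\mu(x,y)|^q]^{1/q} \le \C_q / (1 \vee |y-x|)^d$ for arbitrarily large $q$, and this is what will drive the improvement.

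The decisive observation is that the constant $\C_q$ in Theorem~\ref{t:marott} is uniform across every product measure on edges with marginals in $[c_-, c_+]$. Conditioning on $\{E\p_\mu = E\}$ preserves the product structure of $\omega\p$: since $(b\p_e)$ is independent of $(\omega_e, \omega_e^{(1)})$, the conditional law of $\omega\p$ assigns $\nu^{(1)}$ to edges of $E$, $\nu^{(0)}$ to edges of $B_{\mu^{-\gamma}} \setminus E$, and the mixed law $\nu\p$ to edges outside $B_{\mu^{-\gamma}}$, all with marginals in $[c_-, c_+]$. Hence Theorem~\ref{t:marott} gives $\E[|\na_1 \na_2 G\p_\mu(0,\un e')|^q \mid E\p_\mu = E]^{1/q} \le \C_q / (1 \vee |\un e'|)^d$ uniformly in $E$, and for $\na_1 \na_2 G^e_\mu(\un e', \un e)$ one gets the analogous bound without conditioning because the environment $A^e$ is independent of the $b\p$'s. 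I would then apply Hölder's inequality to \eqref{e:mcle} with exponents $(q, q, q')$, $2/q + 1/q' = 1$ and $q$ large, bounding the remaining factor $\|\xi + \na \phi\z_\mu(\un e)\|_{q'}$ by $c\log^r(\mu^{-1})$ via Theorem~\ref{t:corr-moments} together with standard gradient moment bounds. The double sum in \eqref{e:thestep} then collapses to $p^2 \sum_{e \ne e' \in B_{\mu^{-\gamma}}} (1 \vee |\un e'|)^{-d} (1 \vee |\un e - \un e'|)^{-d} = O(p^2 \log^2(\mu^{-1}))$, yielding the announced sharpened bound on $\E[|\na \phi\p_\mu - \na \td{\phi}\p_\mu|]$.

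With this estimate in hand, I would insert it into the proof of Theorem~\ref{t:main0} with the choice $\mu = p^{1+\eps}$, $\eps = 1 - \eta/2$. Theorem~\ref{t:approx} then contributes $O(\mu \log^r(\mu^{-1})) = O(p^{2-\eta/2} |\log p|^r)$, the improved Theorem~\ref{t:approxcorr} contributes $O(p^2 |\log p|^r)$, and the $O(p^2)$ Taylor-type remainder at the origin is trivially absorbed, so all three errors are $o(p^{2-\eta})$. The one subtle point is to replace $a_1\z(\mu)$ by $a_1\z$ at cost $o(p^{2-\eta})$, which requires a polynomial rate $|a_1\z(\mu) - a_1\z| = O(\mu^{\alpha'})$ for some $\alpha' > 0$; this follows by a quantitative analysis of $\na \phi^e_\mu \to \na \phi^e$ and $D\psi_\mu \to \chi$ using Lemma~\ref{l:one}, the identity $\na_2 G^\omega(x,y) - \na_2 G^\omega_\mu(x,y) = \int_0^\infty (1-e^{-\mu t}) \na_3 q^\omega(t,x,y) \, \d t$, and the heat-kernel gradient bound \eqref{parabpointgrad}, which together give polynomial-in-$\mu$ convergence uniformly in the relevant environments.

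For general $\ov p \in [0,1]$, I would apply the coupling from the proof of Theorem~\ref{t:main} verbatim: $\omega\ppp$ is realised as a Bernoulli perturbation with parameter $\td p = O(p)$ of the base environment $\omega\pp$, whose law is itself a product measure on edges with marginals in $[c_-, c_+]$. Hence Theorem~\ref{t:marott} remains available with the same universal constant, and the sharpened Theorem~\ref{t:main0} applied to this setting yields the claimed $o(\td p^{2-\eta}) = o(p^{2-\eta})$ error. The main technical obstacle is the rigorous verification that $\C_q$ in Theorem~\ref{t:marott} is genuinely uniform over the inhomogeneous product measures arising from conditioning on $E\p_\mu$; this should be checked by revisiting the proof of \cite[Theorem~1]{marott}, whose underlying inputs (weak logarithmic Sobolev inequality, sensitivity estimates, moment bounds) depend only on the ellipticity constants and not on the specific marginals of the product law.
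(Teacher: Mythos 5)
Your proposal rests on the same key ingredient as the paper — Theorem~\ref{t:marott} — but implements the crucial moment estimate in a genuinely different way. The paper never conditions on $E\p_\mu$: in the proof of Theorem~\ref{t:sharp} it applies H\"older to the unconditional expectation $\E[\1_{e,e'\in E\p}\,XYZ]$ with exponents $(\zeta^{-1},q,q,q)$, $\zeta=1-3/q$, pulls out $\P[e,e'\in E\p]^\zeta=p^{2\zeta}$, and bounds the three remaining factors by unconditional moments; the moment bound for $G^e_\mu$ (an inhomogeneous product measure) is deliberately reduced to the homogeneous one via the perturbation identity of Lemma~\ref{l:sharpgreenperturbed}, so Theorem~\ref{t:marott} is only ever invoked for i.i.d.\ environments. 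You instead condition on $\{E\p_\mu=E\}$, observe that the conditional law of $\o\p$ is an inhomogeneous product measure with marginals in $[c_-,c_+]$, and apply Theorem~\ref{t:marott} directly to it; this yields the cleaner factor $p^2\log^r(\mu^{-1})$ in place of the paper's $p^{2-\eta}\mu^{-\eta}$ (both suffice for $o(p^{2-\eta})$). The cost of your route is precisely the uniformity of $\C_q$ over inhomogeneous product measures — which you correctly flag as the main technical obstacle, whereas the paper's design avoids needing it at all. The different choice $\mu=p^{2-\eta/2}$ vs.\ the paper's $\mu=p^2$ is inconsequential, and the reduction to $\ov{p}=0$ via the coupling from Theorem~\ref{t:main} is identical to the paper.

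On the final step — replacing $a_1\z(\mu)$ by $a_1\z$ at cost $o(p^{2-\eta})$ — you are right that a polynomial rate of convergence is needed, and that this is a genuine point (the paper's proof only refers back to the ``same way'' as Theorem~\ref{t:main0}, which merely establishes convergence). Your proposed fix via heat-kernel-based quantitative convergence is plausible for $\na\phi^e_\mu\to\na\phi^e$ (via the identity $\na_2 G^\o(x,y)-\na_2 G^\o_\mu(x,y)=\int(1-e^{-\mu t})\na_3 q^\o\,\d t$ and~\eqref{parabpointgrad}), but it is incomplete for $D\psi_\mu\to\chi$: your sketch only controls $F_\mu\to F$, while the passage from $(\mu-\L)\td\psi_\mu=D^*\cdot F$ to $-\L\chi=D^*\cdot F$ is a massive-to-massless limit that the paper proves via the spectral argument of Theorem~\ref{t:station}, which as stated gives convergence but not a rate; one would need a quantitative version (e.g.\ as in \cite{glotto2}) or a self-improvement argument (playing the identity $\xi\cdot\Ah\p\xi=\xi\cdot\Ah\z\xi+p\,a_1\z(\mu)+O(\mu\log^r(\mu^{-1})+p^{2-\eta}\mu^{-\eta}+p^2)$ against itself at $\mu$ and $\mu/2$ with $p^2=\mu$ and telescoping) to close this step.
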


As was said before, it suffices to prove Theorem~\ref{t:error} for $\ov{p} = 0$, and the key step for doing so is to prove the following result.

\begin{thm}[sharp linear approximation of the corrector] 
\label{t:sharp}
For every $\eta > 0$, there exist constants $c$, $\gamma > 0$ such that for every $\mu \in (0,1/2]$ and $p \in [0,1]$,
$$
\E\Ll[\Ll|\na \phi\p_\mu - \na \td{\phi}\p_\mu \Rr|\Rr] \le c p^{2-\eta} \mu^{-\eta} + e^{-\mu^{-\gamma}}.
$$
\end{thm}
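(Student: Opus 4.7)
The plan is to revisit the proof of Theorem~\ref{t:approxcorr}, following exactly the same strategy but replacing the crude pointwise estimate \eqref{greenpoint2} on $|\na_1\na_2 G_\mu\p(0,\un{e}')|$ by the high-moment averaged bound of Theorem~\ref{t:marott}. All the reductions from the proof of Theorem~\ref{t:approxcorr}---the decomposition of $\na\ov{\phi}_\mu\p(0)$ via Lemma~\ref{l:several}, the bound on $|\na\ov{\phi}_\mu^e|$ via Lemma~\ref{l:one}, the localization to $E_\mu\p = E\p \cap B_{\mu^{-\gamma}}$ up to an $e^{-\mu^{-\gamma'}}$ error, and the decomposition \eqref{e:thestep} over $\{E\p_\mu = E\}$---carry over verbatim. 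Only the estimate of $\mcl{E}(e,e',E)$ needs to be sharpened.

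The crucial observation is that conditioning on $\{E\p_\mu = E\}$ preserves the product-measure structure of $\o\p$ on $[c_-,c_+]^\B$: namely, $\o\p_e = \o_e^{(1)}$ for $e \in E$, $\o\p_e = \o_e$ for $e \in B_{\mu^{-\gamma}} \setminus E$, and $\o\p_e$ has law $\nu\p$ for $e \notin B_{\mu^{-\gamma}}$. Since the constant $\C_q$ in Theorem~\ref{t:marott} depends only on $q$, $d$ and the ellipticity constants (not on the specific product law), the estimate \eqref{e:marott} applies uniformly under this conditioning. This is the main conceptual point of the argument.

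Fix $q > 2$ and apply H\"older's inequality to $\mcl{E}(e,e',E)$ with exponents $q$, $q$, $q/(q-2)$ to obtain $\mcl{E}(e,e',E) \le \mcl{A}_1 \cdot \mcl{A}_2 \cdot \mcl{A}_3$, where
\begin{equation*}
\mcl{A}_1 = \E\Ll[|\na_1\na_2 G_\mu\p(0,\un{e}')|^q \ \big|\ E\p_\mu = E\Rr]^{1/q},
\end{equation*}
\begin{equation*}
\mcl{A}_2 = \E\Ll[|\na_1\na_2 G_\mu^e(\un{e}',\un{e})|^q \ \big|\ E\p_\mu = E\Rr]^{1/q},
\end{equation*}
\begin{equation*}
\mcl{A}_3 = \E\Ll[|\xi + \na \phi\z_\mu(\un{e})|^{q/(q-2)} \ \big|\ E\p_\mu = E\Rr]^{(q-2)/q}.
\end{equation*}
By the observation above and Theorem~\ref{t:marott}, $\mcl{A}_1 \le \C_q/(1\vee|\un{e}'|)^d$. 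The quantity $G_\mu^e$ depends only on $(\omega,\omega^{(1)})$ and is independent of the Bernoulli family, so the conditioning in $\mcl{A}_2$ is vacuous; repeating the two-step argument of Lemma~\ref{l:greenperturbed}---write $G_\mu^e = G_\mu\z + \ov{G}_\mu^e$, use Theorem~\ref{t:marott} for the $G_\mu\z$ contribution, and the identity \eqref{relovG} together with the pointwise bound \eqref{greenpoint2} on $|\na_1\na_2 G_\mu^e(\un{e},\un{e})|$ for the remainder---yields $\mcl{A}_2 \le \C_q'/(1\vee|\un{e}-\un{e}'|)^d$. Finally $\mcl{A}_3$ is also conditioning-free and bounded by $c\log^r(\mu^{-1})$ via Theorem~\ref{t:corr-moments}.

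Summing over $E$, and using that for $e \neq e'$ the Bernoulli variables $b\p_e$ and $b\p_{e'}$ are independent,
\begin{equation*}
\sum_E \P[E\p_\mu = E] \sum_{e \neq e' \in E} \mcl{E}(e,e',E) \le c \log^r(\mu^{-1}) \ p^2 \sum_{e \neq e' \in B_{\mu^{-\gamma}}} \frac{1}{(1\vee|\un{e}'|)^d \ (1\vee|\un{e}-\un{e}'|)^d}.
\end{equation*}
An elementary two-fold lattice sum estimate (summing first in $\un{e}$ for fixed $\un{e}'$, then in $\un{e}'$, each giving a $\log \mu^{-1}$ factor in any dimension $d \ge 2$) shows that the remaining double sum is $O(\log^2 \mu^{-1})$. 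Combining, $\E[|\na\ov{\phi}_\mu\p|] \le C p^2 \log^{r+2}(\mu^{-1}) + e^{-\mu^{-\gamma'}}$, and for every $\eta > 0$ this is majorized by $C_\eta \ p^{2-\eta} \mu^{-\eta} + e^{-\mu^{-\gamma'}}$, using $p^2 \le p^{2-\eta}$ for $p \in [0,1]$ and $\log^{r+2}(\mu^{-1}) \le C_\eta \mu^{-\eta}$ for $\mu \in (0,1/2]$. No further obstacle arises; the only technical care needed is to verify the preservation of the product structure under conditioning, which is immediate from the independence of the Bernoullis from $(\omega,\omega^{(1)})$.
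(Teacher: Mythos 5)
Your proof is correct but takes a genuinely different route from the paper. The paper sidesteps the conditioning on $\{E\p_\mu = E\}$ entirely: starting from the expectation in \eqref{e:thestep}, it rewrites the sum with the indicator $\1_{e,e'\in E\p}$ pulled inside and applies a single H\"older inequality with four exponents $(\zeta^{-1},q,q,q)$, $\zeta = 1-3/q$, $q > 3$. This produces the factor $\P[e,e'\in E\p]^{\zeta} = p^{2\zeta}$ directly, and the remaining three factors are $q$-th moments with respect to the \emph{unconditional} law $\P$, for which Theorem~\ref{t:marott} and Lemma~\ref{l:sharpgreenperturbed} apply immediately since $\o\p$ is i.i.d.\ for each fixed $p$. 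One then picks $q$ large so that $2\zeta = 2 - 6/q$ is as close to $2$ as desired. Your argument instead keeps the conditional decomposition of \eqref{e:thestep}, and the key technical point you need is that the conditional law of $\o\p$ given $\{E\p_\mu = E\}$ is still a product measure with marginals supported on $[c_-,c_+]$ --- so that Theorem~\ref{t:marott} applies under the conditional expectation with the same constant $\C_q$. This is indeed true (the remark following Theorem~\ref{t:marott} indicates that $\C_q$ depends only on the log-Sobolev constant, which is uniform over such non-i.i.d.\ products), but it requires a mild extension of Theorem~\ref{t:marott} beyond the i.i.d.\ setting as stated. Your route buys a slightly sharper $p^2 \log^{r+2}(\mu^{-1})$ bound (versus $p^{2\zeta}\mu^{-\gamma\eta}$), while the paper's buys freedom from having to justify the conditional applicability of \cite{marott}. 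Both are valid, and in either case the final estimate $c p^{2-\eta}\mu^{-\eta} + e^{-\mu^{-\gamma}}$ follows.
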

We start with a result whose purpose is to replace Lemma~\ref{l:greenperturbed}.
\begin{lem}[sharp averaged control of the perturbed Green function]
There exists a finite constant $c$ (depending only on the ellipticity constants and on the dimension) such that for every $q \ge 1$, $\mu \in (0,1/2]$ and $e,e' \in \B$,
$$
\E\Ll[ | \na_1 \na_2 G^e_\mu(\un{e}',\un{e})|^{q}  \Rr]^{1/q} \le c \   \frac{\C_q}{(1\vee |\un{e}-\un{e}'|)^d},
$$
where $\C_q$ is the constant of Theorem~\ref{t:marott}.
\label{l:sharpgreenperturbed}
\end{lem}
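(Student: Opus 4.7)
The plan is to mimic the two-step structure of the proof of Lemma~\ref{l:greenperturbed}, but to replace the crude pointwise bound \eqref{greenpoint2} used in the first step of that proof with the much sharper moment bound provided by Theorem~\ref{t:marott}. As before, I would split $G^e_\mu = G\z_\mu + \ov{G}^e_\mu$ with $\ov{G}^e_\mu = G^e_\mu - G\z_\mu$, and bound the two pieces separately, the final result following by the triangle inequality.

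For the contribution of $G\z_\mu$: the unperturbed environment $\o\z = \o$ is an i.i.d.\ (hence product) measure, so Theorem~\ref{t:marott} applies to $G\z_\mu$ and, combined with the translation invariance of the law of $\o$, gives immediately
$$
\E\Ll[|\na_1 \na_2 G\z_\mu(\un{e}',\un{e})|^q\Rr]^{1/q} \le \frac{\C_q}{(1 \vee |\un{e}-\un{e}'|)^d}.
$$

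For the contribution of $\ov{G}^e_\mu$, I would invoke the explicit representation \eqref{relovG}, which at $(\un{e}',\un{e})$ reads
$$
\na_1 \na_2 \ov{G}^e_\mu(\un{e}',\un{e}) = C^e(\un{e}) \Ll(\na_1 \na_2 G^e_\mu(\un{e},\un{e})\Rr) \Ll(\na_1 \na_2 G\z_\mu(\un{e}',\un{e})\Rr).
$$
The key point is that the matrix $C^e(\un{e}) \na_1 \na_2 G^e_\mu(\un{e},\un{e})$ is bounded by a deterministic constant depending only on the dimension and the ellipticity constants. Indeed, by \eqref{greenpoint2}, the quantity $|\na_i G^\o_\mu(x,y)|$ is bounded uniformly in $\o$, $x$, $y$ and $\mu$; a triangle inequality then bounds the mixed second difference $|\na_1 \na_2 G^e_\mu(\un{e},\un{e})|$, and $|C^e(\un{e})| \le c_+ - c_-$. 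Taking $L^q$ norms thus gives
$$
\E\Ll[|\na_1 \na_2 \ov{G}^e_\mu(\un{e}',\un{e})|^q\Rr]^{1/q} \le c \, \E\Ll[|\na_1 \na_2 G\z_\mu(\un{e}',\un{e})|^q\Rr]^{1/q} \le \frac{c \, \C_q}{(1 \vee |\un{e}-\un{e}'|)^d},
$$
using the preceding step for the second inequality. The triangle inequality applied to $G^e_\mu = G\z_\mu + \ov{G}^e_\mu$ concludes the proof.

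There is no real obstacle here: once Theorem~\ref{t:marott} is available, the argument is a direct plug-in into the earlier reasoning, the only subtlety being the verification that Theorem~\ref{t:marott}'s hypothesis (a product law for the environment) is satisfied by $\o\z$, which is immediate.
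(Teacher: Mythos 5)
Your proposal is correct and follows essentially the same route as the paper: split $G^e_\mu = G\z_\mu + \ov{G}^e_\mu$, apply Theorem~\ref{t:marott} directly to the unperturbed part, and then use the product formula \eqref{relovG} together with the deterministic pointwise bound \eqref{greenpoint2} to reduce the perturbed difference to the same estimate. The observation that the environment law is a product measure (so Theorem~\ref{t:marott} applies) is implicit in the paper but is correct to make explicit.
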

\begin{proof}
Recall from the proof of Lemma~\ref{l:greenperturbed} that  $\ov{G}^e_\mu = G^e_\mu - G\z_\mu$. Since by Theorem~\ref{t:marott},
\begin{equation}
\label{e:sharpgreen}
\E\Ll[ | \na_1 \na_2 G\z_\mu(\un{e}',\un{e})|^{q}  \Rr]^{1/q} \le \frac{\C_q}{(1\vee |\un{e}-\un{e}'|)^d},
\end{equation}
it suffices to prove that
$$
\E\Ll[ | \na_1 \na_2 \ov{G}^e_\mu(\un{e}',\un{e})|^{q}  \Rr]^{1/q} \le c \ \frac{\C_q}{(1\vee |\un{e}-\un{e}'|)^d}.
$$
Recall from \eqref{relovG} that
$$
\na_1 \na_2\ov{G}^e_\mu(\un{e}',\un{e}) = C^e(\un{e}) \Ll(\na_1 \na_2 G^e_\mu(\un{e},\un{e})\Rr) \ \Ll(\na_1 \na_2G\z_\mu(\un{e}',\un{e})\Rr).
$$
Since, by \eqref{greenpoint2},
$$
\Ll|  C^e(\un{e}) \na_1 \na_2 G^e_\mu(\un{e},\un{e}) \Rr|
$$
is bounded uniformly over $\mu$ and $\un{\o}$, the result follows from \eqref{e:sharpgreen}.
\end{proof}
\begin{proof}[Proof of Theorem~\ref{t:sharp}]
For the beginning of the proof, we can follow the same reasoning as in the proof of Theorem~\ref{t:approxcorr}, up to the point when we arrive at the estimation of the left-hand side of \eqref{e:thestep}, that is,
$$
\E\Ll[ \sum_{e\neq e' \in  E\p_\mu}  | \na_1 \na_2 G\p_\mu(0,\un{e}')| \ | \na_1 \na_2 G^e_\mu(\un{e}',\un{e})| \ |\xi + \na \phi\z_\mu(\un{e})| \Rr],
$$
where we recall that $E_\mu\p = E\p \cap B_{\mu^{-\gamma}}$, and $\gamma > 1/2$. We rewrite slightly this expectation as
\begin{equation}
\label{err1}
\sum_{e \neq e' \in B_{\mu^{-\gamma}}} \E\Ll[  \1_{e,e' \in E\p} | \na_1 \na_2 G\p_\mu(0,\un{e}')| \ | \na_1 \na_2 G^e_\mu(\un{e}',\un{e})| \ |\xi + \na \phi\z_\mu(\un{e})| \Rr].
\end{equation}
Let $q > 3$ and $\zeta=1-3/q$. We apply Hölder's inequality with exponents $(\zeta^{-1},q,q,q)$ to bound each summand in \eqref{err1} by
\begin{multline}
\label{err2}
\P[e,e' \in E\p]^{\zeta} \ \E\Ll[ | \na_1 \na_2 G\p_\mu(0,\un{e}')|^{q}\Rr]^{1/q} \\
\E\Ll[ \ | \na_1 \na_2 G^e_\mu(\un{e}',\un{e})|^{q}\Rr]^{1/q} \ \E\Ll[ |\xi + \na \phi\z_\mu(\un{e})|^{q}\Rr]^{1/q}.
\end{multline}
Since we only consider summands for which $e \neq e'$, we have
$$
\P[e,e' \in E\p]^{\zeta} = p^{2 \zeta}.
$$
By Theorem~\ref{t:marott} and Lemma~\ref{l:sharpgreenperturbed}, the product of the second and third terms in \eqref{err2} is bounded by
$$
c \ \frac{(\C_q)^2}{(1 \vee |\un{e}'|)^d \ (1 \vee |\un{e} - \un{e}'|)^d},
$$
while, by Theorem~\ref{t:approx}, the last term in \eqref{err2} is bounded by some power of $\log(\mu^{-1})$, say $\log^r(\mu^{-1})$. We thus obtain that the sum in \eqref{err1} is bounded by
$$
c \ p^{2\zeta} \log^r(\mu^{-1})  \sum_{e \neq e' \in B_{\mu^{-\gamma}}} \frac{(\C_q)^2}{(1 \vee |\un{e}'|)^d \ (1 \vee |\un{e} - \un{e}'|)^d}.
$$
By comparing this sum to an integral, we get that the whole expression above is bounded, up to a constant, by 
$p^{2\zeta} \mu^{-\gamma \eta}$,
for any $\eta > 0$. Recalling that $\zeta = 1-3/q$ and that $q$ can be chosen arbitrarily large, this concludes the proof.
\end{proof}
\begin{proof}[Proof of Theorem~\ref{t:error}] As noted before, it suffices to prove the theorem for $\ov{p} = 0$. The proof follows closely that of Theorem~\ref{t:main0}. Let $\eta > 0$. Instead of \eqref{fixmu}, we fix $\mu = p^2$. With this choice, Theorem~\ref{t:approx} implies that the left-hand side of \eqref{e:approx} is $O(p^{2-\eta})$, while by Theorem~\ref{t:sharp}, the left-hand side of \eqref{e:approx2} is $O(p^{2-3\eta})$. We can then follow the proof of Theorem~\ref{t:main0} without any change until \eqref{e:presque}, whose error term is now $O(p^{2-3\eta})$ instead of $o(p)$. The proof is then concluded in the same way, noting that Theorem~\ref{t:approx} ensures that $\E[\xi \cdot A\z(\xi + \na {\phi}_\mu\z)] = \xi \cdot \Ah\z \xi + O(p^{2-\eta})$.
\end{proof}

\bigskip

\noindent \textbf{Acknowledgements.} I would like to warmly thank Vincent Beffara, Noam Berger, Antoine Gloria, Claude Le Bris and Frédéric Legoll for stimulating discussions about this problem.

\end{document}